\newcommand{\defi}[1]{\textsf{#1}}
\newtheorem{theorem}{Theorem}[section]
\newtheorem{lemma}[theorem]{Lemma}
\newtheorem{prop}[theorem]{Proposition}
\newtheorem{cor}[theorem]{Corollary}
\newtheorem{conjecture}[theorem]{Conjecture}
\theoremstyle{definition}
\newtheorem{definition}[theorem]{Definition}
\theoremstyle{remark}
\newtheorem{remark}[theorem]{Remark}
         \newcommand{\cO}{\mathcal O}
\newcommand{\cC}{\mathcal C}
         \newcommand{\cS}{\mathcal S}
\newcommand{\cG}{\mathcal G}
\newcommand{\cM}{\mathcal M}
\newcommand{\bP}{\mathbb P}
\newcommand{\bZ}{\mathbb Z}
\newcommand{\Pic}{{\rm Pic}}
\newcommand{\Cox}{{\rm Cox}}
\newcommand{\Tor}{{\rm Tor}}
\newcommand{\free}{sweeping}
\newcommand{\HX}[2]{{\rm H}^{#1} \big( X, \cO_X(#2) \big)}
\newcommand{\hX}[2]{h^{#1} \big( X, \cO_X(#2) \big)}
\begin{document}

\title
{Cox Rings of degree one del Pezzo surfaces}
\subjclass[2000]{Primary 14J26. 
}
\thanks{The first author was partially supported by Jacobs University Bremen, DFG grant STO-299/4-1 and EPSRC grant number EP/F060661/1; the third author is partially supported by NSF grant DMS-0802851.}
\keywords{Cox rings, total coordinate rings, del Pezzo surfaces}
\author{Damiano Testa}
\author{Anthony V\'arilly-Alvarado}
\author{Mauricio Velasco}

\address{Mathematical Institute, 24-29 St Giles, Oxford OX1 3LB, United Kingdom}
\email{adomani@gmail.com}

\address{Department of Mathematics, Rice University, MS 136, Houston, Texas 77005, USA}
\email{varilly@rice.edu}
\urladdr{http://math.rice.edu/~av15}

\address{Department of Mathematics, University of California, 
	Berkeley, CA 94720, USA}
\email{velasco@math.berkeley.edu}
\urladdr{http://math.berkeley.edu/\~{}velasco}
\date{\today}

\begin{abstract}
Let $X$ be a del Pezzo surface of degree one over an algebraically closed field, and let $\Cox(X)$ be its total coordinate ring. We prove the missing case of a conjecture of Batyrev and Popov, which states that $\Cox(X)$ is a quadratic algebra.  We use a complex of vector spaces whose homology determines part of the structure of the minimal free $\Pic(X)$-graded resolution of $\Cox(X)$ over a polynomial ring.  We show that sufficiently many Betti numbers of this minimal free resolution vanish to establish the conjecture. 
\end{abstract}

\maketitle

\section{Introduction}

Let $k$ be an algebraically closed field and let $X$ be a smooth projective integral scheme over $k$. Assume that the Picard group ${\rm Pic}(X)$ is freely generated by the classes of divisors $D_0, D_1,\ldots, D_r$.  The \defi{total coordinate ring}, or \defi{Cox ring} of $X$ with respect to this basis is given by
\[
\Cox(X) := \bigoplus_{(m_0,\ldots,m_r)\in\bZ^{r+1}}\HX{0}{m_0 D_0+\dots+m_r D_r},
\]
with multiplication induced by multiplication of functions in $k(X)$. Different choices of bases yield (noncanonically) isomorphic Cox rings.

The first appearance of Cox rings was in the context of toric varieties in~\cite{Cox}.  In that paper Cox proves that if $X$ is a toric variety then its total coordinate ring is a finitely generated 
multigraded polynomial ring, and that $X$ can be recovered as a quotient of an open subset of ${\rm Spec(Cox}(X))$ by the action of a torus. 

Cox rings are finitely generated $k$-algebras in several other cases, including del Pezzo surfaces ~\cite{BP}, 
rational surfaces with big anticanonical divisor~\cite{big}, blow-ups of $\mathbb{P}^n$ at points lying on a rational normal curve~\cite{CT} and wonderful varieties~\cite{BRION}.  All these varieties are examples of {\it Mori dream spaces}~\cite{HK}, and for this class the Cox ring of $X$ captures much of the birational geometry of the variety. For example, the effective and nef cones of $X$ are finitely generated polyhedral cones and there are only finitely many varieties isomorphic to $X$ in codimension one, satisfying certain mild restrictions.  
To add to the long list of consequences of the landmark paper~\cite{BCHM}, we note that log Fano varieties are also Mori dream spaces.

Colliot-Th\'el\`ene and Sansuc introduced universal torsors to aid the study of the Hasse principle and weak approximation on an algebraic variety $X$ over a number field \cites{CTSI, CTSII}; see also~\cite{CTSSD}.  If the Cox ring of $X$ is finitely generated, then the universal torsor of $X$ is an open subset of ${\rm Spec}(\Cox(X))$, an affine variety for which explicit presentations have been calculated in many cases~\cites{HT, Hassett}. Starting with Salberger in~\cite{Sal98}, universal torsors have been successfully applied to the problem of counting points of bounded height on many classes of varieties. The expository article~\cite{Peyre} has a very readable account of the ideas involved.  The explicit descriptions of universal torsors via Cox rings have led to explicit examples of generalized del Pezzo surfaces that satisfy Manin's conjectures for points of bounded height~\cites{dlBB,dlBBD}.

In~\cite{BP}, Batyrev and Popov systematically study the Cox rings of del Pezzo surfaces $X/k$. They show that $\Cox(X)$ is a finitely generated $k$-algebra; moreover, if $\deg(X) \leq 6$ then $\Cox(X)$ is generated by sections whose classes have anticanonical degree one~\cite{BP}*{Theorem~3.2}. Let $\cG$ denote a minimal set of homogeneous generators of $\Cox (X)$, and denote by $k[\cG]$ the polynomial ring whose variables are indexed by the elements of $\cG$.  As a result, $\Cox (X)$ is a quotient of $k[\cG]$, 
\[ \Cox(X) \cong k[\cG]/I_X. \]

Batyrev and Popov provided a conjectural description of the generators of $I_X$.

\begin{conjecture}[Batyrev and Popov]
\label{C:BP}
Let X be a del Pezzo surface. The ideal $I_X$ is generated by quadrics.
\end{conjecture}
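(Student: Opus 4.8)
The plan is to recast the conjecture as the vanishing of graded Betti numbers and then to verify that vanishing, concentrating on degree one, since the cases $\deg(X)\geq 2$ are already known (this is the ``missing case''). By \cite{BP}*{Theorem~3.2} the ring $\Cox(X)=k[\cG]/I_X$ is generated by classes of anticanonical degree one, so $I_X$ has no linear part and the conjecture for $X$ is equivalent to the statement that $I_X$ has no minimal generator of anticanonical degree three or higher. In terms of the minimal free $\Pic(X)$-graded resolution of $\Cox(X)$ over $S:=k[\cG]$, this is precisely
\[
\Tor_1^{S}\big(\Cox(X),k\big)_D = 0 \qquad\text{for every } D\in\Pic(X) \text{ with } D\cdot(-K_X)\geq 3 .
\]
Equivalently, writing $\beta_{1,D}=\dim_k\Tor_1^S(\Cox(X),k)_D$, I must show $\beta_{1,D}=0$ in all anticanonical degrees $\geq 3$.

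To get hold of these numbers I would resolve $k$ by the Koszul complex on the variables of $S$ and tensor with $\Cox(X)$. Since $\Cox(X)_{D'}=\HX{0}{D'}$ by definition, the degree-$D$ strand of the resulting complex is a finite complex of finite-dimensional $k$-vector spaces,
\[
\cdots \longrightarrow \bigoplus_{\substack{T\subseteq\cG\\ |T|=p}} \HX{0}{D-\textstyle\sum_{E\in T}E} \longrightarrow \cdots \longrightarrow \bigoplus_{E\in\cG}\HX{0}{D-E} \longrightarrow \HX{0}{D}\longrightarrow 0 ,
\]
with differentials induced by multiplying with the distinguished sections that cut out the generating divisors; its homology in homological degree $p$ is $\Tor_p^S(\Cox(X),k)_D$. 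This is the complex of vector spaces promised in the abstract, and the theorem reduces to showing it is exact at the spot $p=1$ whenever $D\cdot(-K_X)\geq 3$; concretely, that every relation $\sum_E s_E\cdot x_E=0$ with $s_E\in\HX{0}{D-E}$ is a combination of the evident Koszul relations.

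Two structural features should make this manageable. First, the Weyl group $W(E_8)$ acts on $\Pic(X)$ fixing $-K_X$ and preserving the intersection form, permuting the $240$ lines transitively and fixing the two generators coming from the pencil $|-K_X|$; the whole construction is $W(E_8)$-equivariant, so $\beta_{1,D}$ depends only on the orbit of $D$ and it suffices to treat one representative per orbit in each anticanonical degree. Second, a regularity argument should confine the problem to bounded anticanonical degree: once the surjectivity of the multiplication maps $\HX{0}{A}\otimes\HX{0}{B}\to\HX{0}{A+B}$ is established for the relevant nef classes $A,B$, quadratic generation in a finite initial range of degrees propagates to all higher degrees, leaving only finitely many orbits to be inspected directly.

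The main obstacle, and the part genuinely special to degree one, is the exactness at the spot $p=1$ in the lowest critical degrees, beginning with anticanonical degree three. The difficulty is caused by the two extra generators arising from the anticanonical pencil, which are present only when $\deg(X)=1$: they enlarge both $\cG$ and the supply of quadrics, and they entangle the differentials of the strand far more than in the line-only situation of higher degree. I expect the resolution to require showing directly that the second Koszul differential surjects onto the cycles of the first in these degrees --- a concrete but delicate rank computation on explicit maps of the spaces $\HX{0}{\,\cdot\,}$, organized by $W(E_8)$-orbit and controlled numerically by the Euler characteristics of the strands, which are computable from the dimensions $\hX{0}{D'}$ via Riemann--Roch. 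Establishing that \emph{enough} of the Betti numbers vanish to force $\beta_{1,D}=0$ throughout this critical range is the crux of the argument.
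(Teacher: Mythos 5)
Your opening reduction is exactly the paper's: since $\Cox(X)$ is generated in anticanonical degree one, quadratic generation of $I_X$ is equivalent to $b_{1,D}(\Cox(X))=0$ for all $D$ with $-K_X\cdot D\geq 3$, and this Betti number is computed as the homology at the spot $p=1$ of the degree-$D$ strand of $\Cox(X)\otimes_R\mathbb{K}$. From that point on, however, your plan diverges from the paper and contains genuine gaps. First, the claim that surjectivity of the multiplication maps $\HX{0}{A}\otimes\HX{0}{B}\to\HX{0}{A+B}$ lets quadratic generation ``propagate'' from a finite initial range to all higher degrees is not a routine fact: surjectivity controls generation of the ring in degree one, not generation of the ideal of relations in degree two, and the propagation you want is essentially the statement to be proved. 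The paper does not reduce to finitely many cases at all; it handles the infinite families $-(n+k)K_X+N$ ($N$ nef, $n\geq 0$) uniformly, via ``capture moves'' whose validity rests on Kawamata--Viehweg vanishing of $\HX{1}{D-A-B-C}$ for suitable disjoint exceptional curves $A,B$ (Lemma~\ref{captureC}), together with the unique-factorization property of $\Cox(X)$ to dispose of cycles supported on at most two generators (Lemma~\ref{lem:easy}). Second, your proposal to control the critical low degrees ``numerically by the Euler characteristics of the strands'' cannot work as stated: the Euler characteristic only gives the alternating sum $\sum_i(-1)^ib_{i,D}$, so it cannot isolate $b_{1,D}$ without independent control of $b_{2,D}$ and higher, which you do not have.

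Third, and most specific to degree one, you correctly identify the two generators $k_1,k_2$ from the anticanonical pencil as the main obstacle but offer no mechanism for dealing with them. The paper's mechanism is Lemma~\ref{captureK}, which removes $k_1,k_2$ from the support of a cycle using the fact that $\HX{0}{D+K_X}$ is spanned by sections supported on exceptional curves; this in turn requires Proposition~\ref{tritangenti} (any $113$ of the $120$ tritangent planes span $|{-2K_X}|$), a nontrivial geometric input involving Gauss maps and an analysis of inseparability, which also repairs a gap in the original Batyrev--Popov argument and forces the characteristic-two caveat in Theorem~\ref{T:main theorem}. Finally, the anticanonical-degree-three ample classes $-3K_X$, $-2K_X+E$, $-K_X+Q$ are not handled in the paper by the Koszul-strand machinery at all, but by explicit monomial classifications and dimension counts in $k[\cG]/J_X$ (Lemmas~\ref{tretre}--\ref{kaco}), and the non-ample classes require an induction on the degree of the surface (Lemmas~\ref{tirati} and~\ref{negati}). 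Your proposal names none of these ingredients, so as it stands it is a correct reformulation followed by a plan whose two load-bearing steps (propagation to high degrees, Euler-characteristic bookkeeping) would fail.
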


Quadratic relations  have a clear geometric meaning: linear systems associated to degree two nef divisors on $X$ have many more reducible elements than their dimension; thus there are linear 
dependence relations among degree two monomials in $k[\cG]$.  All quadratic relations 
arise in this way (see \S\ref{ulti}).

\begin{remark}
Del Pezzo surfaces of degree at least six are toric varieties and hence their Cox rings are polynomial rings. The Cox ring of a del Pezzo surface of degree five is the homogeneous coordinate ring of the Grassmannian $Gr(2,5)$~\cite{BP}.
\end{remark}

Several partial results are known about this conjecture: Stillman, together with the first and third authors established it for del Pezzo surfaces of degree four, and general del Pezzo surfaces of degree three in~\cite{STV}. The third author and Antonio Laface gave a proof of the conjecture in~\cite{LV} for surfaces of degree at least two.  Recently, Serganova and Skorobogatov established Conjecture~\ref{C:BP} up to radical for surfaces of degree at least two, using representation theoretic methods, in~\cites{SS,SS2} (see also \cite{Derenthal} for related work).   In all cases, the conjecture for degree one surfaces eluded proof. The purpose of this paper is to fill in this gap for surfaces defined over a field of characteristic different from two, as well as for certain general surfaces in characteristic two that we call sweeping (see Definition~\ref{sweeping}).

\begin{theorem}
\label{T:main theorem}
Let $X$ be a del Pezzo surface of degree one; if the characteristic of $k$ is two, then assume that $X$ is {\free}. The ideal $I_X$ is generated by quadrics.
\end{theorem}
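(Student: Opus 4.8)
The plan is to recast quadratic generation as the vanishing of Betti numbers of a $\Pic(X)$-graded free resolution, to organize the resulting computation by the Weyl-group symmetry, and to reduce it to a finite list of multidegrees where exactness is checked directly.

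First I would translate the statement homologically. Writing $S = k[\cG]$ and $R = \Cox(X) = S/I_X$, both $\Pic(X)$-graded, the minimal generators of $I_X$ in multidegree $\mathbf{d}$ are counted by the Betti number
\[
\beta_{1,\mathbf{d}} = \dim_k \Tor_1^S(R,k)_{\mathbf{d}}.
\]
Since every element of $\cG$ has anticanonical degree one, $I_X$ is generated by quadrics precisely when $\beta_{1,\mathbf{d}} = 0$ for every $\mathbf{d}$ with $-K_X \cdot \mathbf{d} \geq 3$. To reach these numbers I would use the Koszul complex on the generators: with $V = S_1 = \bigoplus_{g \in \cG} k\, x_g$ the $\Pic(X)$-graded span of the variables, the complex
\[
\cdots \longrightarrow \wedge^2 V \otimes_k R \longrightarrow V \otimes_k R \longrightarrow R \longrightarrow 0,
\]
read off in a fixed multidegree $\mathbf{d}$ and henceforth denoted $\cK_\bullet(\mathbf{d})$, is a finite complex of finite-dimensional vector spaces whose $i$-th term is $\bigoplus_{|T|=i}R_{\mathbf{d}-\deg T}$ with $R_{\mathbf{e}} = \HX{0}{\mathbf{e}}$, and whose homology computes $\Tor_\bullet^S(R,k)_{\mathbf{d}}$. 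This is the complex of vector spaces advertised in the abstract; its homology at the $\Tor_1$ spot has dimension $\beta_{1,\mathbf{d}}$.

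The second step is to cut the problem down to finitely many multidegrees. The Weyl group $W(\mathrm{E}_8)$ acts on $\Pic(X)$ fixing $K_X$ and permuting the classes of the $240$ exceptional generators, so it acts on the whole complex $\cK_\bullet(\mathbf{d})$ and $\beta_{1,\mathbf{d}}$ depends only on the $W(\mathrm{E}_8)$-orbit of $\mathbf{d}$. I would then prove a degree bound: a multigraded Castelnuovo--Mumford regularity estimate, fed by Kawamata--Viehweg vanishing (so that $h^1$ and $h^2$ of the relevant line bundles vanish and $\dim_k R_{\mathbf{e}}$ is given by Riemann--Roch), shows that $\cK_\bullet(\mathbf{d})$ is already exact at the $\Tor_1$ position once $-K_X \cdot \mathbf{d}$ is sufficiently large. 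Together these reductions leave only the finitely many orbits of nef classes of small anticanonical degree, the critical case being $-K_X \cdot \mathbf{d} = 3$, to be treated by hand.

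For those remaining degrees the task is to show that the middle differential of $\cK_\bullet(\mathbf{d})$ has maximal rank, i.e.\ that enough products of generators are linearly independent to fill $R_{\mathbf{d}}$ modulo the image of the quadratic Koszul relations. Here I would use the geometric reading recorded before the theorem: the quadratic relations all come from reducible members of the degree-two nef linear systems, and I would exhibit explicit monomials in the generators realizing these identities, closing the count with Riemann--Roch. I expect the main obstacle to sit exactly here in characteristic two. The anticanonical map of a degree-one del Pezzo surface is a double cover (a sextic in $\bP(1,1,2,3)$), and in characteristic two this cover degenerates, so the naive dimension counts and rank estimates can fail. The \free{} hypothesis is what repairs this: it supplies a pencil of sections sweeping out the surface, and hence the independent products needed to witness that the degree-three Koszul differential is surjective. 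Establishing this surjectivity uniformly across all orbits, and isolating precisely the characteristic-two configurations where it breaks down without the \free{} assumption, is the crux of the argument.
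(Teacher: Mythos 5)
Your homological translation is exactly the one the paper uses in \S\ref{S:betti}: quadratic generation is equivalent to $b_{1,D}(\Cox(X))=0$ for all $D$ with $-K_X\cdot D\geq 3$, computed as the middle homology of the degree-$D$ strand of the Koszul complex on $\cG$. The difficulty, however, lies entirely in the two reductions you then assert, and neither survives scrutiny. First, $W(\mathrm{E}_8)$ acts on $\Pic(X)$ but not on $X$, on $\Cox(X)$, or on the complex $\cK_\bullet(\mathbf{d})$: the differentials are multiplication by actual sections, so there is no a priori reason that $b_{1,\mathbf{d}}$ is constant on Weyl orbits. What is true, and what the paper exploits, is that its arguments are uniform over all divisors of a given combinatorial type (``any conic $Q$'', ``any exceptional curve $E$''), which achieves the same economy without the unproved invariance. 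Second, and more seriously, the claimed regularity bound --- exactness at $\Tor_1$ once $-K_X\cdot\mathbf{d}$ is large --- is essentially the whole theorem, and Kawamata--Viehweg does not deliver it: the twists $\mathbf{d}-\deg T$ occurring in $\cK_\bullet(\mathbf{d})$ are obtained from $\mathbf{d}$ by subtracting sums of exceptional curves and are generally not nef, so their $h^1$ need not vanish no matter how positive $\mathbf{d}$ is. The paper's \S\S\ref{S:cap and stop}--\ref{S:ample four} exist precisely to circumvent this: the ``capture moves'' of Lemma~\ref{captureC} select, for each curve $C$ in the support of a cycle, a specific pair of disjoint curves $S,T$ for which $-K_X+D-S-T-C$ \emph{is} nef and big (or a conic), and Lemmas~\ref{ma7}--\ref{cocco} carry out a genuine case analysis over the shape of the nef part of $D$ (for instance the infinite family $-(n+1)K_X+(m+1)Q$ of Lemma~\ref{cocco} needs its own argument). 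No finite list of multidegrees remains after a soft degree bound; you would have to supply the content of these sections.

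Two further points. You treat only nef $\mathbf{d}$, but effective non-nef classes (and non-effective ones with $k[\cG]_{\mathbf{d}}\neq 0$) of anticanonical degree at least three can also carry minimal generators; the paper disposes of them in \S\ref{S:non ample divisors} via Lemmas~\ref{tirati} and~\ref{negati} together with an induction on the degree of the del Pezzo surface, a step absent from your outline. Finally, the r\^ole of the \free{} hypothesis is not ``a pencil of sections sweeping out the surface'': it is the condition that any $119$ tritangent planes span $|{-2}K_X|$ (Definition~\ref{sweeping}), proved unconditionally in characteristic different from two in Proposition~\ref{tritangenti}, and it enters through Lemma~\ref{captureK} (capturing the generators $k_1,k_2$) and through the spanning statements behind Lemma~\ref{tirati}. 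Your instinct that the characteristic-two degeneration of the bianticanonical double cover is the source of the hypothesis is correct, but the mechanism is the possible failure of the $120$ reducible members of $|{-2}K_X|$ to span that linear system, not a rank drop in the degree-three Koszul differential.
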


\begin{remark}
Our method of proof is cohomological in nature and relies mainly on the Kawamata-Viehweg vanishing theorem.  By~\cite{Te} and~\cite{Xie}, this theorem is independent of the characteristic of $k$ for rational surfaces.
\end{remark}

The argument we give works for del Pezzo surfaces of every degree.  We include a short proof of the conjecture for del Pezzo surfaces of degree at least two in \S\ref{Xgrado2}.  An alternative proof of Conjecture~\ref{C:BP} for general del Pezzo surfaces in characteristic zero, obtained independently by computational means, appears in a paper by Sturmfels and Xu~\cite{SX}.  They apply the theory of sagbi bases to construct an initial toric ideal of $I_X$ which is generated by quadrics. Geometrically this corresponds to a
degeneration of the universal torsor on X to a suitable toric variety.

In order to prove Theorem~\ref{T:main theorem}, we modify the approach taken in~\cite{LV}. We use a complex of vector spaces whose homology determines part of the structure of the minimal free $\Pic(X)$-graded resolution of $\Cox(X)$ over a polynomial ring.  We show that sufficiently many Betti numbers of this minimal free resolution vanish to establish the theorem. We hope that similar techniques can be applied to obtain presentations of Cox rings of other classes of varieties, for example singular del Pezzo surfaces, blow-ups of $\bP^n$ at points lying on the rational normal curve of degree $n$ and $\overline M _{0,n}$.

The paper is organized as follows.  In \S\ref{S:notation} we define del Pezzo surfaces, fix presentations for their Cox rings and establish notation for the rest of the paper. In \S\ref{S:geometry} we study the nef cone of del Pezzo surfaces and prove some geometric results.  In \S\ref{pain} we analyze low degree linear systems on del Pezzo surfaces of degree at most two.  Our proof of Theorem~\ref{T:main theorem} uses the statement of~\cite{BP}*{Proposition~3.4}.  However, the proof of this proposition given in~\cite{BP} has a gap: it applies only to general del Pezzo surfaces of degree one in characteristic not two (see~\cite{Popov}).  Thus we prove Proposition~\ref{tritangenti} in order to establish Theorem~\ref{T:main theorem} for all del Pezzo surfaces of degree one in characteristic not two.  In \S\ref{S:betti} we review the approach of~\cite{LV} to study the ideal of relations of finitely generated Cox rings, adapting it to the case of degree one del Pezzo surfaces. In \S\ref{S:cap and stop} we define the notions of \defi{capturability} of a divisor and \defi{stopping criterion}, and prove Theorem~\ref{thm:games}, the main ingredients in our proof of the Batyrev--Popov conjecture. In \S\ref{S:ample four} we show the capturability of most divisors on del Pezzo surfaces of degree one. We then handle the remaining cases for del Pezzo surfaces of degree one (\S\S\ref{S:ample three},\,\ref{S:non ample divisors}) and del Pezzo surfaces of higher degree (\S\ref{Xgrado2}). In \S\ref{genedue} we finish the proof of the Batyrev--Popov conjecture and give the first multigraded Betti numbers of the Cox rings of del Pezzo surfaces.

\subsection*{Acknowledgements}

We thank Bernd Sturmfels and David Eisenbud for many useful discussions and Ragni Piene for valuable conversations on the Gauss map
for curves in positive characteristic while at the Spring 2009 MSRI's Algebraic Geometry semester. We would also like to thank the anonymous referee for carefully reading the manuscript and for suggesting valuable improvements. 

\section{Notation and background on del Pezzo surfaces}
\label{S:notation}

We briefly review some facts about del Pezzo surfaces and establish much of the paper's notation along the way.

\begin{definition}
A \defi{del Pezzo surface} 
$X$ is a smooth, projective surface over $k$ whose anticanonical divisor $-K_X$ is ample.  
The \defi{degree} of $X$ is the integer $(K_X)^2$.
\end{definition}

\subsection{Picard groups and Cox rings}
A del Pezzo surface $X$ not isomorphic to $\bP^1 \times \bP^1$ is isomorphic to a blow-up of $\bP^2$ centered at $r \leq 8$ points \emph{in general position}: this means no three points on a line, no six on a conic and no eight on a singular cubic with a singularity at one of the points. Let $L$ be the inverse image of a line in $\bP^2$ and let $E_1,\dots,E_r$ be the exceptional divisors corresponding to blown-up points. Then $\bigl(L,E_1,\dots,E_r)$ is a basis for $\Pic (X)$, and 
\[
\Cox(X) := \hspace{-5pt} \bigoplus_{(m_0,\dots,m_r)\in\mathbb{Z}^{r+1}} \hspace{-5pt} 
\HX{0}{m_0L+m_1E_1+\dots+m_rE_r}.
\]
If $X$ has degree one, then with the above choice of basis, the classes in $\Pic(X)$ of the $240$ exceptional curves  are given in Table~\ref{tab:-1curves1} (see~\cite{M}). 

For a del Pezzo surface $X$ of degree at least two, we let $\cG = \cC$ denote the set of exceptional curves.  If $X$ has degree one, then we let $\cC$ denote the set of exceptional curves on $X$ and $\cG : = \cC \cup \{K_1,K_2\}$, where $K_1,K_2 \in |{-K}_X|$ are distinct. In all cases, we write $k[\cG]$ for the polynomial ring whose 
variables are indexed by the elements of $\cG$.  By~\cite{BP}*{Theorem~3.2}, we know that $\Cox(X)$ is a finitely generated 
$k$-algebra, generated by sections whose divisor classes have anticanonical degree one.  
For all $G \in \cG$, we let $g \in \HX{0}{G}$ be a non-zero element.  We typically use uppercase letters for 
divisors on $X$ and we denote the generator of $k[\cG]$ associated to an exceptional 
curve $E$ by the corresponding lowercase letter $e$. If $X$ has degree one, then we denote the generators of $k[\cG]$ 
corresponding to $K_1$ and $K_2$ by $k_1$ and $k_2$, respectively.  Furthermore, given any exceptional 
curve $E \in \cC$, we let $E'$ denote the unique exceptional curve whose divisor class is 
$-2K_X-E$, and we denote accordingly the generator corresponding to $E'$ by $e'$.

\begin{table}[h]
\caption{Exceptional curves on $X$}
\label{tab:-1curves1}
\begin{center}
\begin{tabular}{|c|l|}
\hline
\# of curves & Picard Degree (up to a permutation of $E_1, \ldots , E_8$)\\[2pt]
\hline
\hline
$8$ & $E_1$\\[2pt]
$28$ & $L-E_1-E_2$ \\[2pt]
$56$ & $2L-E_1-E_2-E_3-E_4-E_5$\\[2pt]
$56$ & $3L-2E_1-E_2-E_3-E_4-E_5-E_6-E_7$\\[2pt]
$56$ & $4L - 2E_1 - 2E_2 - 2E_3 - E_4 - E_5 - E_6 - E_7 - E_8$\\[2pt]
$28$ & $5L - 2E_1 - 2E_2 - 2E_3 - 2E_4 - 2E_5 - 2E_6 - E_7 - E_8$\\[2pt]
$8$ & $6L - 3E_1 - 2E_2 - 2E_3 - 2E_4 - 2E_5 - 2E_6 - 2E_7 - 2E_8$\\
\hline 
\end{tabular}
\end{center}
\end{table}
\noindent
There is a surjective morphism 
\[
k[\cG] \to \Cox(X)
\]
that maps $g$ to $g$. We denote the kernel of this map by $I_X$.

For any divisor $D$ on $X$, any integer $n$ and any ${\rm Pic} (X)$-graded ideal 
$J \subset k[\cG]$, denote by $J_D$ the vector space of homogeneous elements of ${\rm Pic} (X)$-degree 
$D$, by $J_n$ the vector space of the homogeneous elements of anticanonical degree 
$n$ and call them respectively the degree $D$ part of $J$ and the degree $n$ part of $J$.

Finally, let $J_X$ be the ideal generated by $(I_X)_2$; since $J_X\subset I_X$, there is a surjection
\[
(k[\cG]/J_X)_D\to (k[\cG]/I_X)_D.
\]
This map plays a role in \S\ref{S:ample three}.

\section{Remarks on the nef cone of del Pezzo surfaces}
\label{S:geometry}

In this section we collect basic results on del Pezzo surfaces that we use in the paper.  The following result is well-known (see~\cite{De}*{p.~148~6.5}).

\begin{prop} \label{clane}
Let $X$ be a del Pezzo surface of degree $d \leq 7$.  A divisor 
class $C \in {\rm Pic} (X)$ is nef (respectively ample) if and 
only if $C \cdot E \geq 0$ (respectively $C \cdot E > 0$) for all 
$(-1)-$curves $E \subset X$.
\end{prop}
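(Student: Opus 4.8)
The plan is to deduce the statement from the dual description of the nef and ample cones in terms of the Mori cone $\overline{\mathrm{NE}}(X)$ of effective curve classes. The ``only if'' implications are immediate: a nef class pairs non-negatively with every irreducible curve, in particular with every $(-1)$-curve, and likewise an ample class pairs strictly positively. For the converse I would reduce everything to the single assertion that $\overline{\mathrm{NE}}(X)$ is the rational polyhedral cone generated by the classes of the (finitely many) $(-1)$-curves on $X$. Granting this, if $C \cdot E \geq 0$ for every $(-1)$-curve $E$ then $C$ is non-negative on a set of generators of $\overline{\mathrm{NE}}(X)$, hence on the whole cone, so $C$ is nef; and if $C \cdot E > 0$ for every $(-1)$-curve $E$, then since there are only finitely many such classes $C$ is strictly positive on $\overline{\mathrm{NE}}(X) \setminus \{0\}$, so $C$ is ample by Kleiman's criterion.

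To prove that $\overline{\mathrm{NE}}(X)$ is generated by $(-1)$-curves it suffices to show that every irreducible curve class lies in the cone they span, and for this I would split according to the self-intersection of an irreducible curve $Z$. If $Z^2 < 0$, then adjunction gives $Z^2 + Z \cdot K_X = 2p_a(Z) - 2 \geq -2$; since $-K_X$ is ample and $Z$ is effective we have $Z \cdot K_X \leq -1$, and combining these forces $Z^2 = Z \cdot K_X = -1$ and $p_a(Z) = 0$, so that $Z$ is itself a $(-1)$-curve. Thus the negative part of the cone is entirely accounted for, and $C$ automatically pairs non-negatively (resp. positively) with such $Z$ by hypothesis.

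The main obstacle is the case $Z^2 \geq 0$: here $Z$ is nef, since it meets every other irreducible curve non-negatively and meets itself non-negatively, and I must nonetheless show that its class decomposes as a non-negative combination of $(-1)$-curves. This is exactly the point where the hypothesis $d \leq 7$ enters. Since $X \not\cong \mathbb{P}^1 \times \mathbb{P}^1$, it is the blow-up of $\mathbb{P}^2$ at $r = 9 - d \geq 2$ points, and I would use this explicit model together with Table~\ref{tab:-1curves1} (or its analogue in higher degree) to write each effective class of non-negative self-intersection as a sum of $(-1)$-curve classes; the prototype is $L = (L - E_1 - E_2) + E_1 + E_2$, a decomposition that already fails when only one point is blown up. Alternatively, one can invoke the Cone Theorem: because $-K_X$ is ample, $\overline{\mathrm{NE}}(X)$ is rational polyhedral with extremal rays generated by rational curves of anticanonical degree at most $3$, and then a class with $Z^2 > 0$ is big, hence interior, so cannot be extremal once $\rho(X) \geq 2$, while a class with $Z^2 = 0$ is shown to decompose precisely when $r \geq 2$; identifying every extremal generator as a $(-1)$-curve then reduces to the adjunction computation above. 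The delicate feature throughout is that $d = 8$ must be genuinely excluded, since there the ruling classes of $\mathbb{P}^1 \times \mathbb{P}^1$ and the fiber class on the blow-up of $\mathbb{P}^2$ at one point are extremal but not $(-1)$-curves; this is why the bound $d \leq 7$ cannot be relaxed.
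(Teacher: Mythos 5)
The paper offers no proof of this proposition at all: it is stated as well known, with a citation to Debarre (p.~148, 6.5), so there is no internal argument to measure yours against. That said, your proof is correct and is the standard one. The forward implications are indeed immediate, and reducing both converses to the single claim that $\overline{\mathrm{NE}}(X)$ is the (finitely generated) cone spanned by the $(-1)$-curve classes is the right move, with Kleiman's criterion then handling the ample case. The adjunction computation disposing of irreducible curves of negative self-intersection is complete as written. For the classes of non-negative square, the Cone Theorem route is the cleaner of your two alternatives and does close the argument, but two points should be made fully explicit: an extremal ray generated by a nef class $Z$ with $Z^2>0$ would give a contraction of $X$ to a point, impossible since $\rho(X)=10-d\geq 3$ (equivalently, your observation that a big class lies in the interior of $\overline{\mathrm{NE}}(X)$); and an extremal ray with $Z^2=0$ is the fibre class of a conic bundle, which on a del Pezzo surface of degree $d$ has exactly $8-d\geq 1$ reducible fibres, so that the fibre class splits as a sum of two $(-1)$-curves and the ray is not extremal after all --- this is precisely the fact the paper records later in the proof of Lemma~\ref{dose}. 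Your closing remark that $d=8$ is genuinely excluded (the rulings on $\mathbb{P}^1\times\mathbb{P}^1$ and the fibre class on $Bl_p(\mathbb{P}^2)$) correctly identifies the only place the hypothesis $d\leq 7$ is used.
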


\begin{definition}
Let $X$ be a del Pezzo surface.  The \defi{minimal ample divisor on $X$} is 
the ample divisor $A_X$ defined in the following table:
\[
\begin{array} {|c|c|c|}
\hline 
X & A_X & (K_X)^2 \\
\hline
\hline
\bP^2 & - \frac{1}{3} K_X & 9 \\[5pt]
\bP^1 \times \bP^1 & - \frac{1}{2} K_X & 8 \\[5pt]
Bl_p(\bP^2) & 2L-E & 8 \\[5pt]
Bl_{p_1 , \ldots , p_r } (\bP^2) & -K_X & 9-r \\[5pt]
\hline 
\end{array}
\]
\end{definition}
We also extend the list to include $X = \bP^1$ and we define the minimal ample 
divisor on $\bP^1$ to be the class of a point. To simplify the notation, if $b \colon X\to Y$ is a morphism, then we may denote $b^{*}(A_Y)$ by $A_Y$.  Minimal ample divisors allow us to give a geometric description of nef divisors.

\begin{cor} \label{maquale}
Let $X_r $ be a del Pezzo surface of degree $9 - r$.  Let $D \in {\rm Pic} (X_r )$ be a nef divisor.  Then we can  find 
\begin{itemize}
\item non-negative integers $n_0, n_1, \ldots , n_r $;
\item a sequence of morphisms 
\[
X_r \longrightarrow X_{r -1} \longrightarrow {\cdots} \longrightarrow
X_1 \stackrel{b}{\longrightarrow} X_{0} ,
\]
where each morphism is the contraction of a $(-1)$-curve except for $b$, which is allowed to be a conic bundle;
\end{itemize}
such that 
\[
D = n_r A_{X_r } + n_{r -1} A_{X_{r -1}} + \ldots +n_0 A_{X_0} .
\]
\end{cor}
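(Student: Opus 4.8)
The plan is to argue by induction on $r$, at each step peeling off one copy of the minimal ample divisor $A_{X_r}$ and descending the remainder along a single $(-1)$-curve contraction. For the base cases: when $r=0$ we have $X_0 = \bP^2$, every nef class is $n_0 L = n_0 A_{X_0}$ with $n_0 \geq 0$, and there is nothing to contract. When $r = 1$ the surface $X_1$ has degree $8$, so it is either $Bl_p(\bP^2)$ or $\bP^1 \times \bP^1$; in each case $\Pic(X_1)$ has rank two and the nef cone is a quadrant that I will describe explicitly in coordinates. I will then check by a direct linear computation that a nef $D$ lies in the cone spanned by $A_{X_1}$ together with either the pullback of $A_{\bP^2}$ under the contraction $b \colon Bl_p(\bP^2) \to \bP^2$ or the pullback of $A_{\bP^1}$ (the class of a point) under a conic bundle $b \colon X_1 \to \bP^1$; the two cones overlap along the wall where both descriptions are valid and together cover the whole nef cone. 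This is exactly where the conic bundle option in the statement is needed.

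For the inductive step assume $r \geq 2$ and the result for $r-1$. Then $X_r$ has degree at most $7$, so by the table $A_{X_r} = -K_{X_r}$, and every $(-1)$-curve $E$ satisfies $-K_{X_r} \cdot E = 1$ by adjunction. Among the finitely many $(-1)$-curves on $X_r$, choose one, say $E$, minimizing the value $n_r := D \cdot E \geq 0$, and set $D' := D - n_r A_{X_r} = D - n_r(-K_{X_r})$. For every $(-1)$-curve $F$ we then have $D' \cdot F = D\cdot F - n_r \geq 0$ by the minimality of $n_r$, so $D'$ is nef by Proposition~\ref{clane}. Moreover $D' \cdot E = D\cdot E - n_r = 0$, and since the contraction $\pi \colon X_r \to X_{r-1}$ of $E$ identifies $\pi^* \Pic(X_{r-1})$ with the orthogonal complement of $E$, we may write $D' = \pi^* \bar D$ for a unique $\bar D \in \Pic(X_{r-1})$.

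It remains to see that $\bar D$ is nef on $X_{r-1}$, after which the inductive hypothesis expresses $\bar D$ as a non-negative combination $\sum_{i=0}^{r-1} n_i A_{X_i}$ along a chain $X_{r-1} \to \cdots \to X_0$; pulling back by $\pi$ and using the convention $b^*(A_Y) = A_Y$ gives $D = n_r A_{X_r} + \sum_{i=0}^{r-1} n_i A_{X_i}$ for the chain $X_r \stackrel{\pi}{\to} X_{r-1} \to \cdots \to X_0$, as required. For the nefness of $\bar D$ I will use Proposition~\ref{clane} again: for any $(-1)$-curve $C$ on $X_{r-1}$ the class $\pi^* C$ is effective, hence $\bar D \cdot C = \pi^* \bar D \cdot \pi^* C = D' \cdot \pi^* C \geq 0$ because $D'$ is nef. (One should also note that $X_{r-1}$ really is a del Pezzo surface: writing $\pi^*(-K_{X_{r-1}}) = -K_{X_r} + E$ and applying the projection formula shows $-K_{X_{r-1}} \cdot C > 0$ for every curve $C$, while $(-K_{X_{r-1}})^2 = (-K_{X_r})^2 + 1 > 0$, so Nakai--Moishezon applies.)

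The main obstacle is the inductive step, and specifically the choice of the $(-1)$-curve $E$: an arbitrary contraction will in general produce a class $\bar D$ that fails to be nef on $X_{r-1}$, because subtracting $n_r(-K_{X_r})$ can make the intersection with some contracted curve negative. Minimizing $D \cdot E$ over all $(-1)$-curves is what simultaneously guarantees $D'$ is nef on $X_r$ and, through the effectivity of $\pi^* C$, that the descended class stays nef. The only place the argument genuinely changes shape is at degree $8$, where $A_{X_1} \neq -K_{X_1}$ and one must fall back on the conic bundle contraction to cover the part of the nef cone not reached by the blow-down to $\bP^2$; handling those rank-two Picard groups by hand is routine but is the one computation I would carry out in full.
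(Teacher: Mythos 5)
Your argument is correct and is essentially the paper's own proof: both induct on $r$, set $n_r := \min\{D\cdot E\}$ over the $(-1)$-curves, note that $D + n_r K_{X_r}$ is nef by Proposition~\ref{clane} and orthogonal to some $(-1)$-curve, descend along the contraction of that curve and apply the inductive hypothesis, with the conic-bundle option absorbed into the $r \leq 1$ base cases (which the paper simply calls ``immediate''). One small point: when $r = 2$ the surface $X_{r-1}$ has degree $8$, so Proposition~\ref{clane} does not apply to it, but your computation $\bar D \cdot C = D' \cdot \pi^* C \geq 0$ via effectivity of the total transform already works for \emph{every} irreducible curve $C$ on $X_{r-1}$, giving nefness of the descended class directly and making the appeal to Proposition~\ref{clane} unnecessary there.
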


\begin{proof}
We proceed by induction on $r$, the cases $r \leq 1$ being immediate. Suppose that $r \geq 2$ and let 
\(n := \min \bigl\{ L \cdot D ~|~ 
L \subset X {\text{ is a $(-1)-$curve} } \} \).  By assumption we have 
$n \geq 0$.  Let $\bar D := D + n K_{X_r}$; note that $\bar D$ 
is nef by Proposition~\ref{clane}.  Choose a $(-1)-$curve  $L_0 \subset X$ such that $\bar D \cdot L_0 = 0$.  
Thus $\bar D$ is the pull-back of a nef divisor on the del Pezzo surface 
$X_{r -1}$ obtained by contracting $L_0$.  The result follows by 
the inductive hypothesis.
\end{proof}

Observe that the integer $n_r$ in the statement of the corollary is the nef threshold (see~\cite{Reid}*{p.~126}),  of the divisor $D$ with respect to the minimal ample divisor of $X_r$.  The minimal ample divisor is minimal in the sense that for every ample divisor $A$ on $X$, the divisor $A-A_X$ is nef: this follows from Corollary~\ref{maquale}.

By~\cite{Ko}*{Proposition~III.3.4} and Corollary~\ref{maquale} we deduce that every nef divisor $N$ on a del Pezzo surface is effective and that $|2N|$ is base-point free. Moreover, if $N$ is a nef non big divisor, then $N$ is a multiple of a conic bundle.

Let $X \to Y$ be a morphism with connected fibers and let $A$ be the pull-back to 
$X$ of the minimal ample divisor on $Y$.  If $Y \simeq \bP^1$, then we call $A$ a 
\defi{conic}.  If $Y \simeq \bP^2$, then we call $A$ a \defi{twisted cubic}, 
by analogy with the case of the cubic surfaces.  Finally, if $Y$ is a del Pezzo surface 
of degree $d$, then we call $A$ an \defi{anticanonical divisor of degree $d$ in $X$}.

We summarize and systematize the previous discussion in the following lemma.

\begin{lemma} 
\label{mad}
Let $X$ be a del Pezzo surface.  The cone of nef divisors on $X$ is generated by 
the following divisors:
\begin{enumerate}
\item the conics; 
\item the twisted cubics; 
\item the anticanonical divisors of degree $d \leq 3$ in $X$.
\end{enumerate}
In particular, if $N$ is any non-zero nef divisor on $X$, then we may find $r:=8-\deg(X)$ distinct 
exceptional curves $E_1 , E_2 , \ldots , E_r$ on $X$ such that $N-E_1, \ldots , N-E_r$ 
are either nef or the sum of a nef divisor and an exceptional curve, unless $\deg (X) = 1$ 
and $N=-K_X$.
\end{lemma}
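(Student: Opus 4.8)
The plan is to derive both assertions from Corollary~\ref{maquale}.

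For the description of the nef cone, let $D$ be a nef divisor and write $D = n_r A_{X_r} + \dots + n_0 A_{X_0}$ as in Corollary~\ref{maquale}, with $X = X_r \to \cdots \to X_1 \stackrel{b}{\to} X_0$. By the terminology fixed before the lemma, $A_{X_0}$ is a conic if $X_0 \simeq \bP^1$ and a twisted cubic if $X_0 \simeq \bP^2$, while for each intermediate $X_i$ of degree at most three the summand $A_{X_i} = -K_{X_i}$ is an anticanonical divisor of degree $\le 3$, i.e. a generator of type (3). It then remains to treat the finitely many summands $A_{X_i}$ with $X_i$ a del Pezzo surface of degree $4 \le d \le 8$. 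For each such surface I would exhibit $A_{X_i}$ explicitly as a non-negative sum of conics and twisted cubics; for instance on a degree $d$ blow-up of $\bP^2$ one has
\[
-K = (L-E_1)+(2L-E_2-E_3-E_4-E_5), \qquad -K = (L-E_1)+(L-E_2)+(L-E_3)
\]
in degrees $4$ and $6$, the mixed decompositions $-K = L + (2L-E_1-E_2-E_3-E_4)$ and $-K = L + (L-E_1)+(L-E_2)$ in degrees $5$ and $7$, together with $2L-E = L + (L-E)$ on $Bl_p(\bP^2)$ and $-\frac{1}{2}K = f_1+f_2$ on $\bP^1\times\bP^1$. Since conics and twisted cubics on $X_i$ pull back along $X \to X_i$ to conics and twisted cubics on $X$, this writes $D$ as a non-negative combination of the three listed types; the reverse inclusion is immediate because every listed divisor is nef.

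For the final assertion, set $m := 9-\deg(X)$, so that $X$ is the blow-up of $\bP^2$ at $m$ points and $r = m-1$. Applying Corollary~\ref{maquale} to $N$ produces a chain $X = X_m \to \cdots \to X_1 \to X_0$ together with $N = \sum_i n_i A_{X_i}$, and I would take $E_1, \dots, E_r$ to be the exceptional curves on $X$ corresponding to the $r = m-1$ curves $\ell_m, \dots, \ell_2$ contracted by the first $r$ steps $X_m \to \cdots \to X_1$, which terminate on the degree-eight surface $X_1$. These are distinct, and since $E_i$ is contracted by $X \to X_{i-1}$ we have $A_{X_j}\cdot E_i = 0$ for $j<i$, whence $N\cdot E_i = \sum_{j\ge i} n_j\,(A_{X_j}\cdot E_i)$; for the first curve this gives $N\cdot E_m = n_m$ when $\deg(X)\le 7$, as then $A_X = -K_X$ meets $\ell_m$ once.

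It remains to check that each $N - E_i$ is nef or the sum of a nef divisor and an exceptional curve. For a fixed $E = E_i$ I would test nefness against every exceptional curve $F$ via Proposition~\ref{clane}: one has $(N-E)\cdot F = N\cdot F - E\cdot F$, which is non-negative except possibly for the finitely many $F$ with $E\cdot F > N\cdot F$. When no such $F$ occurs, $N-E$ is nef; when the deficiency is concentrated on a single curve $F_0$, I would verify that $N-E-F_0$ is nef and so exhibit $N-E = (N-E-F_0)+F_0$ as a nef divisor plus an exceptional curve. Carrying out this intersection bookkeeping — in particular controlling the pairs $F$ with $E\cdot F\in\{2,3\}$ that arise only in degrees one and two, and confirming that the failure of nefness is genuinely supported on one curve — is the technical heart of the argument and the step I expect to be the main obstacle. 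Finally, the stated exception is forced: when $\deg(X)=1$ and $N=-K_X$ one has $N = A_X$, so $-K_X\cdot E = (-K_X)^2 = 1$ for every exceptional curve $E$ and hence $-K_X\cdot(N-E)=0$; as $-K_X$ is ample and $N-E\ne 0$, the divisor $N-E$ is non-effective, so no exceptional curve works and the peeling fails precisely in this case.
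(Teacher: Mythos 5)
Your treatment of the first assertion is correct and is essentially the paper's argument: both reduce, via the peeling procedure of Corollary~\ref{maquale}, to expressing the minimal ample divisors of the intermediate surfaces of degree at least four as non-negative sums of conics and twisted cubics, which you (correctly) make explicit where the paper simply calls it immediate. Your identification of why the exception $\deg(X)=1$, $N=-K_X$ is forced is also correct.

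The second assertion, however, is where your proposal has a genuine gap, and you say so yourself: the ``intersection bookkeeping'' needed to show that $N-E_i$ is nef or nef plus an exceptional curve for your chosen curves is exactly the content of the statement, and you neither carry it out nor prove the dichotomy you rely on (that any failure of nefness of $N-E_i$ is supported on a single exceptional curve $F_0$ with $N-E_i-F_0$ nef). Because you apply Corollary~\ref{maquale} directly to the arbitrary nef divisor $N$, you are left verifying the claim for an infinite family of divisors, which is why the check does not close up. The paper's proof avoids this entirely by a one-line reduction that your argument misses: it suffices to verify the second statement for the \emph{generators} of the nef cone listed in the first part (together with $-2K_X$ when $\deg(X)=1$, to cover $N=n(-K_X)$ with $n\geq 2$). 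Indeed, if $N=A+N'$ with $A$ a generator admitting curves $E_1,\dots,E_r$ as in the statement and $N'$ nef, then $N-E_j=(A-E_j)+N'$ is again nef or the sum of a nef divisor and an exceptional curve. This reduces the problem to a finite and easy case-by-case check (for a conic take one component of each reducible fiber, for a twisted cubic take contracted curves, etc.), which is the verification your proposal defers. As written, the proposal is therefore incomplete; incorporating the reduction to generators would both fill the gap and bring it in line with the paper's proof.
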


\begin{proof}
The divisors in the list are clearly nef.  Conversely, if $D$ is any nef divisor, then 
either $D$ is a multiple of a conic, and we 
are done, or a positive multiple of $D$ induces a morphism with connected fibers 
$X \to Y$.  It is clear that if $A$ is the pull-back to $X$ of the minimal ample 
divisor on $Y$, then $D-A$ is nef.  By induction on $n := -K_X \cdot D$ we therefore 
reduce to showing that the anticanonical divisors of degree $d \geq 4$ in $X$ are 
non-negative linear combinations of the divisors listed.  This is immediate.

For the second statement, note that it suffices to check it for the divisors in the list, 
and for $-2K_X$ if $\deg (X) = 1$, where the result is easy to verify.
\end{proof}

The following lemma will be used in the proof of Lemma~\ref{cocco}.

\begin{lemma} 
\label{dose}
Let $X$ be a del Pezzo surface, let $b \colon X \to \bP^1$ be a conic bundle with fiber 
class $Q$ and let $C$ be an exceptional curve such that $C \cdot Q = 2$.  There are 
exactly five reducible fibers of $b$ such that $C$ intersects both components.  In 
particular, if $\deg (X) = 1$, then there are two reducible fibers of $b$ such that $C$ is 
disjoint from one of the two components in each fiber.
\end{lemma}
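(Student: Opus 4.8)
The plan is to reduce both assertions to an explicit computation in $\Pic(X)$. First I would record the local picture at a reducible fiber. Since $C\cdot Q = 2\neq 0$ while every component $F$ of every fiber satisfies $F\cdot Q = 0$, the curve $C$ coincides with no fiber component, so $C\cdot F\geq 0$ for every such $F$. Writing a reducible fiber as $F_1+F_2$ with $F_1\cdot F_2 = 1$, we have $C\cdot F_1 + C\cdot F_2 = C\cdot Q = 2$ with both terms non-negative; hence $C$ meets both components if and only if $C\cdot F_1 = C\cdot F_2 = 1$, the only alternative being $\{C\cdot F_1, C\cdot F_2\} = \{0,2\}$, in which case $C$ is disjoint from exactly one component. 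Thus the two statements amount to counting, among the reducible fibers, those with $C\cdot F_1 = C\cdot F_2 = 1$, and showing that the complementary count is $2$ when $\deg(X)=1$.

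Next I would choose a geometric marking adapted to the conic bundle. All ruling classes (those $Q$ with $Q^2 = 0$ and $-K_X\cdot Q = 2$) form a single orbit under the Weyl group, so after a suitable choice of birational morphism $X\to\bP^2$ I may assume $Q = L-E_1$, the pencil of lines through the first blown-up point. The reducible fibers are then exactly the pairs $\{E_i,\, L-E_1-E_i\}$ for $i = 2,\dots,r$, where $r = 9-\deg(X)$, giving the expected $r-1 = 8-\deg(X)$ reducible fibers. Writing $C = aL-\sum_j m_j E_j$, one has $C\cdot E_i = m_i$ and $C\cdot(L-E_1-E_i) = 2-m_i$, so $C$ meets both components of the $i$-th fiber precisely when $m_i = 1$.

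The heart of the argument is a short computation. From $C\cdot Q = a-m_1 = 2$ we get $m_1 = a-2$, and since $0\leq m_i\leq 2$ for $i\geq 2$ I set $n_t = \#\{i\geq 2 : m_i = t\}$ for $t\in\{0,1,2\}$. The conditions $-K_X\cdot C = 1$ and $C^2 = -1$ give $\sum_{j}m_j = 3a-1$ and $\sum_j m_j^2 = a^2+1$; subtracting the $j=1$ contributions yields $n_1+2n_2 = 2a+1$ and $n_1+4n_2 = 4a-3$. Subtracting these gives $n_2 = a-2$ and hence $n_1 = 5$, independently of $a$ and of $\deg(X)$. Since $n_1$ is exactly the number of reducible fibers meeting both components, this proves the first assertion. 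For $\deg(X) = 1$ we have $r = 8$, so there are $7$ reducible fibers in total and $n_0+n_2 = 7-5 = 2$ of them have $m_i\in\{0,2\}$; in each such fiber $C$ is disjoint from one component, which is the final assertion.

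The only non-formal step, and thus the main obstacle, is justifying the reduction to $Q = L-E_1$: namely that every conic bundle class is Weyl-equivalent to $L-E_1$ and that, in the adapted marking, the reducible fibers are precisely the $r-1$ pairs listed above (in particular that no other decomposition of $Q$ into two exceptional classes occurs). Both are standard facts about the configuration of exceptional curves, but they should be stated with care; once they are available the numerical computation is immediate and characteristic-free. I note in passing that a bisection $C$ exists only when $2\leq a\leq r-4$, i.e.\ when $\deg(X)\leq 3$, so the hypotheses already force enough reducible fibers for the count of $5$ to be meaningful.
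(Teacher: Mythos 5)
Your proof is correct, but it follows a genuinely different route from the paper's. Both arguments open the same way: since $C\cdot Q=2$ and $C$ is not a fiber component, each reducible fiber $F_1+F_2$ has either $C\cdot F_1=C\cdot F_2=1$ or $\{C\cdot F_1,C\cdot F_2\}=\{0,2\}$. From there the paper argues geometrically: if $k$ fibers are of the first kind, it contracts one component in each reducible fiber (always one disjoint from $C$ when possible) to reach a relatively minimal ruled surface $X'\to\bP^1$, and observes that the image of $C$ is a smooth rational curve of anticanonical degree $k+1$, self-intersection $k-1$ and fiber degree $2$; a short check on $\bP^1\times\bP^1$ and $Bl_p(\bP^2)$ forces $k=5$. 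You instead fix a blow-down structure with $Q=L-E_1$ and solve the two linear equations coming from $-K_X\cdot C=1$ and $C^2=-1$ for the multiplicity counts $n_0,n_1,n_2$, getting $n_1=5$ directly. The trade-off is as you identify it: the paper's argument needs no marking and no classification of the decompositions of $Q$ into exceptional classes, at the cost of the contraction bookkeeping; yours needs the (standard) facts that the Weyl group acts transitively on conic classes and that the reducible fibers of $|L-E_1|$ are exactly the pairs $\{E_i,\,L-E_1-E_i\}$, $i\geq 2$ --- the latter follows at once from Table~\ref{tab:-1curves1}, since the only exceptional classes orthogonal to $L-E_1$ are the $E_i$ and $L-E_1-E_i$ --- but in exchange it is a closed-form lattice computation that also yields the extra information $n_2=a-2$ and the range of degrees $a$ for which a bisection exists. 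Both proofs conclude the degree-one statement identically, from the count of $8-\deg(X)$ reducible fibers.
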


\begin{remark}
If $\deg (X) \geq 4$, then there are no exceptional curves $C$ such that $C \cdot Q = 2$, 
and hence the lemma applies non-trivially only to the cases $\deg (X) \leq 3$.
\end{remark}

\begin{proof}
Let $S$ and $T$ be the components of a reducible fiber of $b$.  Since $C \cdot Q = 2$, 
there are only two possibilities for the intersection numbers $C \cdot S$ and $C \cdot T$: 
they are either both equal to one, or one of them is zero and the other is two.  Suppose 
that there are $k$ reducible fibers of $b$ such that the curve $E$ intersects both 
components.  Thus contracting all the components in fibers of $b$ disjoint from $E$ and 
one component in each of the remaining reducible fibers, we obtain a relatively minimal 
ruled surface $b' \colon X' \to \bP^1$, together with a smooth rational curve $C'$ in $X'$, 
the image of $C$, having anticanonical degree $k+1$, square $k-1$ and intersection 
number two with a fiber of $b'$.  Since $X'$ is isomorphic to either $\bP^1 \times \bP^1$ 
or $Bl_p(\bP^2)$, a direct calculation shows that this is only possible if 
$X' \simeq \bP^1 \times \bP^1$ and $k = 5$.

The last statement follows from the fact that any conic bundle structure on $X$ contains 
exactly $8-\deg (X)$ reducible fibers.
\end{proof}

\section{Low degree linear systems} \label{pain}

The lemmas in this section determine subsets of monomials that span $\Cox(X)_D$ when $(\deg(X),D) \in \big\{ (2, -K_X), (1,-2K_X) \big\}$.

\begin{lemma} \label{gene2}
Let $X$ be a del Pezzo surface of degree two.  The linear system $|{-K}_X|$ is spanned 
by any five of its reducible elements.
\end{lemma}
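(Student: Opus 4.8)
The plan is to translate the statement into a bound on how many reducible members a single pencil of anticanonical curves can contain. A standard Riemann--Roch computation gives $\dim_k \HX{0}{-K_X}=3$, so $|{-K}_X|\cong\bP^2$, and five of its elements fail to span precisely when they are collinear, i.e.\ when they all lie on a common line of $\bP^2=|{-K}_X|$, that is, in a common pencil. A reducible member of $|{-K}_X|$ is a sum $A+B$ of two effective divisors of anticanonical degree one; since $-K_X$ is ample each summand is a single exceptional curve, and $(A+B)^2=(-K_X)^2=2$ forces $A\cdot B=2$. Thus it suffices to show that any pencil $P\subset|{-K}_X|$ contains at most four reducible members.

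So let $A_1+B_1,\dots,A_m+B_m$ be the reducible members of a pencil $P$. Its base locus has degree $(-K_X)^2=2$; assume first it consists of two distinct points $q_1,q_2$. Every member passes through $q_1$ and $q_2$, and I claim that after relabelling one has $q_1\in A_i$ and $q_2\in B_i$ for all $i$, so the labelling is globally consistent. Granting this, for $i\neq j$ the curves $A_i,A_j$ meet at $q_1$ and $B_i,B_j$ meet at $q_2$, so $A_i\cdot A_j\geq1$ and $B_i\cdot B_j\geq1$. Two distinct exceptional curves whose sum is not $-K_X$ meet in at most one point (a one-line consequence of the Hodge index theorem applied to $(-K_X)^{\perp}$), so in fact $A_i\cdot A_j=B_i\cdot B_j=1$. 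Expanding $(A_i+B_i)\cdot(A_j+B_j)=(-K_X)^2=2$ then forces $A_i\cdot B_j=A_j\cdot B_i=0$ for $i\neq j$.

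With these intersection numbers the proof concludes by a Hodge index argument, which I expect to be the crux. Put $S=\sum_i A_i$, $S'=\sum_i B_i$ and $v=S-S'$. Because every $A_i$ and $B_i$ has anticanonical degree one, $v\cdot(-K_X)=m-m=0$, so $v$ lies in $(-K_X)^{\perp}$, which is negative definite of rank $7$ by the Hodge index theorem (here $(-K_X)^2=2>0$ since $X$ is del Pezzo). On the other hand the intersection numbers above give $S^2=(S')^2=m^2-2m$ and $S\cdot S'=2m$, whence $v^2=2m(m-4)$. If $m\geq5$ this is strictly positive, contradicting negative definiteness; hence $m\leq4$, and the lemma follows. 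The two remaining points to verify are the consistency of the component labelling (a reducible member cannot carry both base points on a single component, for otherwise that component would meet a neighbouring member in more than $(-K_X)^2$ points), and the degenerate case in which $P$ has a single base point of multiplicity two, which is handled by an entirely analogous computation. The argument uses only intersection theory and the Hodge index theorem, so it is independent of the characteristic of $k$.
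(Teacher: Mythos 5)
Your proof is correct, and it takes a genuinely different route from the paper's. The paper argues on the other side of the anticanonical double cover $\varphi\colon X\to\bP^2$: the reducible members of $|{-K}_X|$ are the pairs $E+E'$ with $E+E'=-K_X$, each mapping to a bitangent line of the branch quartic, and five members fail to span exactly when the five lines are concurrent at some $p$; the paper then bounds the number of such lines through $p$ by $4$ by passing to a point $q\in\varphi^{-1}(p)$, contracting the partner $E'$ of one exceptional curve through $q$, and using that a point of a smooth cubic surface lies on at most three lines (all contained in the tangent plane section). You instead stay entirely inside $\Pic(X)$: you reduce to bounding the number $m$ of reducible members of a pencil, use the degree-two base locus to pin down the intersection pattern $A_i\cdot A_j=B_i\cdot B_j=1$, $A_i\cdot B_j=0$, and apply the Hodge index theorem to $v=\sum A_i-\sum B_i$ to get $2m(m-4)\le 0$. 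I checked the numerics and the supporting claims (a reducible anticanonical divisor is $A+B$ with $A\cdot B=2$; $E\cdot F=2$ iff $E+F=-K_X$; distinct reducible members share no component because $h^0\bigl(X,\cO_X(-K_X-E)\bigr)=1$; each base point lies on exactly one component of each member), and they all hold — in fact, once you know $A_i\cdot A_j\ge 1$ and $B_i\cdot B_j\ge 1$, the equality $(A_i+B_i)\cdot(A_j+B_j)=2$ already forces the whole pattern, so the ``two exceptional curves meet in at most one point'' lemma is not even needed. Your argument is purely lattice-theoretic and manifestly characteristic-free, whereas the paper's invokes separability of $\varphi$ and classical cubic-surface geometry. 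The one place you should tighten is the degenerate pencil whose base scheme $Z$ has length two but a single support point $q$: there the correct dichotomy is that either one component of a member contains all of $Z$ or the member is singular at $q$ (and at most one member can be singular there, since two such members would already have local intersection $\ge 4$ at $q$), which yields a different pattern, e.g.\ $A_i\cdot A_j=2$, $B_i\cdot B_j=A_i\cdot B_j=0$ after relabelling. The computation still gives $v^2=2m(m-4)$ — and indeed $A_i\cdot A_j=2$ forces $A_j\equiv -K_X-A_i$ for all pairs, which caps $m$ at $3$ outright — but this is a separate short case analysis rather than literally ``the same computation,'' and it deserves to be written out.
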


\begin{proof}
The linear system $|{-K_X}|$ defines a separable morphism of degree two $\varphi \colon X \to \bP^2$ such that every pair of exceptional curves $E,E'$ on $X$ with $E+E' = -K_X$ maps under $\varphi$ to the same line in $\bP^2$.  To prove the result it suffices to show that any five of these lines have no base-point.  Suppose that $p \in \bP^2$ is 
contained in $k_p \geq 4$ such lines; let $q \in X$ be a point in the inverse image of $p$.  By construction, the point $q$ is contained 
in $k_p$ exceptional curves.  Let $E$ be an exceptional curve containing $q$ and let 
$E'$ be the exceptional curve such that $E+E' = -K_X$.  The curve $E'$ is disjoint from 
the exceptional curves $F$ such that $F \cdot E = 1$, since $F \cdot E' = 0$.  Thus contracting $E'$ we 
obtain a del Pezzo surface $Y$ of degree three with a point contained in at least 
$k_p-1$ exceptional curves.  The anticanonical linear system embeds $Y$ as a smooth cubic surface in $\mathbb{P}^3$, and exceptional curves through a point $y \in Y$ are lines 
contained in the tangent plane to $Y$ at $y$.  This implies that $k_p-1 \leq 3$, and the result follows.
\end{proof}

To study the case of degree one del Pezzo surfaces we begin with a lemma.

\begin{lemma} \label{uti}
Let $R \subset \mathbb{P}^3$ be a curve, let $r\in R$ be a smooth point and let $p \in \mathbb{P}^3$ be a closed point different from $r$.  Let $H$ be a plane through $p$ intersecting $R$ at $r$ with order of contact $m\geq 2$. Let $\pi_R \colon R \to \mathbb{P}^2$ be the projection away from $p$, let $\bar R = \pi_R(R)$, and let $\gamma \colon R\dashrightarrow (\mathbb{P}^2)^\vee$ be the composition of $\pi_R$ and the Gauss map of $\bar R$, i.e., the map sending a general point $q \in R$ to the tangent line to $\bar R$ at $\pi_R (q)$. Then the map $\gamma $ is defined at $r$ and one of the following occurs: 
\begin{enumerate}
\item{the tangent line to $R$ at $r$ contains $p$, or}
\item{the length of the localization at $r$ of the fiber of $\gamma$ at $\gamma(r)$ equals $m-1$ if ${\rm char}(k) \nmid m$ and it equals $m$ if ${\rm char}(k) \mid m$.}
\end{enumerate}
\end{lemma}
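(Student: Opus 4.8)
The plan is to pass to a local formal parametrization near $r$ and reduce the fiber-length statement to a Gauss-map computation for a plane curve. I would first choose projective coordinates $[x_0:x_1:x_2:x_3]$ so that $p=[0:0:0:1]$ and $r$ lies in the chart $x_0\neq 0$; then $\pi$ is the linear projection $[x_0:x_1:x_2:x_3]\mapsto[x_0:x_1:x_2]$, the plane $H=\pi^{-1}(\ell)$ is the cone over a line $\ell\subset\mathbb{P}^2$, and, since $r\neq p$, the map $\pi_R$ is a morphism near $r$. I would parametrize $R$ near the smooth point $r$ by a uniformizer $t$ of the DVR $\mathcal{O}_{R,r}$, writing the affine coordinates as power series $x_i(t)$ with $r$ at $t=0$; the tangent line to $R$ at $r$ is spanned by the (necessarily nonzero) vector of first derivatives. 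That $\gamma$ is defined at $r$ is then immediate: $\gamma$ is a rational map from the curve $R$ to the proper scheme $(\mathbb{P}^2)^\vee$, and $r$ is a regular point, so $\gamma$ extends to a morphism at $r$ by the valuative criterion of properness.

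For the dichotomy, if the tangent line to $R$ at $r$ contains $p$ we are in case (1) and there is nothing more to prove. Otherwise the tangent direction is not collapsed by $\pi$, so $\pi_R$ is an immersion at $r$, the image $\bar r=\pi_R(r)$ is a smooth point of $\bar R$, and $\pi_R$ is an isomorphism onto its image near $r$; consequently the length of the fiber of $\gamma$ at $\gamma(r)$ localized at $r$ equals that of the Gauss map of $\bar R$ at $\bar r$. Because the defining equation of $H$ involves only $x_0,x_1,x_2$, its pullback to $R$ is literally the pullback of the equation of $\ell$ to $\bar R$, the same series in $t$; hence $\ell$ meets $\bar R$ at $\bar r$ with contact order $m\geq 2$, so $\ell$ is the tangent line and $\gamma(r)=\ell$.

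I would then run the local computation. After a linear change on $\mathbb{P}^2$ placing $\bar r$ at the origin and $\ell=\{X_2=0\}$, the surviving tangent direction lets me take $X_1=u$ as a local parameter and present $\bar R$ as a graph $X_2=f(u)$ with ${\rm ord}_0 f=m$, so $f=cu^m+(\text{higher order})$ with $c\neq 0$. The Gauss map reads $u\mapsto[f(u)-uf'(u):f'(u):-1]$, so the scheme-theoretic fiber over $\gamma(r)=[0:0:-1]$ is cut out by $f-uf'$ and $f'$, that is, by the ideal $(f,f')$; the sought length is $\dim_k \mathcal{O}_{\bar R,\bar r}/(f,f')$.

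The main obstacle, and the origin of the characteristic split, is this last ideal. Writing $f=u^m g$ with $g$ a unit gives $f'=u^{m-1}(mg+ug')$. If ${\rm char}(k)\nmid m$, then $mg+ug'$ is a unit, so $(f,f')=(u^{m-1})$, of colength $m-1$; if ${\rm char}(k)\mid m$, then $mg=0$, so $f'=u^m g'$ and $(f,f')=u^m(g,g')=(u^m)$ since $g$ is a unit, of colength $m$. The delicate points are keeping the scheme structure intact across the local isomorphism $\pi_R$ and verifying that the two contact orders agree; once these are in place, the characteristic-sensitive computation of $(f,f')$ completes the proof.
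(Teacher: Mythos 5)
Your argument is correct and follows essentially the same route as the paper: the same dichotomy on whether the tangent line to $R$ at $r$ passes through $p$, the same extension of $\gamma$ over $r$ by properness, and the same reduction of the fiber length to the colength of the ideal $(f,f')$ in $k[[u]]$ for a smooth branch $X_2=f(X_1)$ meeting its tangent line with contact order $m$, split according to whether ${\rm char}(k)$ divides $m$. One small imprecision: $\bar r$ need not be a smooth point of $\bar R$ and $\pi_R$ need not be an isomorphism onto its image near $r$ (other points of $R$ on the line through $p$ and $r$ may also map to $\bar r$), but since the fiber is localized at $r \in R$ and your graph presentation $X_2=f(u)$ only involves the branch of $\bar R$ coming from $r$, the computation is unaffected.
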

\begin{proof}  
The rational map $\gamma $ is defined at $r$ since $r \in R$ is a smooth point and the range of $\gamma$ is projective. Choose homogeneous coordinates $X_0,X_1,X_2,X_3$ on $\mathbb{P}^3$ so that $p=[1 , 0 , 0 , 1]$ and $r=[0 , 0 , 0 , 1]$. In these coordinates $H$ is defined by the linear form $A X_1+B X_2$, for some $A, B \in k$. In the affine coordinates $(x_0 = \frac{X_0}{X_3} , x_1 = \frac{X_1}{X_3} , x_2 = \frac{X_2}{X_3})$ the curve $R$ is defined near the origin $r$ by a complete intersection $(f_1,f_2)$ and thus its embedded tangent space is the kernel of the matrix $DF(r)= \bigl( \frac{\partial f_i}{\partial x_j}(r) \bigr)$ with $i \in \{1 , 2\}$ and $j \in \{0 , 1 , 2\}$.  Therefore either the first column of $DF(r)$ vanishes (i.e.\ $\frac{\partial f_1}{\partial x_0}(r)=\frac{\partial f_2}{\partial x_0}(r)=0$) and the tangent line to $R$ through $r$ contains $p$ or some $2\times 2$ minor of $DF(r)$ containing the first column is non-zero, since otherwise all $2\times 2$ minors of $DF(r)$ vanish contradicting the assumption that $R$ is nonsingular at $r$.  
In the latter case, the completion of the local ring of $R$ at $r$ is isomorphic to $k [\![t]\!]$ via a parametrization in formal power series of the form $(x_0(t),x_1(t),t)$. Since $H$ has order of contact $m$ at $r$, the power series $Ax_1(t)+Bt$ vanishes to order $m$ so $A\neq 0$ and we have
\[
x_1(t)= -\frac{B}{A} t + c_mt^m+ (\text{higher order terms})
\]
with $c_m\neq 0$.  The equation of the tangent line to $\bar R$ at $\pi_R(x_0(t),x_1(t),t,1)$ is 
\[
-(X_1-x_1(t))+(X_2-t)x_1'(t)=0 ,
\]
so the morphism $\gamma$ is given by
\[
(x_0(t),x_1(t),t,1) \longmapsto [-1, x_1'(t) , x_1(t)-tx_1'(t)] .
\]
The localization at $r$ of the fiber of $\gamma $ at $\gamma(r)$ is thus given by the ideal $I=(Ax_1'(t)+B, x_1(t)-tx_1'(t))$.  Since we have 
\begin{eqnarray*}
Ax_1'(t)+B & = & Ac_mmt^{m-1} +(\text{higher order terms}) \\[5pt]
x_1(t)-tx_1'(y) & = & c_m(1-m)t^m+(\text{higher order terms}),
\end{eqnarray*}
it follows that either ${\rm char}(k)\nmid m $ and $I=(t^{m-1})$, or ${\rm char}(k) \mid m$ and $I=(t^m)$ as we wanted to show.
\end{proof}

Let $X$ be a del Pezzo surface of degree one.  The linear system $|{-2K_X}|$ is base-point free and the image of the associated morphism $\kappa \colon X \to |{-2K_X}|^* \simeq \bP^3$ is a cone $W$ over a smooth conic; the morphism $\kappa $ is a double cover of $W$ branched at the cone vertex $w \in W$ and along a divisor $R \subset W$.  We call {\it tritangent planes of $X$} the elements of $|{-2K_X}|$ supported on exceptional curves and think of them as planes in $|{-2K_X}|^* = \bP^3$.  There are $120$ such planes  and they do not contain the vertex $w$ of the cone, since the planes containing $w$ correspond to elements of $|{-2K_X}|$ supported on the sum of two effective anticanonical divisors (see~\cite{Ko}*{Section~III.3} and~\cite{M}*{Chapter~IV} for details).

\begin{definition} \label{sweeping}
A del Pezzo surface $X$ of degree one over a field $k$ is \defi{{\free}} if any $119$ tritangent planes of $X$ span $|{-2K_X}|$.
\end{definition}

Our goal in the remainder of this section is to show that every del Pezzo surface of degree one in characteristic not two is \free; if the characteristic is two, then we only show that a general del Pezzo surface of degree one is sweeping.  In the following sections we prove that if $X$ is a sweeping del Pezzo surface of degree one, then Conjecture~\ref{C:BP} holds for $X$.

From now and until the end of the proof of Proposition~\ref{tritangenti} we assume that the characteristic of $k$ is not two.  In this case the curve $R$ is smooth and it is the complete intersection of $W$ with a cubic surface.  Hence $R$ is a canonical curve of genus four and degree six admitting a unique morphism of degree three to $\mathbb{P}^1$ (up to changes of coordinates on $\mathbb{P}^1$) obtained by projecting away from $w$.  The tritangent planes to $R$ are planes in $\mathbb{P}^3$ not containing $w$ whose intersection with $R$ is twice an effective divisor.

\begin{prop}
\label{tritangenti}
If $X$ is a del Pezzo surface of degree one over a field $k$ of characteristic different from two, then $X$ is {\free}. More precisely, any $113$ of the tritangent planes of $X$ span $|{-2}K_X|$.
\end{prop}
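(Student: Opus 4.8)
The plan is to show that the $120$ tritangent planes of $X$ span the four-dimensional space $|{-2K_X}|^* = \bP^3$, and moreover that they fail to lie on any hyperplane in $(\bP^3)^\vee$ in a strong quantitative sense — namely that no $7$ of them can fail simultaneously, so that any $120-7 = 113$ of them must span. Since a tritangent plane is a point of the dual space $(\bP^3)^\vee$, asking that $113$ of them span $|{-2K_X}|$ is the same as asking that the $120$ points of $(\bP^3)^\vee$ cut out by the tritangent planes have the property that no hyperplane (a point of $\bP^3$, i.e.\ a point $p$ of the original space) contains $8$ or more of them. So I would reformulate the statement dually: \emph{through any closed point $p \in \bP^3$ there pass at most $7$ tritangent planes of $R$.} Establishing this bound is exactly what forces any collection of $113$ tritangent planes to be linearly independent enough to span, because if some $113$ failed to span then the remaining $\leq 7$ together with a common containment relation would have to absorb all the span-failure, contradicting the bound.

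\emph{Reduction to a counting problem via the Gauss map.} Fix a closed point $p \in \bP^3$. A tritangent plane through $p$ is a plane $H \ni p$ meeting $R$ in twice an effective divisor, i.e.\ with even order of contact (at least two) at every point of $R \cap H$. I would separate two cases depending on whether $p$ lies on the cone $W$ (in particular whether $p = w$) or not, but the essential case is $p \notin R$ general, where I can project. Projecting $R$ away from $p$ gives a plane curve $\bar R = \pi_R(R) \subset \bP^2$, and a plane $H$ through $p$ tangent to $R$ at a smooth point $r$ corresponds, under the Gauss map $\gamma \colon R \dashrightarrow (\bP^2)^\vee$ of Lemma~\ref{uti}, to a tangent line of $\bar R$. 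A tritangent plane through $p$ tangent at (say) three points $r_1, r_2, r_3$ then maps to a single point of $(\bP^2)^\vee$ lying in the image of $\gamma$ with the three $r_i$ in its fiber; so bounding the number of tritangent planes through $p$ reduces to bounding, via a ramification/degree count, how the fibers of $\gamma$ can distribute tangency. This is precisely where Lemma~\ref{uti} is designed to be applied: it computes the local length of the fiber of $\gamma$ at $\gamma(r)$ in terms of the order of contact $m$, distinguishing $\mathrm{char}(k) \nmid m$ (length $m-1$) from $\mathrm{char}(k)\mid m$ (length $m$), and it isolates the exceptional locus where the tangent line to $R$ at $r$ passes through $p$.

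\emph{The main numerical estimate.} With the Gauss map in hand, I would run a degree count on $\gamma$. The curve $R$ is a canonical genus-four curve of degree six; its image $\bar R$ under projection from $p$ and the dual $\bar R^\vee$ have degrees controlled by the genus, the degree of $R$, and the class formula (Plücker-type), and the total tangency — summed over all tangent planes through $p$ with multiplicity — is captured by the degree of $\gamma$ together with the contributions of the ramification/fiber lengths from Lemma~\ref{uti}. The key point is that each tritangent plane through $p$ forces a fiber of $\gamma$ containing three points (or fewer points with higher contact, but still contributing a fixed amount to the fiber length by the lemma), so each such plane consumes a definite quantum of the total tangency budget; dividing the budget by this quantum yields the bound $7$. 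I would handle the characteristic-dividing case carefully, since there the fiber length is $m$ rather than $m-1$, which is exactly why the hypothesis $\mathrm{char}(k) \neq 2$ appears (a plane is tritangent when $m=2$ at each contact point, and $\mathrm{char}(k)\neq 2$ means $2 \nmid \mathrm{char}(k)$, keeping us in the generic branch of the lemma).

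\emph{Expected main obstacle.} The hard part will be the exceptional cases in the degree count, not the generic estimate. Specifically: (i) when the point $p$ lies on $R$ itself, or on the cone $W$, or equals the vertex $w$, the projection $\pi_R$ degenerates and the clean application of Lemma~\ref{uti} breaks down, so these special positions of $p$ must be argued separately — I expect these to actually give \emph{fewer} tritangent planes, so the bound still holds, but this requires a direct geometric argument rather than the Gauss-map count. (ii) The other delicate point is planes tangent to $R$ along a component of $R\cap H$ containing $p$ in its span, i.e.\ case (1) of Lemma~\ref{uti}, where the tangent line to $R$ passes through $p$: these do not correspond to ordinary fibers of $\gamma$ and must be enumerated by a separate (finite) count, using that $R$ is a smooth complete intersection so that the locus of such points is cut out by an explicit determinantal/incidence condition of bounded degree. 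Assembling the generic estimate with these boundary contributions to land on the sharp constant $7$ — tight enough that $120 - 7 = 113$ works — is the crux; getting a weaker bound (say $120 - \text{something large}$) is comparatively routine, but the paper's claim of $113$ demands that the tangency budget be accounted for with no slack.
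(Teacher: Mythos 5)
Your overall strategy --- dualize to counting tritangent planes through a fixed point $p\in\bP^3$, project from $p$, use the Gauss map and Lemma~\ref{uti}, and treat separately the points of $R$ whose tangent line passes through $p$ --- is indeed the strategy of the paper. But your dual reformulation is quantitatively wrong, and the error propagates through the whole sketch. A set of $113$ elements of $|{-2}K_X|\cong\bP^3$ fails to span if and only if all $113$ lie in a common hyperplane of $|{-2}K_X|$, i.e.\ if and only if the corresponding $113$ planes in $\bP^3$ all contain a common point $p$. So ``any $113$ tritangent planes span'' is equivalent to ``no point of $\bP^3$ lies on $113$ or more tritangent planes,'' that is, to the bound $k_p\le 112$ for every $p$ --- not to your claim that no point lies on $8$ or more of them. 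Your condition $k_p\le 7$ would suffice, but it is far stronger than what is needed, it is not ``the same as'' the statement, and nothing in your sketch (or in the paper) establishes it for points $p$ off the branch curve $R$. The constant $7$ that actually occurs in the paper bounds the number of tritangent planes containing a fixed \emph{tangent line} to $R$ (part~(1) of Lemma~\ref{badtritangenti}), which is a different quantity; the arithmetic $120-7=113$ is a coincidence that has led you to invert the quantifiers.

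Beyond the misstated target, the ``tangency budget'' that is supposed to produce the bound is never computed, and that is where all the work lies. The paper splits $k_p$ into $p$-singular planes (some tangency point has its tangent line through $p$) and $p$-regular planes, and proves $k_p^s\le 6\cdot 7=42$ (at most six tangent lines to $R$ through $p$, by Hurwitz applied to $\pi_R$ together with the genus formula for $\bar R$; at most seven tritangent planes per tangent line, by Hurwitz applied to the degree-four pencil of planes through that line) and $k_p^r\le 70$ (via the delta invariant $h^0(\Delta)\le h^1(\check R,\cO_{\check R})\le 136$ of the dual curve, using $\deg \check R\le 18$, Lemma~\ref{uti} to show each $p$-regular plane contributes at least $2$, and a case analysis of the generic splitting types via Hefez--Kleiman and Kaji when the Gauss map is not birational). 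The total is $112$, exactly what the statement requires and nowhere near $7$. To repair your argument you would need to correct the reduction to $k_p\le 112$ and then supply these numerical inputs; as written, the proposal does not prove the proposition.
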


\begin{proof}
Let $p \in \bP^3$ be a closed point.  We say that a tritangent plane $H$ containing $p$ is \defi{$p$-regular} (resp.\ \defi{$p$-singular}) if $p$ does not belong to (resp.\ $p$ belongs to) a tangent line to $R$ at some point in $H\cap R$; denote by $k_p$ the number of tritangent planes containing $p$ and by $k_p^r$ (resp.\ $k_p^s$) the number of $p$-regular (resp.\ $p$-singular) tritangent planes.  Suppose that the point $p$ belongs to $R$; then there are no $p$-regular tritangent planes, since every tritangent plane containing $p$ contains also the tangent line to $R$ at $p$.  Thus if $q$ is any point on the tangent line to $R$ at $p$, then we have $k_p \leq k_q^s \leq k_q$ and hence it suffices to prove the proposition assuming that $p \in \mathbb{P}^3$ is a closed point not in $R$.  Therefore let $p \in \bP^3 \setminus R$ be a closed point; projecting away from $p$ we obtain a morphism $\pi_R \colon R \to \bP^2$; let $\bar R = \pi _R(R)$.  Let $\bar\gamma\colon \bar R \dasharrow (\mathbb{P}^2)^\vee$ be the Gauss map, let $\gamma \colon R \to (\mathbb{P}^2)^\vee$ be the unique morphism that extends the composition $\bar\gamma\circ(\pi_R)$, and let $\check{R} = \gamma(R)$. The morphism $\gamma$ factors as $R \xrightarrow{g} N \xrightarrow{\nu} \check R$, where $\nu\colon N \to \check{R}$ is the normalization of $\check{R}$.  
We summarize these definitions in the following commutative diagram.
\begin{equation} \label{diatutto}
\begin{minipage}{150pt}
\xymatrix{
\bar{R} \ar@{-->}[r]^{\bar\gamma} & \check{R} \\
R \ar[u]^{\pi_R} \ar[ru]^\gamma \ar[r]^g & N \ar[u]_\nu
}
\end{minipage}
\end{equation}
The argument consists in identifying the contributions of the tritangent planes through $p$ in terms of combinatorial invariants of Diagram~\eqref{diatutto}.  More precisely, the $p$-singular tritangent planes correspond (at most $7:1$) to the points of $\bar{R}$ where the morphism $\pi _R$ is ramified; the $p$-regular tritangent planes correspond to points of $\check{R}$ where (the separable part of) $\gamma $ is ramified.

We estimate the number $k_p^s$ of $p$-singular tritangent planes in the following lemma.

\begin{lemma} \label{badtritangenti}
As above, let $R$ be the ramification divisor of $\kappa$.
\begin{enumerate}
\item \label{setri}
If $\ell \subset \mathbb{P}^3$ is a tangent line to $R$, then there are at most 7 tritangent planes containing $\ell$;
\item \label{ksi}
Let $p \in \mathbb{P}^3$ be a closed point not in $R$.  There are at most $42$ $p$-singular tritangent planes.
\end{enumerate}
\end{lemma}

\begin{proof}
\eqref{setri}
Let $r \in R$ be a point such that $\ell $ is tangent to $R$ at $r$ and let $\alpha\colon R\to \bP^1$ be the morphism obtained by considering the pencil of planes in $\bP^3$ containing $\ell $. The morphism $\alpha$ has degree at most four since every plane containing $\ell $ is tangent to $R$ at $r$ and hence intersects $R$ at $r$ with multiplicity at least two. If $\alpha$ were not separable, then the characteristic of $k$ would be three (recall that we assume ${\rm char}(k) \neq 2$), $\alpha $ would be the Frobenius morphism, and the curve $R$ would be rational, which is not the case. We deduce that $\alpha$ is separable; if $\alpha $ has degree three, then $\ell $ contains the vertex $w$ of the cone $W$ and in this case no tritangent plane contains $\ell$. Moreover, $\alpha $ cannot have degree two since $R$ is not hyperelliptic.  Thus $\alpha$ is separable and we reduce to the case in which the degree of $\alpha $ is four. If $H$ is a tritangent plane to $R$ containing $\ell $, then we have $R\cap H=2((r)+(p)+(q))$, and hence the contribution of $H$ to the ramification divisor of $\alpha$ is at least two. From the Hurwitz formula we deduce that the ramification divisor has degree $14$ and $r$ is contained in at most $\frac{14}{2} = 7$ tritangent planes.

\noindent
\eqref{ksi}  First, we reduce to the case that the morphism $\pi _R$ has degree at most two (and in particular it is separable).  Indeed, the degree of $\pi _R$ divides six, and since $R$ is not contained in a plane, the image $\bar R$ cannot be a line; therefore it suffices to analyze the cases in which the degree of $\pi _R$ equals three.  If $\pi_R$ were not separable of degree three, then it would be purely inseparable and $\bar R$ would have degree two but geometric genus four, which is not possible.  If $\pi _R$ is separable of degree three, then $p$ is the vertex $w$ of the cone $W$ and hence $k_p^s = 0$, since there are no tritangent planes through $w$. This completes the reduction. Note that if the degree of $\pi _R$ equals two, then the image of $\pi _R$ is a smooth plane cubic, since $R$ is not hyperelliptic.

Second, the morphism $\pi _R$ ramifies at every tangent line to $R$ through $p$ by~\cite{Hartshorne}*{Proposition~IV.3.4}.

Finally, the image of a ramification point of $\pi _R$ is either a singular point of $\bar{R}$ or a ramification point of the morphism induced by $\pi _R$ to the normalization of $\bar{R}$.  Using the Hurwitz formula, the formula for the arithmetic genus of a plane curve, and the equality $\deg (\bar{R}) \cdot \deg (\pi _R) = 6$, we deduce that the number of points in $\mathbb{P}^2$ corresponding to ramification points of $\pi _R$ is at most six.  Thus there are at most six lines in $\mathbb{P}^3$ containing $p$ such that every $p$-singular tritangent plane contains one of these lines.  We conclude using~\eqref{setri}.
\end{proof}

By Lemma~\ref{badtritangenti}~\eqref{ksi} we have $k_p^s\leq 42$.  Thus to prove the proposition, it suffices to show that $k_p^r \leq 70$.

Define the torsion sheaf $\Delta$ on $\check{R}$ by the exact sequence
\begin{equation}
\label{eq:delta invariant}
0 \to \mathcal{O}_{\check{R}} \to \nu_*\mathcal{O}_N \to \Delta \to 0.
\end{equation}
From the long exact cohomology sequence, we deduce that
\begin{equation}
h^1(N,\mathcal{O}_N) + h^0(\Delta) = h^1(\check{R},\mathcal{O}_{\check{R}}).
\end{equation}
Applying the Hurwitz formula to the composition of $\pi_R$ with a projection of $\bar R$ from a general point of $\mathbb{P}^2$, we obtain $\deg(\check{R}) \leq 18$, and hence $h^1(\check{R},\mathcal{O}_{\check{R}}) \leq 136$.

For each tritangent plane $H$ through $p$ denote by $\ell _H \in \check{R} \subset (\mathbb{P}^2)^\vee$ the point corresponding to the image in $\mathbb{P}^2$ of $H$ under the projection away from $p$.  We want to estimate the contribution of the point $\ell _H \in \check R$ to $h^0(\Delta)$.  Note that this contribution is at least ${\rm length}(\mathcal{O}_{\nu^{-1}(\ell _H)}) - 1$, and that
\begin{equation} \label{lengths}
{\rm length} \bigl( \mathcal{O}_{\nu^{-1}(\ell _H)} \bigr) \cdot \deg(g) ={\rm length} \bigl( \mathcal{O}_{\gamma^{-1}(\ell _H)} \bigr) .
\end{equation}

Factor $g$ as the composition of a power of the Frobenius morphism, followed by a separable morphism $g _s$.  Let $d_i$ be the inseparable degree of $g$ and $d_s = \deg (g_s)$ be the separable degree of $g$, let also $\check{d}$ denote the degree of $\check{R}$; recall that we have $d_i d_s \check{d} \leq 18$.  

Suppose first that $d_i d_s = 1$.  By Lemma~\ref{uti} and~\eqref{lengths} each $p$-regular tritangent plane contributes at least $2$ to $h^0(\Delta)$; hence we have $h^0(\Delta) \geq 2 k_p^r$, and we conclude that $k_p^r \leq 68$.

Suppose now that $d_i d_s > 1$ and hence we have $\check{d} \leq 9$.  By the Hurwitz formula, the degree of the ramification divisor of the morphism $g _s$ is at most $6+2 \deg (g _s) \leq 6+36 = 42$.  The curve $\check{R}$ has degree at most $9$, and hence it has at most $28$ singular points.

Let $U \subset N \times \mathbb{P}^3$ be the universal family of planes through $p$ tangent to $R$ and let $Z := (U \cap N \times R)$.  Thus $Z$ defines a family of closed subschemes of $R \subset \mathbb{P}^3$ of dimension zero and degree six.  The geometric generic fiber of $Z \to N$ therefore determines a partition of $6$ that we call the generic splitting type.  Alternatively, if $H$ is a plane through $p$ tangent to $R$, then $H \cap R$ determines an effective divisor of degree six on $R$; if $H$ is general with the required properties, then the multiplicities of the geometric points of $H \cap R$ are the generic splitting type.

By~\cite{HefezKleiman}*{Theorem~3.5} and \cite{Kaji}*{p.\ 529}, the generic splitting type of $\gamma$ is of the form $(a,\dots,a,1,\dots,1)$, where either $a=2$ and the morphism is separable or $a$ is the inseparable degree of the rational map $\bar R \to \check{R}$.  The possibilities are
\[
(5,1) \;,\, (4,1,1) \;,\, (3,3) \;,\, (3,1,1,1) \;,\, (2,2,2) \;,\, (2,2,1,1) \;,\, (2,1,1,1,1).
\]
If the generic splitting type is $(2,2,2)$, then the point $p$ is the cone vertex $w$ and in this case $k_p = 0$.  The partition $(2,1,1,1,1)$ corresponds to the case $d_i d_s = 1$ and we already analyzed it.  The partitions $(5,1)$ and $(4,1,1)$ can also be excluded, since they imply that $R$ is birational to a plane curve of degree at most $\lfloor \frac{18}{4} \rfloor = 4$ and hence cannot have arithmetic genus four.

Examining the remaining generic splitting types and using Lemma~\ref{uti}, we deduce that the $p$-regular tritangent planes correspond to ramification points of $g _s$.  In particular there are at most $42+28=70$ such points, and the proposition follows. 
\end{proof}

\begin{remark} \label{c2gen}
In characteristic two the curve $R$ above has degree $3$, contains the cone vertex $w$, and is
not necessarily smooth (see~\cite{CO}).  While we are not able to
prove that every del Pezzo surface of degree one in characteristic two is {\free}, we prove it for a general such
surface.  To show this note that the property of being {\free} is
open; thus it suffices to exhibit a single example of a del Pezzo
surface $X$ of degree one defined over $\overline{\mathbb{F}}_2$ and five
tritangent planes to $X$ no four of which share a common point.  We found this example using {\tt Magma}~\cite{Magma}.

Let $\mathbb{P} := \mathbb{P}(1,1,2,3)$ be the weighted projective space over $\overline{\mathbb{F}}_2$
with coordinates $x,y,z,w$ and respective weights $1,1,2,3$.  Let $X
\subset \mathbb{P}$ be the surface defined by
\[
F \colon \bigr\{ w^2 + z^3 + wx z + wy^3 + x^6 = 0 \bigr\}.
\]
It is immediate to check that $X$ is smooth.  Let $t \in \overline{\mathbb{F}}_2$ be an
element satisfying ${t^2+t+1=0}$; any four of the homogeneous forms
\[
\begin{array}{l@{\hspace{25pt}}l}
z + x^2 , & z + t x^2 + t^2 xy + y^2, \\
z + y^2 , & z + t x^2 + xy + t^2 y^2 , \\
z + t x^2
\end{array}
\]
are linearly independent.  Moreover, they represent tritangent
planes since we have
\[
\begin{array}{rcl}
F(x,y,x^2,w) & = & w (w+x^3+y^3) \\
F(x,y,y^2,w) & = & \bigl( w+(x^3+txy^2+x^2y+ty^3) \bigr) \\
             &   &\quad\cdot\bigl( w+(x^3+t^2xy^2+x^2y+t^2y^3) \bigr) \\
F(x,y,tx^2,w) & = & w (w+tx^3+y^3) \\
F(x,y,t x^2 + t^2 xy + y^2,w) & = & \bigl( w + y (x^2 + xy + ty^2)
\bigr) \bigl( w + (tx^3 + t^2y^3 + tx^2y) \bigr) \\
F(x,y,t x^2 + xy + t^2 y^2,w) & = & \bigl( w + ty (x^2 + txy + y^2)
\bigr) \bigl( w + t (x^3 + tx^2y + ty^3) \bigr)
\end{array}
\]
and we deduce that $X$ is {\free}.
\end{remark}

The proof of Corollary~\ref{Popovgenericity} below given in~\cite{BP}*{Proposition~3.4} contains a gap pointed out in~\cite{Popov}: the original 
argument implicitly assumes that the characteristic of the base field is not two, and reduces the proof to the fact that $\HX{0}{-2K_X}$ is spanned by 
the sections supported on exceptional curves.  Proposition~\ref{tritangenti} fixes this gap.

\begin{cor} \label{Popovgenericity}
Let $X$ be a del Pezzo surface of degree one in characteristic not two. If $D\neq-K_X$ is an effective divisor, then $\HX{0}{D}$ is spanned by global sections supported on exceptional curves.  \qed
\end{cor}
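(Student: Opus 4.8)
The plan is to leverage Proposition~\ref{tritangenti} together with the fact, due to Batyrev and Popov~\cite{BP}*{Theorem~3.2}, that $\Cox(X)$ is generated in anticanonical degree one by the elements of $\cG = \cC \cup \{K_1, K_2\}$. Consequently $\HX{0}{D}$ is spanned by monomials in these generators, each of which has the shape $k_1^a k_2^b s$, where $s$ is a monomial in the exceptional-curve variables, and hence a section supported on exceptional curves. The entire argument consists in showing that every such monomial can be rewritten as a $k$-linear combination of pure exceptional monomials; that is, that the factors $k_1, k_2$ can always be removed when $D \ne -K_X$.

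First I would remove the $k$-factors two at a time. Whenever $a + b \ge 2$ a monomial contains a factor $k_i k_j \in \HX{0}{-2K_X}$; since $X$ is sweeping (Proposition~\ref{tritangenti}), the tritangent planes span $|{-2K_X}|$, so $\HX{0}{-2K_X}$ is spanned by sections supported on exceptional curves, each of the form $e\,e'$ with $E + E' = -2K_X$. Rewriting $k_i k_j$ in this spanning set replaces the monomial by a combination of monomials with two fewer $k$-factors. Iterating reduces us to the case $a + b \le 1$. The case $a + b = 0$ is exactly the desired conclusion, so only the case of a single leftover factor $k_i$ remains.

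The main obstacle is therefore the monomial $k_i\, s$ with $s$ an exceptional monomial of class $D + K_X$. Because $D \ne -K_X$, the class $D + K_X$ is effective and nonzero, so $s = e\cdot s'$ with $e$ an exceptional-curve variable, say of class $E$; it then suffices to rewrite the single product $k_i\, e \in \HX{0}{-K_X + E}$ in terms of exceptional sections. Here I would contract $E$ by a morphism $b\colon X \to Y$ onto a del Pezzo surface $Y$ of degree two and use $-K_X + E = b^{*}(-K_Y)$, so that $\HX{0}{-K_X+E} \cong \HY{0}{-K_Y}$. By Lemma~\ref{gene2} the latter space is spanned by reducible anticanonical divisors on $Y$, each a sum $F + F'$ of two exceptional curves. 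Since $X$ is a del Pezzo surface, the blown-up point $b(E)$ lies on no $(-1)$-curve of $Y$ (otherwise its strict transform would be a $(-2)$-curve on $X$), so each of $F, F'$ has exceptional strict transform on $X$; thus these reducible divisors pull back to sums of two exceptional curves on $X$. This exhibits $k_i e$, and hence $k_i s$, as a combination of exceptional monomials of class $D$, completing the argument.

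I expect the delicate point to be this single-factor case: one must verify that $D + K_X$ is genuinely a nonzero effective class (which is precisely where the hypothesis $D \ne -K_X$ enters, accounting for the excluded degree) and that the reducible members of $|{-K_Y}|$ supplied by Lemma~\ref{gene2} pull back to honest products of exceptional-curve sections rather than to sections with non-exceptional support. The pair-reduction step, by contrast, is immediate once Proposition~\ref{tritangenti} is available, which is why the real content of the corollary is concentrated in the sweeping property and the degree-two blow-down.
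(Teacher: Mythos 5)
Your argument is correct and follows the same route the paper intends: the corollary carries no proof because the reduction to the spanning of $\HX{0}{-2K_X}$ (and of $\HX{0}{-K_X+E}$) by sections supported on exceptional curves is delegated to \cite{BP}*{Proposition~3.4}, with Proposition~\ref{tritangenti} supplying exactly the missing input. Your explicit reconstruction of that reduction --- using the degree-one generation of $\Cox(X)$ from \cite{BP}*{Theorem~3.2}, stripping the $k_i$ factors in pairs via Proposition~\ref{tritangenti}, and removing a single leftover $k_i$ via Lemma~\ref{gene2} applied to the degree-two blow-down --- is precisely the intended argument, including the observation that the blown-up point lies on no exceptional curve of $Y$.
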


\begin{remark} \label{PoC2}
A general del Pezzo surface of degree one in characteristic two is {\free}; therefore Corollary~\ref{Popovgenericity} also holds for such surfaces.
\end{remark}

\section{Betti numbers and the Batyrev--Popov conjecture}
\label{S:betti}

We review the approach of~\cite{LV} to study the ideal of relations of Cox rings. As usual, let $X$ be a del Pezzo surface, and let $R = k[\cG]$. Since $R$ is positively graded (by 
anticanonical degree), every  finitely generated $\Pic(X)$-graded $R$-module has a unique minimal 
$\Pic(X)$-graded free resolution. For the module $\Cox(X) = R/I_X$ this resolution 
is of the form
\[
\dots \rightarrow \bigoplus_{D\in \Pic(X)} R(-D)^{b_{2,D}}\rightarrow \bigoplus_{D\in \Pic(X)} R(-D)^{b_{1,D}} \rightarrow R \rightarrow 0,
\]
where the rightmost non-zero map is given by a row matrix whose non-zero entries are a set of minimal generators of the ideal $I_X$. Since the differential of the resolution has degree $0$, it follows that $I_X$ has exactly $b_{1,D}(\Cox(X))$ minimal generators of Picard degree $D$.

Let $\mathbb{K}$ be the Koszul complex on $\cG$. Consider the degree $D$ 
part of the complex $\Cox(X)\otimes_R \mathbb{K}$. Then
\[
{\rm H}_i \big( (\Cox(X)\otimes_R \mathbb{K})_D \big) = \big( {\rm H}_i(\Cox(X)\otimes_R \mathbb{K}) \big)_D =\big( \Tor^R_i (\Cox(X),k) \big)_D = k^{b_{i,D}(\Cox(X))},
\]
where the last two equalities follow since $\Tor_i^R(A,B)$ is symmetric in $A$ and $B$ and the Koszul complex is the minimal free resolution of $k$ over $R$.  Hence we have the equality
\[
\dim_k \big( {\rm H}_i\big( \bigl(\Cox(X) \otimes_R \mathbb{K}\bigr)_D \big) \big) = b_{i,D}(\Cox(X)).
\]
Thus, Conjecture~\ref{C:BP} is equivalent to the statement that $b_{1,D}(\rm{Cox}(X))=0$ for all $D\in \Pic(X)$ with $-K_X\cdot D\geq 3$. This is the form of the conjecture that we prove.

Let $X$ be a del Pezzo surface of degree one.  
To compute the Betti numbers $b_{1,D}(\Cox(X))$, denote by $C_1 , \ldots , C_{240}$ the exceptional curves of $X$, and let $C_{240 + i} := K_i$ for $i \in \{1, 2\}$. With this notation, the part of the complex relevant to our task is
\begin{equation*}
\bigoplus_{1\leq i<j\leq 242} {\rm H}^0 \big( X, \cO_X(D-C_i-C_j) \big) \xrightarrow{d_2} \bigoplus_{i=1}^{242} {\rm H}^0 \big( X, \cO_X(D-C_i) \big) \xrightarrow{d_1} {\rm H}^0 \big( X, \cO_X(D)\big),
\end{equation*}
where $d_2$ sends $\sigma_{ij} \in {\rm H}^0 \big( X, \cO_X(D-C_i-C_j) \big)$ to $(0,\dots,0,\sigma_{ij}c_j,0,\dots,0,-\sigma_{ij}c_i,0,\dots,0)$ and $d_1$ sends $\sigma_i \in {\rm H}^0 \big( X, \cO_X(D-C_i) \big)$ to $\sigma_ic_i$.

A \defi{cycle} is an element of $\ker d_1$; a \defi{boundary} is an element of ${\rm im}\, d_2$. The \defi{support} of a cycle $\sigma = ( \ldots , \sigma_i , \ldots )$ 
is 
\[
|| \sigma ||= \big\{ C_i :  \sigma_i\neq 0 \big\}
\]
and the size of the support is the cardinality of $|| \sigma ||$, denoted  $| \sigma |$.

\subsection{Strategy}
In order to show that $b_{1,D}(\Cox(X)) = 0$ whenever $-K_X\cdot D \geq 3$, we split the divisor classes $D$ as follows:
\begin{enumerate}
\item $D$ is ample and has anticanonical degree at least four (\S\ref{S:ample four});
\item $D$ is ample and has anticanonical degree three (\S\ref{S:ample three});
\item $D$ is not ample (\S\ref{S:non ample divisors}).
\end{enumerate}

The case when $D$ is not ample follows by induction on the degree of the del Pezzo surface, and the ample divisors of anticanonical degree three are dealt with algebraically. In order to show that $b_{1,D}(\Cox(X)) = 0$ whenever $D$ is ample and $-K_X \cdot D \geq 4$ we show that every cycle is a boundary by arguing as follows.
\begin{enumerate}
\item \label{qui1}
We describe ways in which a divisor may be removed from the support of a cycle using boundaries, at the cost of possibly introducing new divisors in the support of the cycle.
\item We apply the constructions in (\ref{qui1}) to all cycles in a systematic way to reduce their support 
to at most two elements.
\end{enumerate}

We conclude using the following lemma.

\begin{lemma} 
\label{lem:easy} 
Any cycle $\sigma$ with $|\sigma|\leq 2$ is a boundary.
\end{lemma}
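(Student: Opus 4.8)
The plan is to exploit two structural facts about $\Cox(X)$: it is an integral domain, and distinct generators in $\cG$ are pairwise non-associate primes. The domain property holds because $X$ is integral, so every nonzero section embeds into $k(X)$ and a product of nonzero sections is again nonzero. For the second fact, recall that $c_i$ is the canonical section cutting out the prime divisor $C_i$ whenever $C_i$ is an exceptional curve; choosing the two anticanonical generators $K_1,K_2 \in |{-K_X}|$ to be irreducible (a general member of $|{-K_X}|$ is a smooth genus-one curve, so this is possible), every generator in $\cG$ becomes a prime element of $\Cox(X)$ whose associated prime divisor is distinct from those of the others. Granting these, I would split the argument according to the size $|\sigma|$ of the support.

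If $|\sigma| = 0$ then $\sigma = 0$ is trivially a boundary. If $|\sigma| = 1$, say $\sigma_i \neq 0$ and all other components vanish, then $d_1(\sigma) = \sigma_i c_i = 0$; since $\Cox(X)$ is a domain and $c_i \neq 0$, this forces $\sigma_i = 0$, a contradiction. Hence no cycle has support of size exactly one, and this case is vacuous.

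The only substantial case is $|\sigma| = 2$, say $||\sigma|| = \{C_i, C_j\}$ with $i < j$. Here $d_1(\sigma) = 0$ reads $\sigma_i c_i + \sigma_j c_j = 0$ in $\Cox(X)$. Since $c_j$ is prime, divides $\sigma_i c_i$, and does not divide the non-associate prime $c_i$, I conclude $c_j \mid \sigma_i$, say $\sigma_i = \tau c_j$ with $\tau \in \HX{0}{D - C_i - C_j}$. Substituting gives $(\tau c_i + \sigma_j)c_j = 0$, and cancelling $c_j$ in the domain yields $\sigma_j = -\tau c_i$. Placing $\tau$ in the $(i,j)$-coordinate of the source of $d_2$ and zero elsewhere, the definition of $d_2$ returns the vector with $\tau c_j = \sigma_i$ in slot $i$ and $-\tau c_i = \sigma_j$ in slot $j$ and zeros elsewhere, which is exactly $\sigma$. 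Thus $\sigma$ is a boundary.

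Equivalently, one can avoid invoking primality and argue with orders of vanishing: the relation $\sigma_i c_i = -\sigma_j c_j$ says the two sections of $\cO_X(D)$ have equal divisors of zeros, so ${\rm div}(\sigma_i) + C_i = {\rm div}(\sigma_j) + C_j$; comparing the coefficient of the irreducible divisor $C_j$ on both sides, and using $C_i \neq C_j$, gives ${\rm ord}_{C_j}(\sigma_i) = {\rm ord}_{C_j}(\sigma_j) + 1 \geq 1$, which again produces the required factorization $\sigma_i = \tau c_j$. The single point needing care---and the only mild obstacle---is precisely this coprimality, i.e. that each generator cuts out a genuinely irreducible divisor and that distinct generators cut out distinct ones; this is immediate for the exceptional curves and is arranged for $K_1, K_2$ by taking them irreducible.
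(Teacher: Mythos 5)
Your proof is correct, and its core mechanism --- showing that a two-term relation $\sigma_i c_i + \sigma_j c_j = 0$ forces $\sigma_i = \tau c_j$ and $\sigma_j = -\tau c_i$ for some $\tau \in \HX{0}{D-C_i-C_j}$, so that $\sigma$ lies in the image of $d_2$ --- is exactly the ``immediate'' deduction the paper has in mind. The difference lies in how the required coprimality is justified: the paper simply cites \cite{EKW}*{Theorem~1.1} to assert that $\Cox(X)$ is a unique factorization domain and leaves the divisibility argument to the reader, whereas you verify directly that the generators in $\cG$ are pairwise non-associate primes by comparing divisors of zeros. Your route is more self-contained and uses strictly less than the full UFD property (only the primality of the specific generators $c_i$, which your order-of-vanishing computation establishes cleanly). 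One small remark: your caveat about choosing $K_1,K_2$ irreducible is unnecessary --- on a del Pezzo surface of degree one every member of $|{-K}_X|$ is irreducible and reduced, since a nontrivial effective decomposition $-K_X = A+B$ would give $1 = (-K_X)\cdot A + (-K_X)\cdot B \geq 2$ by ampleness of $-K_X$; so the hypothesis you ``arrange'' holds automatically, and in higher degree all generators are exceptional curves anyway.
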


\begin{proof}
We deduce from~\cite{EKW}*{Theorem~1.1} that the Cox rings of del Pezzo surfaces are unique factorization domains. The statement follows immediately.
\end{proof}

\section{Capturability and Stopping criteria}
\label{S:cap and stop}

We introduce the following terminology to aide us in our search of ways to reduce the support of a cycle. We keep the notation of the previous section.

\begin{definition}
A \defi{capture move for a divisor $D$} is a pair $(\cS , C )$, where $\cS \subset \cG$, $C \in \cG \setminus \cS$ 
and the map
\[
\bigoplus_{S \in \cS} {\rm H}^0 \big( X, \cO_X(D - S - C) \big) \otimes {\rm H}^0\big ( X, \cO_X(S) \big) \longrightarrow {\rm H}^0 \big( X, \cO_X(D - C) \big)
\] 
induced by tensor product of sections is surjective. We say that $C$ is the \defi{captured curve}, and that $C$ is \defi{capturable for $D$ by} $\cS$.
\end{definition}

Let $T$ be a set; for a direct sum $\bigoplus_{t \in T} V_t$ of vector spaces indexed by $T$ and $T'\subset T$, we write $\sum_{t \in T'} a_t \hat{e}_t$ for the element $(b_t)_{t\in T}$ of $\bigoplus_{t \in T} V_t$ where $b_t=a_t$ if $t\in T'$ and $b_t=0$ otherwise.  As usual, if $S \in \cG$, we denote by $s$ the chosen global section of $\mathcal{O}_X(S)$.   Let $( \cS , C )$ be a capture move for $D$, let $\sigma_C \in {\rm H}^0 \big( X, \cO_X(D - C) \big)$, and let $\sigma \in \bigoplus {\rm H}^0 \big( X, \cO_X(D - C_i) \big)$ be the element corresponding to $\sigma _C$.  Then we have
\[
\sigma_ C = \sum_{S \in \cS} p_s s, \qquad p_s \in {\rm H}^0 \big( X, \cO_X(D - S - C) \big),
\]
and thus we obtain
\[
\sigma = \sum_{S \in \cS} p_s c\, \hat{e}_s  + d_2 \Bigl( \sum_{S \in \cS} \varepsilon _{CS} p_s \hat{e}_{cs} \Bigr) \in \bigoplus_{C_i}  {\rm H}^0 \big( X, \cO_X(D - C_i) \big)
\]
where $\varepsilon _{CS} \in \{ \pm 1 \}$.  

Hence, if $\sigma$ is a cycle and there is a capture move $( \cS , C )$, then we can modify 
$\sigma$ by a boundary so that $C \notin ||\sigma ||$. In this sense we have {\it captured} the 
curve $C$ from the support of $\sigma $.  Note, however, that we may have 
added $S$ to $|| \sigma ||$, for all $S \in \cS$, so \emph{a priori} the size of the support may 
not have decreased. We need to find and apply capture moves in an organized way to 
ensure that we are genuinely decreasing the size of the support of a cycle.

Finally note that if $(\cS , C )$ is a capture move and $\cS' \supset \cS$ with $C \notin \cS'$, 
then also $(\cS' , C )$ is a capture move.  We frequently use this observation without 
explicitly mentioning it.

\begin{lemma}
\label{captureC}
Let $A$, $B$ and $C$ be exceptional curves on $X$, $A$ and $B$ disjoint, and let $D$ be any divisor.  
If $\HX{1}{D-A-B-C}=0$, then $(\{A,B\},C)$ is a capture move for $D$. In particular, if $({-K}_X+D-A-B-C)$ is nef, and if either it is big or it has anticanonical degree two, then $(\{A,B\},C)$ is a capture move for $D$.
\end{lemma}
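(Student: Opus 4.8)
The plan is to reduce the surjectivity demanded by the definition of a capture move to the stated cohomological vanishing, and then to supply that vanishing. First I would observe that since $A$ and $B$ are exceptional curves, $\HX{0}{A}$ and $\HX{0}{B}$ are one-dimensional, spanned by the sections $a$ and $b$ vanishing on $A$ and $B$. Writing $D' := D-C$, the map defining the capture move $(\{A,B\},C)$ therefore becomes the multiplication map
\[
\HX{0}{D'-A} \oplus \HX{0}{D'-B} \longrightarrow \HX{0}{D'}, \qquad (\sigma,\tau) \longmapsto \sigma a + \tau b,
\]
whose image is $a\,\HX{0}{D'-A} + b\,\HX{0}{D'-B}$; the task is to show this sum fills $\HX{0}{D'}$.

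Next I would exploit that $A$ and $B$ are disjoint, so that $A+B$ is a disjoint union of curves and the restriction $\cO_{A+B}$ splits as $\cO_A \oplus \cO_B$. The restriction sequence
\[
0 \to \cO_X(D'-A-B) \to \cO_X(D') \to \cO_A(D') \oplus \cO_B(D') \to 0
\]
gives on global sections a restriction map $\rho = (\rho_A,\rho_B)$, and the hypothesis $\HX{1}{D-A-B-C} = \HX{1}{D'-A-B} = 0$ forces $\rho$ to be surjective via the long exact sequence. The single-curve restriction sequences identify $\ker \rho_A = a\,\HX{0}{D'-A}$ and $\ker \rho_B = b\,\HX{0}{D'-B}$. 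A short diagram chase then finishes: given $v \in \HX{0}{D'}$, surjectivity of $\rho$ together with the genuine direct-sum splitting of the target (from disjointness) lets me choose $w$ with $\rho_A(w) = \rho_A(v)$ and $\rho_B(w) = 0$; then $w \in \ker \rho_B$, $v-w \in \ker \rho_A$, and $v = (v-w)+w$ lies in $a\,\HX{0}{D'-A} + b\,\HX{0}{D'-B}$, proving the first assertion.

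For the ``in particular'' clause I would set $N := -K_X + D - A - B - C$, so that $D-A-B-C = K_X + N$, and establish $\HX{1}{K_X+N} = 0$. If $N$ is nef and big this is the Kawamata--Viehweg vanishing theorem, which holds in every characteristic for rational surfaces as noted after Theorem~\ref{T:main theorem}. If $N$ is nef of anticanonical degree two but not big, then by the description of the nef cone following Corollary~\ref{maquale} the divisor $N$ is a conic $Q$; here I would argue directly from $0 \to \cO_X(-Q) \to \cO_X \to \cO_Q \to 0$, using ${\rm H}^1(X,\cO_X) = 0$ (rationality of $X$) together with $h^0(\cO_Q) = 1$ and $h^1(\cO_Q) = 0$ (the conic has arithmetic genus zero) to deduce $\HX{1}{-Q} = 0$, and then Serre duality to conclude $\HX{1}{K_X+Q} = 0$.

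The step I expect to be the main obstacle is this last boundary case: when $N$ has anticanonical degree two and fails to be big, Kawamata--Viehweg does not apply, so the vanishing must be obtained by hand through the conic-bundle geometry. Everything else is essentially formal once the disjointness of $A$ and $B$ has been used to split the restriction to $A+B$.
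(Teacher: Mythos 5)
Your proposal is correct and follows essentially the same route as the paper: the paper twists the Koszul sequence $0\to\cO_X(-A-B)\to\cO_X(-A)\oplus\cO_X(-B)\to\cO_X\to 0$ by $D-C$ and reads the surjectivity off the long exact sequence, which is just a repackaging of your restriction-to-$A+B$ sequence and diagram chase, and it likewise invokes Kawamata--Viehweg in the big case and a direct conic computation (via $0\to\cO_X(K_X)\to\cO_X(K_X+Q)\to\cO_Q(K_X+Q)\to 0$, Serre-dual to your $\HX{1}{-Q}=0$ argument) in the non-big case. No gaps.
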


\begin{proof}
Since $A$ and $B$ are disjoint, there is an exact sequence of sheaves
\[
0\rightarrow \cO_X(-A-B)\rightarrow \cO_X(-A)\oplus\cO_X(-B)\rightarrow O_X\rightarrow 0 .
\]
Tensoring with $\cO_X(D-C)$ we obtain the short exact sequence
\[
0\rightarrow \cO_X(D-A-B-C)\rightarrow \cO_X(D-A-C)\oplus\cO_X(D-B-C)\rightarrow O_X(D-C)\rightarrow 0
\]
and the desired surjectivity follows from the associated long exact sequence in cohomology and the assumption that $\HX{1}{D-A-B-C}=0$. The last statement follows from the Kawamata-Viehweg vanishing theorem when ${-K}_X+D-A-B-C$ is nef and big. Otherwise ${-K}_X+D-A-B-C = Q$ for some conic $Q$ and the statement follows from the exact sequence 
\[
0\rightarrow \cO_X(K_X)\rightarrow \cO_X(K_X+Q)\rightarrow \cO_{Q}(K_X+Q)\rightarrow 0,
\]
by considering the associated long exact sequence in cohomology.
\end{proof}

\begin{lemma}
\label{captureK}
If $X$ is {\free}, then the pairs $(\cC,K_1)$ and $(\cC,K_2)$ are capture moves for all $D\neq -2K_X$.
\end{lemma}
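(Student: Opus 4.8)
The plan is to establish the statement for the pair $(\cC,K_1)$; the argument for $(\cC,K_2)$ is identical, since $K_1$ and $K_2$ enter only through their common class $-K_X$, and in both cases the captured curve $C=K_i$ lies in $\cG\setminus\cC$ as required. First I would unwind the definition of a capture move. For each exceptional curve $S\in\cC$ the space ${\rm H}^0(X,\cO_X(S))$ is one-dimensional, spanned by $s$, so the map under consideration has image
\[
\Bigl\{ \sum_{S\in\cC} q_S\, s \ :\ q_S\in {\rm H}^0\bigl(X,\cO_X(D-S-K_1)\bigr) \Bigr\} \subseteq {\rm H}^0\bigl(X,\cO_X(D-K_1)\bigr).
\]
Thus proving that $(\cC,K_1)$ is a capture move amounts to showing that every global section of $\cO_X(D-K_1)$ is a sum of sections, each divisible by the section of some exceptional curve, equivalently each vanishing along an exceptional curve.

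Set $D':=D-K_1$; because $K_1\in|{-K}_X|$, the class of $D'$ is $D+K_X$. If $D'$ is not effective then the target ${\rm H}^0(X,\cO_X(D'))$ vanishes and there is nothing to prove, so I would assume $D'$ effective. The hypothesis $D\neq-2K_X$ says precisely that $D'\neq -K_X$. This is the point at which the sweeping hypothesis is used: it is exactly the input needed for Corollary~\ref{Popovgenericity} (through Remark~\ref{PoC2} when ${\rm char}(k)=2$), which then guarantees that ${\rm H}^0(X,\cO_X(D'))$ is spanned by global sections supported on exceptional curves.

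To conclude, let $\sigma$ be such a spanning section, with ${\rm div}(\sigma)=\sum_i n_i E_i$ a nonnegative combination of exceptional curves $E_i\in\cC$. As long as $D'\not\sim 0$ the divisor ${\rm div}(\sigma)$ is nonzero, so some $n_i\geq 1$; then $\tau:=\sigma/e_i$ has divisor ${\rm div}(\sigma)-E_i\geq 0$ and so is a genuine section of $\cO_X(D'-E_i)$, giving $\sigma=e_i\tau$ in the image (in the summand $S=E_i$). Since a spanning set lies in the image, the map is surjective. The sole degenerate case is $D'\sim 0$, i.e.\ $D=-K_X$, where the constant section is divisible by no $e_i$; but this divisor has anticanonical degree one and falls outside the range $-K_X\cdot D\geq 3$ relevant to Conjecture~\ref{C:BP}.

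I expect the only genuine obstacle to be the invocation of Corollary~\ref{Popovgenericity}: the entire weight of the lemma rests on the fact that sections of $\cO_X(D')$ can be taken supported on exceptional curves, which is the nontrivial geometric content furnished by the sweeping hypothesis (and, in characteristic two, available only for general surfaces). Once that reduction is granted, the divisibility of such sections by an exceptional generator, and hence the surjectivity, is essentially formal.
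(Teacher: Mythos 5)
Your proof is correct and follows the same route as the paper's: reduce to the statement that ${\rm H}^0\big(X,\cO_X(D+K_X)\big)$ is spanned by global sections supported on exceptional curves, which is exactly Corollary~\ref{Popovgenericity} together with Remark~\ref{PoC2} in characteristic two, and then observe that any such section is divisible by some $e_i$ and hence lies in the image of the multiplication map. Your extra remark about the degenerate case $D=-K_X$ (where $D+K_X\sim 0$ and the constant section is divisible by no $e_i$, so the map $0\to k$ is not surjective) is a genuine edge case that the paper's one-line proof passes over in silence; it is harmless in context, since the lemma is only ever invoked for divisors of anticanonical degree at least three.
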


\begin{proof}
Since $D+K_X \neq -K_X$, Corollary~\ref{Popovgenericity} and Remark~\ref{PoC2} show that $\HX{0}{D+K_X}$ 
is spanned by global sections supported on exceptional curves. Thus the  following map 
is surjective
\[
\bigoplus_{S \in \cC} {\rm H}^0 \big( X, \cO_X(D - S +K_X) \big) \otimes {\rm H}^0\big ( X, \cO_X(S) \big) \longrightarrow {\rm H}^0 \big( X, \cO_X(D + K_X) \big)
\] 
and $(\cC, K_1)$ and $(\cC,K_2)$ are capture moves for $D$.
\end{proof}

\begin{definition}
A \defi{stopping criterion for a divisor $D$} is a set $\cS \subset \cG$ such that the complex
\[
\bigoplus_{\substack{C_i, C_j \in \cS \\ i < j}} {\rm H}^0 \big( X, \cO_X(D-C_i-C_j) \big) \xrightarrow{d_2} \bigoplus_{C_i \in \cS} {\rm H}^0 \big( X, \cO_X(D-C_i) \big) \xrightarrow{d_1} {\rm H}^0 \big( X, \cO_X(D)\big)
\]
is exact.
\end{definition}

\begin{remark}
By Lemma~\ref{lem:easy}, a subset of $\cG$ of cardinality two is a stopping criterion for any divisor $D$.
\end{remark}

The name stopping criterion is motivated by Theorem~\ref{thm:games}: whenever we can capture all curves in a given degree $D$ by curves contained in a stopping criterion, 
then there are no relations in degree $D$.  In this case, we may {\it stop} looking for capture moves.

\begin{definition}
Let $D \in \Pic(X)$ be a divisor class and let 
\[
\cM := \big( E_1 , \dots , E_n \big)
\]
be a sequence of elements of $\cG$; define $\cS_i := \cG \setminus \{ E_1 , E_2 , \ldots , E_i \}$ 
for all $i \in \{ 1 , \ldots , n \}$.  We say that $D$ is \defi{capturable} (by $\cM$) if 
\begin{enumerate}
\item $( \cS_i , E_i )$ is a capture move for $D$ for all $i \in \{ 1 , \ldots , n \}$, 
\item $\cS_n$ is a stopping criterion for $D$.
\end{enumerate}
\end{definition}

\begin{theorem}
\label{thm:games} 
Fix a divisor class $D \in \Pic(X)$. If $D$ is capturable then 
\[
b_{1,D}(\Cox(X))=0
\]
and hence there are no minimal generators of the ideal $I_X$ in degree $D$.
\end{theorem}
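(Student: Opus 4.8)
The plan is to prove the equivalent statement that $\ker d_1 = {\rm im}\, d_2$, since by the discussion preceding the theorem we have $b_{1,D}(\Cox(X)) = \dim_k(\ker d_1/{\rm im}\, d_2)$, and the claim about minimal generators of $I_X$ follows at once from the identification of $b_{1,D}$ with the number of such generators in degree $D$. So I would fix an arbitrary cycle $\sigma \in \ker d_1$ and aim to exhibit it as a boundary.

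First I would run an induction that peels off the curves $E_1,\dots,E_n$ of the capturing sequence $\cM$ one at a time. The inductive claim, for $i \in \{0,1,\dots,n\}$, is that there is a cycle $\sigma^{(i)}$ congruent to $\sigma$ modulo ${\rm im}\, d_2$ whose support satisfies $\|\sigma^{(i)}\| \subseteq \cS_i = \cG \setminus \{E_1,\dots,E_i\}$. The base case $i=0$ is $\sigma^{(0)} = \sigma$, with $\cS_0 = \cG$. For the inductive step I would invoke the explicit capture procedure recorded immediately after the definition of a capture move: since $(\cS_i,E_i)$ is a capture move for $D$, the $E_i$-component of $\sigma^{(i-1)}$ can be written as $\sum_{S \in \cS_i} p_s s$, and this term can be traded, up to a boundary $d_2(\cdots)$, for contributions supported on $\cS_i$. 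Setting $\sigma^{(i)}$ to be $\sigma^{(i-1)}$ minus that boundary deletes $E_i$ from the support while introducing only curves lying in $\cS_i$.

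The crucial bookkeeping point, and the only subtlety I anticipate, is that the sets $\cS_i$ are nested, $\cG = \cS_0 \supseteq \cS_1 \supseteq \cdots \supseteq \cS_n$, with $\cS_i = \cS_{i-1} \setminus \{E_i\}$. This nesting is exactly what guarantees that the move $(\cS_i,E_i)$ never resurrects a previously captured curve $E_1,\dots,E_{i-1}$, since all newly introduced curves lie in $\cS_i$, which excludes them. Concretely, combining $\|\sigma^{(i-1)}\| \subseteq \cS_{i-1}$ with the fact that the capture removes $E_i$ and adds only curves in $\cS_i \subseteq \cS_{i-1}$ yields $\|\sigma^{(i)}\| \subseteq \cS_{i-1}\setminus\{E_i\} = \cS_i$, closing the induction. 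I would emphasize that this monotone structure is the whole reason the support genuinely shrinks, despite the warning that an individual capture move may enlarge the support.

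Finally, at $i=n$ I obtain a cycle $\sigma^{(n)}$, congruent to $\sigma$ modulo boundaries, with $\|\sigma^{(n)}\| \subseteq \cS_n$. Thus $\sigma^{(n)}$ is a cycle of the subcomplex attached to $\cS_n$ appearing in the definition of a stopping criterion; because $\cS_n$ is a stopping criterion for $D$, that subcomplex is exact at its middle term, so $\sigma^{(n)}$ lies in the image of its $d_2$ and hence is a boundary of the full complex. Therefore $\sigma = \sigma^{(n)} + (\sigma - \sigma^{(n)})$ is a sum of two boundaries, hence a boundary. Since $\sigma$ was an arbitrary cycle, $\ker d_1 = {\rm im}\, d_2$, giving $b_{1,D}(\Cox(X)) = 0$ as required.
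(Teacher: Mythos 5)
Your proposal is correct and follows essentially the same argument as the paper: the paper's proof also peels off $E_1,\dots,E_n$ in order, using each capture move $(\cS_{i},E_{i})$ to modify the cycle by a boundary so that its support lands in the nested set $\cS_i$, and then invokes the stopping criterion at $\cS_n$. The only cosmetic difference is that the paper phrases the induction via the index $j(\sigma)=\max\{i : \|\sigma\|\subset\cS_i\}$ rather than an explicit sequence $\sigma^{(0)},\dots,\sigma^{(n)}$.
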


\begin{proof}  
Let $\cM := ( E_1, \dots, E_n )$ be such that  $D$ is capturable by $\cM$. Let $\cS_0=\cG$ and, 
for $i \in \{1 , \ldots , n \}$ define $\cS_i := \cG \setminus \{ E_1 , E_2 , \ldots , E_i \}$.  
We want to show that every cycle $\sigma$ is a boundary.  Let 
$j (\sigma ) := \max \{ i \in \{ 0 , \ldots , n \} \,|\, || \sigma || \subset \cS_i \}$.  
By definition if $|| \sigma || \subset \cS _n$, or equivalently if $j (\sigma ) = n$, then 
$\sigma $ is a boundary.

Suppose that $j = j (\sigma ) < n$; since $( \cS_{j+1} , E_{j+1} )$ is a capture move for $D$, we may 
modify the support of $\sigma$ by a boundary (if necessary) to ensure that 
$||\sigma || \subset \cS_{j+1} = \cS_j \setminus \{ E_{j+1} \}$.  Repeating this argument, 
we deduce that, modulo boundaries, we may assume that the support of $\sigma $ is 
contained in $\cS_n$ and thus that $\sigma $ is itself a boundary.
\end{proof}

\section{Ample divisors of anticanonical degree at least four}
\label{S:ample four}

Let $X$ be a del Pezzo surface of degree one.  Assume throughout this section that if $X$ is defined over a field of characteristic two, then $X$ is {\free}.  In this section we prove that there are no minimal generators of the ideal $I_X$ in all sufficiently large ample degrees. More precisely, if $D$ is an ample divisor such that $-K_X\cdot D \geq 4$ then we show that $D$ is capturable. 

The general strategy is the following. First we capture $K_1$ and $K_2$ using $\cC$ via Lemma~\ref{captureK}. Next, assume $C$ is an exceptional curve and we want to capture it using $\cS \subset \cC$. We prove that there exist disjoint exceptional curves $S$ and $T$ in $\cS$ such that $\HX{1}{K_X + L_{ST}} = 0$, where $L_{ST} := -K_X + D - C - S - T$. Then $(\{S,T\},C)$ is a capture move for $D$ by Lemma~\ref{captureC}. Often we show that $L_{ST}$ is nef and either big or has anticanonical degree two, omitting any reference to Lemma~\ref{captureC}.  In all cases we capture enough curves to conclude that $D$ is a capturable divisor, using Lemma~\ref{lem:easy} as stopping criterion.

\begin{lemma} 
\label{ma7}
Let $X$ be a del Pezzo surface of degree one and let $b \colon X \to Y$ be a birational 
morphism.  Denote by $K_Y$ the pull-back to $X$ of the canonical divisor on $Y$ and 
let $N$ be a nef divisor on $X$.  Assume that $\deg (Y) \geq 3$; then the divisor 
$D = -K_X-K_Y+N$ is capturable.
\end{lemma}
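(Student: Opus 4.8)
The plan is to exhibit a sequence $\cM$ of elements of $\cG$ witnessing that $D$ is capturable, i.e.\ verifying the two conditions in the definition of capturability. First I would capture the two anticanonical generators. Since $\deg(Y)\geq 3$, the birational morphism $b$ contracts at least two curves, so the pullback $-K_Y$ differs from $-K_X$ by a nonzero effective class and hence $D=-K_X-K_Y+N \neq -2K_X$; Lemma~\ref{captureK} then shows that $(\cC,K_1)$ and $(\cC,K_2)$ are capture moves for $D$. It remains to capture all but two of the exceptional curves in $\cC$, after which Lemma~\ref{lem:easy} furnishes the stopping criterion.

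The key observation is a cancellation that makes the form of $D$ do all the work. To capture an exceptional curve $C$, I would use the pair $(\{C',T\},C)$, where $C'$ is the complementary curve with $C+C'=-2K_X$ and $T$ is a third exceptional curve. Because $C+C'=-2K_X$ and $D=-K_X-K_Y+N$, the divisor appearing in Lemma~\ref{captureC} simplifies dramatically:
\[
L_{C'T} := -K_X+D-C-C'-T = K_X+D-T = -K_Y+N-T.
\]
Thus capturing $C$ reduces to producing a curve $T$, disjoint from $C'$ and distinct from $C$, for which $-K_Y+N-T$ is nef and either big or of anticanonical degree two; Lemma~\ref{captureC} then makes $(\{C',T\},C)$ a capture move. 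Writing $M:=-K_Y+N$, this class is nef, being a sum of the pullback of an ample class and the nef class $N$, and big, since $M^2 \geq (-K_Y)^2 = \deg(Y) \geq 3$.

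The analytic heart of the argument is the existence of a suitable $T$. Here I would use Lemma~\ref{mad} together with Proposition~\ref{clane}: subtracting from the nef and big divisor $M$ a single exceptional curve $T$ of small intersection with $M$ preserves nefness, and the bound $M^2\geq 3$ leaves room for $M-T$ to stay big, the borderline anticanonical degree two case being absorbed by the corresponding clause of Lemma~\ref{captureC}. The main obstacle, however, I expect to be organizational rather than cohomological. I must (i) guarantee that such a $T$ can be chosen \emph{disjoint} from $C'$, which is where Proposition~\ref{clane} must be combined with the intersection combinatorics of Table~\ref{tab:-1curves1}; (ii) order the captures in $\cM$ so that both $C'$ and the chosen $T$ remain available, i.e.\ lie in the relevant $\cS_i$, when $C$ is processed; the involution $C\mapsto C'$ makes this bookkeeping delicate, since using $C'$ to capture $C$ removes $C'$ and forbids the symmetric move on $C'$; and (iii) confirm that after capturing every exceptional curve except a final pair, the two survivors form a stopping criterion, which holds by Lemma~\ref{lem:easy}. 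Balancing the disjointness and availability constraints against the requirement that $M-T$ be nef and big throughout the entire sequence is the step I anticipate will demand the most care.
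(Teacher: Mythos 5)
Your opening moves are sound and agree with the paper: capturing $K_1,K_2$ via Lemma~\ref{captureK} (with the correct check $D\neq -2K_X$), and the cancellation $L_{C'T}=-K_X+D-C-C'-T=-K_Y+N-T$ is exactly the computation the paper uses. But there is a genuine gap in the rest: you propose to capture \emph{every} exceptional curve $C$ by the move $(\{C',T\},C)$, and the ``bookkeeping'' problem you flag in (ii) is not a delicate detail to be balanced but a fatal obstruction to this uniform strategy. Since each such move consumes $C$ while requiring $C'$ to still be available, at most one member of each of the $120$ pairs $\{C,C'\}$ can ever be captured this way; after exhausting these moves you are left with at least $120$ curves in the support, nowhere near a two-element stopping criterion. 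No ordering of the moves fixes this --- what is needed is a second, structurally different capture move for the surviving curves, and your proposal does not supply one. (A secondary, lesser issue: for arbitrary $T$ disjoint from $C'$ the class $-K_Y+N-T$ need not be big nor of anticanonical degree two, e.g.\ $-K_Y-T$ can be a conic $Q$ with $N$ a positive multiple of $Q$, so even the individual moves require more care than ``$M^2\geq 3$ leaves room.'')

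The paper resolves both points by letting $\cS$ be the set of curves contracted by $b$ (at least two pairwise disjoint curves, since $\deg(Y)\geq 3$) and splitting the exceptional curves into two classes. For the few $C$ with $C'\in\cS$ it uses your move $(\{C',T\},C)$ with $T\in\cS\setminus\{C'\}$; here $-K_Y+N-T$ is automatically big and nef because $-K_Y-T$ is the pullback of the anticanonical class of a degree-two del Pezzo surface. For every other $C$ it fixes one disjoint pair $S,T\in\cS$ once and for all and rewrites
\[
L_{ST}=-K_X+D-C-S-T=C'+(-K_Y-S-T)+N,
\]
where $-K_Y-S-T$ is the big and nef pullback of the anticanonical class of a degree-three surface, whose only orthogonal exceptional curves are those in $\cS\setminus\{S,T\}$; since $C'\notin\cS$ this forces $C'\cdot(-K_Y-S-T)>0$ and $L_{ST}$ is big and nef. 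This second move never uses $C'$ as a capturing curve, so the involution causes no conflict, and everything outside $\{S,T\}$ gets captured. You would need to add an argument of this kind to complete your proof.
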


\begin{proof}
Let $\cS$ be the set of exceptional curves contracted by $b$ and note that $\cS$ consists 
of at least two disjoint exceptional curves and that $-K_Y - S - T$ is big and nef for all 
$S,T \in \cS$, $S \neq T$.  Let $C \in \cC \setminus \cS$ be any exceptional curve and let 
$C' := -2K_X-C$.  First we capture the curves $C$ such that $C' \in \cS$ using $\cS$: 
$L_{C'T}:= -K_Y+N-T$ is big and nef for any choice of $T \in \cS \setminus \{C'\}$.  

Second we choose any two distinct $S,T \in \cS$, we let $\cS' := \{S,T\}$ and we capture 
all curves $C \in \cC \setminus \cS'$ such that $C' \notin \cS$: the divisor 
$L_{ST} = C'+(-K_Y-S-T)+N$ is big and nef provided $C' \cdot (-K_Y-S-T) > 0$; since the 
only curves orthogonal to $(-K_Y-S-T)$ are the curves in $\cS \setminus \cS'$, we conclude.
\end{proof}

\begin{lemma} 
\label{ma8}
Let $X$ be a del Pezzo surface of degree one and let $b \colon X \to Y$ be a birational 
morphism.  Denote by $A$ the pull-back to $X$ of the minimal ample divisor on $Y$ and 
let $N$ be a nef divisor on $X$.  Assume that $\deg (Y) \geq 8$; then the divisor 
$D = -K_X+A+N$ is capturable.
\end{lemma}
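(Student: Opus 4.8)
The plan is to follow the general strategy of this section and to mimic the structure of Lemma~\ref{ma7}, taking $\cS$ to be the set of exceptional curves contracted by $b$. Since $X$ has degree one and $\deg(Y)\geq 8$, the morphism $b$ contracts $\deg(Y)-1\geq 7$ pairwise disjoint $(-1)$-curves, so $\cS$ consists of at least seven disjoint exceptional curves, and $A\cdot S=0$ for every $S\in\cS$ because $A=b^{*}A_Y$. First I would capture $K_1$ and $K_2$ using $\cC$ via Lemma~\ref{captureK}; this is legitimate because $-K_X\cdot D=1+(-K_X\cdot A)+(-K_X\cdot N)>2$, as $-K_X\cdot A=-K_Y\cdot A_Y\geq 3$, so in particular $D\neq -2K_X$. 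It then remains to capture exceptional curves, and as usual a move $(\{S,T\},C)$ is produced from Lemma~\ref{captureC} by exhibiting disjoint $S,T$ for which $L_{ST}:=-K_X+D-C-S-T$ is nef and either big or of anticanonical degree two. The goal is to capture all exceptional curves but two, using Lemma~\ref{lem:easy} as stopping criterion.

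The key positivity input I would establish first is that $A-S$ is nef for every $S\in\cS$, that is, $A\cdot\Gamma\geq S\cdot\Gamma$ for all exceptional curves $\Gamma$. Conceptually this expresses that the minimal ample divisor of a del Pezzo surface of degree at least eight has Seshadri constant at least one at the relevant point, and this is precisely where the hypothesis $\deg(Y)\geq 8$ enters; concretely it is a short verification for each of $Y\simeq\bP^2$, $Y\simeq\bP^1\times\bP^1$ and $Y\simeq Bl_p(\bP^2)$. With this in hand I would split into the two cases of Lemma~\ref{ma7} according to the partner curve $C'=-2K_X-C$. If $C'\in\cS$, take $S=C'$ and a suitable $T\in\cS\setminus\{C'\}$; since $C+C'=-2K_X$ one finds $L_{C'T}=A+N-T=(A-T)+N$, which is nef, and big or of anticanonical degree two, so Lemma~\ref{captureC} applies. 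If $C'\notin\cS$, take $S,T\in\cS$, so that $L_{ST}=C'+(A-S-T)+N$, chosen with $C'\cdot S$ and $C'\cdot T$ as small as possible.

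The hard part will be this second case. Unlike Lemma~\ref{ma7}, where $-K_Y-S-T$ is automatically big and nef, here $A-S-T$ need not even be nef, so the positivity of $L_{ST}$ must be extracted from the effective summand $C'$ together with the nefness of $A-S$ and $A-T$. Since $N$ is nef it suffices to treat $C'+A-S-T$, whose self-intersection is
\[
(C'+A-S-T)^2=A^2-3+2\,C'\cdot A-2\,C'\cdot S-2\,C'\cdot T;
\]
requiring this to be positive is exactly what forces the choice of $S,T$ minimizing $C'\cdot S+C'\cdot T$, while nefness is checked against each exceptional curve $\Gamma$, the only dangerous contributions $-S\cdot\Gamma-T\cdot\Gamma$ being controlled by $A\cdot\Gamma\geq S\cdot\Gamma$, $A\cdot\Gamma\geq T\cdot\Gamma$ and $C'\cdot\Gamma\geq 0$. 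I expect this verification, carried out by hand using Table~\ref{tab:-1curves1} and split according to the three possibilities for $Y$, to be the main obstacle.

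Finally I would address the bookkeeping imposed by the definition of capturability, namely that the helper curves still be available when each curve is captured. Every move above uses helpers lying in $\cS$ (including $S=C'$ in the first case, which occurs only when $C'\in\cS$), so I would order $\cM$ so as to capture first all exceptional curves not in $\cS$ and afterwards the curves of $\cS$; since $|\cS|\geq 7$, at each step at least two admissible helpers distinct from the curve being captured remain, and one stops with a two-element subset of $\cS$, a stopping criterion by Lemma~\ref{lem:easy}. By Theorem~\ref{thm:games} this yields $b_{1,D}(\Cox(X))=0$, so $D$ is capturable.
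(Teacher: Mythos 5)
Your framework is the right one---helpers drawn from curves contracted to $Y$, moves supplied by Lemma~\ref{captureC}, Lemma~\ref{lem:easy} as stopping criterion---but the proposal stops exactly where the lemma's content lies: you never establish that for each $C$ with $C'\notin\cS$ there exist disjoint $S,T\in\cS$ making $L_{ST}=C'+(A-S-T)+N$ nef and big (or of anticanonical degree two), and you say yourself that you expect this verification to be ``the main obstacle.'' Declaring that one should ``choose $S,T$ with $C'\cdot S$ and $C'\cdot T$ as small as possible'' is not yet an argument that such a choice works. The paper closes this gap with two ideas absent from your plan. First, when $Y\not\simeq\bP^1\times\bP^1$ it writes $A=L+Q$ with $L$ a twisted cubic and $Q$ zero or a conic and absorbs $Q$ into $N$, so only $A=L$ and the $\bP^1\times\bP^1$ case remain; in particular $\cS$ is taken to be the \emph{eight} curves contracted by $L$, not the seven contracted by $b$ when $\deg(Y)=8$. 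Second, the existence of a good pair is proved by averaging: $\sum_{S\neq T\in\cS}(L-S-T)=7(-K_X+L)$ meets every exceptional curve in at least $7$, while each summand meets it in at most $3$, so every $C'$ meets at least two of the divisors $L-S-T$ positively; the single condition $C'\cdot(L-S-T)>0$ then forces $C'+(L-S-T)$ to be nef of non-negative square, since both summands are exceptional curves. An analogous computation with the conics $A-S-T$ handles $\bP^1\times\bP^1$.

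There are also two concrete errors. In your first case ($C'\in\cS$) you assert that $L_{C'T}=(A-T)+N$ is ``nef, and big or of anticanonical degree two''; for $Y\simeq\bP^2$ the divisor $A-T$ is a conic, and if $N$ is a positive multiple of that same conic then $L_{C'T}$ is a multiple $\geq 2$ of a conic, which is neither big nor of anticanonical degree two, and indeed $\HX{1}{K_X+2(A-T)}\neq 0$, so the move fails for that $T$ (it is repairable, since at most one $T$ is bad, but you must say so). The same degeneration threatens your second case whenever $C'+(A-S-T)$ has square zero, which is why the paper additionally demands that no rational multiple of $N$ equal $C'+(L-S-T)$ and notes that at most one pair $(S,T)$ is thereby excluded. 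Separately, your proposed nefness mechanism does not work as stated: $A\cdot\Gamma\geq S\cdot\Gamma$ and $A\cdot\Gamma\geq T\cdot\Gamma$ only yield $2A\cdot\Gamma\geq (S+T)\cdot\Gamma$, so they do not control $(A-S-T)\cdot\Gamma$, and for the critical test curves $\Gamma=C'$ and $\Gamma=L-S-T$ the term $C'\cdot\Gamma$ equals $-1$ in the first instance rather than helping; both tests are passed only via the positivity $C'\cdot(L-S-T)>0$ that the averaging argument guarantees.
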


\begin{proof}
Note that $Y$ is isomorphic to $\bP^2$, $Bl_p(\bP^2)$ or $\bP^1 \times \bP^1$.

{\bf Case 1: $Y \not \simeq \bP^1 \times \bP^1$.}

\noindent
In this case $A=L+Q$, where $L$ is a twisted cubic and $Q$ is either zero or a conic.  
Thus it suffices to treat the case $D = -K_X+L+N$.

Let $\cS$ be the set of curves contracted by $L$; thus $\cS$ consists of eight 
disjoint exceptional curves.  Note that $\sum _{S \neq T \in \cS} L-S-T = 7(-K_X+L)$
has intersection number at least seven with every exceptional curve on $X$.  Since 
the intersection number of two exceptional curves on $X$ is at most three, it follows 
that every exceptional curve on $X$ intersects positively at least two of the curves 
$\{L-S-T \,|\, S \neq T \in \cS\}$.  For any $C \in \cC \setminus \cS$ let $S,T \in \cS$ be 
such that $(-2K_X-C) \cdot (L-S-T) > 0$ and such that no (rational) multiple of $N$ 
equals $(-2K_X-C) + (L-S-T)$.  Note that the second condition ensures that $N$ does not contract both $-2K_X-C$ and $(L-S-T)$ when $-2K_X-C+(L-S-T)$ is a conic. 
We have that $L_{ST}:= -2K_X+L-C-S-T+N = (-2K_X-C)+(L-S-T)+N$ is nef and either it has 
anticanonical degree two or it is also big.  Thus $(\cS,C)$ is a capture move for all 
$C \in \cC \setminus \cS$.  
Let $S,T \in \cS$ be distinct elements and let $\cS' := \{S,T\}$.  
For any $C \in \cS \setminus \cS'$ the divisor $L_{ST}:= (-2K_X-C)+(L-S-T)+N$ is big and 
nef since $(-2K_X-C) \cdot (L-S-T) = 2$.

{\bf Case 2: $Y \simeq \bP^1 \times \bP^1$.}

\noindent
Let $\cS$ be the set of curves contracted by $A$; thus $\cS$ consists of seven disjoint 
exceptional curves.  Note that $\sum _{S \neq T \in \cS} A-S-T = -6K_X+9A$ has 
intersection number at least six with every exceptional curve on $X$ and that the 
summands are conics.  Since the intersection number of an exceptional curve and a 
conic on $X$ is at most four, it follows that every exceptional curve on $X$ intersects 
positively at least two of the conics $\{A-S-T \,|\, S \neq T \in \cS\}$.  For any 
$C \in \cC \setminus \cS$ let $S,T \in \cS$ be such that $(-2K_X-C) \cdot (A-S-T) > 0$. 
We have that $L_{ST}:= -2K_X+A-C-S-T+N = (-2K_X-C)+(A-S-T)+N$ is big and nef.  Thus $(\cS,C)$ is 
a capture move for all $C \in \cC \setminus \cS$.  Let $S,T \in \cS$ be distinct elements 
and let $\cS' := \{S,T\}$.  For any $C \in \cS \setminus \cS'$ the divisor 
$L_{ST}:= (-2K_X-C)+(A-S-T)+N$ is big and nef since $(-2K_X-C) \cdot (A-S-T) = 4$.
\end{proof}

\begin{lemma} \label{ke}
Let $X$ be a del Pezzo surface of degree one, $E$ an exceptional curve on $X$ and 
$N \neq 0$ a nef divisor on $X$ such that $N \cdot E = 0$.  The divisor $-(n+2)K_X+E+N$ is 
capturable for $n \geq 0$.
\end{lemma}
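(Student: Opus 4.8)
The plan is to exhibit $D := -(n+2)K_X+E+N$ as an ample divisor of anticanonical degree at least four and then run the capture machinery of this section. First I would record that $D$ is ample: for every exceptional curve $F$ we have $D\cdot F=(n+2)+E\cdot F+N\cdot F$, which equals $n+1$ when $F=E$ (using $N\cdot E=0$ and $E^2=-1$) and is at least $n+2$ otherwise, so $D\cdot F>0$ and Proposition~\ref{clane} applies. Likewise $-K_X\cdot D=(n+2)+1+(-K_X\cdot N)\geq n+4\geq 4$, so in particular $D\neq -2K_X$ and Lemma~\ref{captureK} captures $K_1$ and $K_2$ using $\cC$ (recall that $X$ is assumed {\free} throughout this section). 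It then remains to capture exceptional curves.

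As in Lemmas~\ref{ma7} and~\ref{ma8}, I would capture a curve $C$ by producing disjoint exceptional curves $S,T$, both disjoint from $E$, for which $L_{ST}:=-K_X+D-C-S-T$ is nef and either big or of anticanonical degree two; Lemma~\ref{captureC} then yields the capture move $(\{S,T\},C)$. Writing $M:=-K_X+D=-(n+3)K_X+E+N$, nefness of $M-C-S-T$ is the inequality $(n+3)+E\cdot F+N\cdot F\geq C\cdot F+S\cdot F+T\cdot F$ for every exceptional curve $F$, and bigness is $(M-C-S-T)^2>0$. The margin $n+3\geq 3$, combined with a choice of $S,T$ as in Lemma~\ref{ma8} (so that the few curves meeting $C$, $S$ and $T$ simultaneously are controlled), handles every $F\neq E$; the genuinely delicate constraint is at $F=E$, where the hypothesis $N\cdot E=0$ removes all slack and the inequality becomes $n+2\geq C\cdot E+S\cdot E+T\cdot E=C\cdot E$. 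Since $C\cdot E\leq 2$ for every exceptional curve $C$ other than $E$ and its partner $E':=-2K_X-E$ (for which $E\cdot E'=3$), this holds for all such $C$, and a self-intersection computation—or, in the boundary case $L_{ST}\cdot E=0$, the anticanonical-degree-two clause of Lemma~\ref{captureC}—secures the capture.

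The key structural observation, and the main obstacle, is the curve $E'$: evaluating at $F=E$ for $C=E'$ gives $L_{ST}\cdot E=(n-1)-S\cdot E-T\cdot E$, which shows that when $n=0$ no choice of $S,T$ can make $L_{ST}$ nef, so $E'$ cannot be captured by a nef-and-big pair. I would therefore never capture $E$ or $E'$, instead reserving the two-element set $\{E,E'\}$ as the stopping criterion, which is valid by Lemma~\ref{lem:easy}; this choice is forced when $n=0$ and is a convenient uniform choice for all $n\geq 0$. The remaining work—where I expect the real effort to lie—is combinatorial: ordering the captures of the $238$ exceptional curves distinct from $E,E'$ so that, at each stage, a suitable disjoint pair $S,T$ avoiding $E$ is still available in $\cS_i$, and verifying bigness (or the conic alternative) in the equality cases $C\cdot E=2$. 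Once every curve outside $\{E,E'\}$ together with $K_1,K_2$ has been captured, the support of any cycle lies in $\{E,E'\}$, and $D$ is capturable.
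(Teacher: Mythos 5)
Your proposal has a genuine gap, and it is located precisely at the point you identify as the ``key structural observation.'' You are right that for $C=E'=-2K_X-E$ and a pair $S,T$ both disjoint from $E$ one gets $L_{ST}\cdot E=n-1-S\cdot E-T\cdot E<0$ when $n=0$; but the conclusion you draw --- that $E'$ cannot be captured and must be reserved for the stopping criterion --- is an artifact of your self-imposed restriction that both capturing curves avoid $E$. Nothing forces this: the paper captures every curve $C\in\cC\setminus\{E,F_1,F_2\}$, including $E'$, by the move $(\{E,F\},C)$ with $E$ \emph{itself} as one of the two capturing curves and $F\in\{F_1,F_2\}$ disjoint from $E$ (the $F_i$ are produced by Lemma~\ref{mad}, so that $N-F_i$ is nef or nef plus an exceptional curve). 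Then $L_{EF}=-K_X+(-2K_X-C)+(N-F)-nK_X$, and the term $-E$ in the subtraction contributes $-E\cdot E=+1$ at the test curve $E$, so that for $C=E'$ one finds $L_{EF}\cdot E=n\geq 0$ and $L_{EF}=-K_X+E+(N-F)-nK_X$ is big and nef for all $n\geq 0$. Your obstruction disappears once $E$ is allowed into the capturing pair.

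The second, independent, problem is that your fallback --- taking $\{E,E'\}$ as the stopping criterion --- cannot be reached with the tools of this paper. To arrive at $\cS_n=\{E,E'\}$ you must perform a final capture move of the form $(\{E,E'\},C)$, and since $E\cdot E'=3$ the curves are not disjoint, so Lemma~\ref{captureC} does not apply and no other mechanism for producing capture moves from a non-disjoint pair is available. (Lemma~\ref{lem:easy} certifies that \emph{any} two-element set is a stopping criterion, but it does not help you capture down to that set.) The paper instead ends at the disjoint pair $\{E,F_1\}$, capturing $F_2$ last by $(\{E,F_1\},F_2)$. Finally, the part of your argument you defer as ``combinatorial'' --- choosing, at every stage, a disjoint pair avoiding $E$ with the required positivity --- is where the actual content of the lemma lies, and the appeal to ``a choice of $S,T$ as in Lemma~\ref{ma8}'' does not transfer, since that lemma concerns divisors of the shape $-K_X+A+N$ with $A$ pulled back from a surface of degree at least eight, not $-(n+2)K_X+E+N$ with $N\cdot E=0$. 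The fix is to abandon the constraint that the capturing pair avoid $E$ and to follow the three-curve scheme $\cS=\{E,F_1,F_2\}$.
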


\begin{proof} By Lemma~\ref{mad}, for $i \in \{1,2\}$, there is an exceptional curve $F_i$ on 
$X$ such that $F_i \cdot E = 0$ and $N-F_i$ is either nef or the sum of a nef divisor and 
an exceptional curve, with $F_1 \neq F_2$, .  
Let $\cS := \{E,F_1,F_2\}$.  For any $C \in \cC \setminus \cS$, let $F \in \{F_1,F_2\}$ be 
such that $(-2K_X-C)$ is not a fixed component of $(N-F)$; we have that 
$L_{EF} = -(n+3)K_X + E - C - E - F + N = -K_X+(-2K_X-C) + (N-F) -n K_X$ is big and 
nef and we conclude that we can capture all curves in $\cC \setminus \cS$, using only 
the three curves in $\cS$.  Finally, by the same argument, we find $(\{E,F_1\} , F_2)$ 
is a capture move and we are done.
\end{proof}

\begin{lemma} \label{kke}
Let $X$ be a del Pezzo surface of degree one, $E$ an exceptional curve on $X$ and 
$N$ a nef divisor on $X$.  The divisor $D:=-(n+3)K_X+E+N$ is capturable if $n \geq 0$.
\end{lemma}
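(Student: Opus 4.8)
The plan is to exhibit a sequence $\cM$ witnessing that $D$ is capturable, in the sense required by Theorem~\ref{thm:games}. As in the other lemmas of this section, I would place $K_1$ and $K_2$ at the head of $\cM$ and capture them using Lemma~\ref{captureK} (this is legitimate because $-K_X\cdot D\geq 4$, so $D\neq -2K_X$), and I would let the last two exceptional curves left uncaptured form a two-element stopping set, which is a stopping criterion by Lemma~\ref{lem:easy}. The whole problem thus reduces to capturing every exceptional curve except two, and throughout I will use the enlargement remark: once $(\{E,F\},C)$ is shown to be a capture move, so is $(\cS_i,C)$ for any $\cS_i\supseteq\{E,F\}$ with $C\notin\cS_i$.

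The key observation is that capturing an exceptional curve $C$ by a pair of the form $(E,F)$, with $F$ disjoint from $E$, causes the summand $E$ of $D$ to cancel. Writing $C'=-2K_X-C$ and $F'=-2K_X-F$, the divisor governing Lemma~\ref{captureC} is
\[
L_{EF} = -K_X + D - C - E - F = -nK_X + C' + F' + N .
\]
For $n\geq 1$ this is clean and uniform. Fix any exceptional curve $F_0$ disjoint from $E$. For every $C\notin\{E,F_0\}$ we have $C'\neq F_0'$, so $C'\cdot G + F_0'\cdot G\geq -1$ for every exceptional curve $G$, and since $N$ is nef,
\[
L_{EF_0}\cdot G = n + C'\cdot G + F_0'\cdot G + N\cdot G \geq n-1 \geq 0 ,
\]
so $L_{EF_0}$ is nef; it is big because $L_{EF_0} = -K_X + \bigl(-(n-1)K_X + C' + F_0' + N\bigr)$ displays it as $-K_X$ plus an effective class. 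By Lemma~\ref{captureC} the single pair $(E,F_0)$ captures every exceptional curve other than $E$ and $F_0$, and $\{E,F_0\}$ is the stopping set.

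The delicate case is $n=0$, where $L_{EF}=C'+F'+N$ sits near the borderline. Here I would split according to $N$. If $N=-K_X$ then $D=-4K_X+E$ is the previous case with $n$ replaced by $1$; if $N\neq 0$ and $N\cdot E=0$ then $D$ already matches Lemma~\ref{ke} with parameter $1$. For the remaining $N\neq 0$ I would rewrite $L_{EF}=(-K_X+C')+(N-F)+(-K_X)$ and choose, via Lemma~\ref{mad}, a curve $F$ with $N-F$ nef or the sum of a nef divisor and an exceptional curve; a short intersection check (examining the fixed component of $N-F$) then shows $L_{EF}$ is nef and big for every $C$. Finally, when $N=0$ one has $L_{EF}=C'+F'$, which is nef precisely when $C'\cdot F'\geq 1$, in which case it is a conic and the move is valid by the conic case of Lemma~\ref{captureC}.

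The hard part is exactly this $n=0$ bookkeeping. I must guarantee, for each curve $C$ to be captured, a partner $F$ that is simultaneously disjoint from $E$ and sufficiently positive — either with $N-F$ nef or nef-plus-exceptional when $N\neq 0$, or with $C'\cdot F'\geq 1$ when $N=0$ — and I must order the captures so that such an $F$ is still present at the needed moment and so that exactly two curves survive to form the stopping set. Since no single fixed $F$ can satisfy $C'\cdot F'\geq 1$ for all $C$, this is genuinely a combinatorial statement about the configuration of exceptional curves disjoint from $E$ (equivalently, about exceptional curves on the degree-two del Pezzo surface obtained by contracting $E$), and establishing it — rather than the cohomological vanishing — is where I expect the real difficulty to lie.
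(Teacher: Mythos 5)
Your reduction to capturing every exceptional curve by a disjoint pair containing $E$, the computation $L_{EF}=-nK_X+C'+F'+N$, and the bookkeeping with $K_1,K_2$ and the two-element stopping set all match the paper's framework, and your treatment of $n\geq 1$ is correct and in fact cleaner than the paper's: a single partner $F_0$ disjoint from $E$ captures every other curve, since $C'\neq F_0'$ forces $L_{EF_0}\cdot G\geq n-1\geq 0$ for every $(-1)$-curve $G$, and bigness follows from exhibiting $L_{EF_0}$ as $-K_X$ plus an effective class. (The paper instead runs a single uniform argument for all $n\geq 0$.)

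For $n=0$, however — which is the essential content of the lemma — your proposal stops exactly where the work begins, and this is a genuine gap. Two things are missing. First, in the subcase $N\neq 0$, $N\cdot E>0$, you invoke Lemma~\ref{mad} to produce $F$ with $N-F$ nef or nef-plus-exceptional, but Lemma~\ref{mad} does not guarantee that such an $F$ can be taken disjoint from $E$, and disjointness is required by Lemma~\ref{captureC}; this needs a separate argument. Second, and more seriously, when $N=0$ (so $D=-3K_X+E$) you concede that no single partner works and that a combinatorial choice and ordering of partners is required, but you do not supply it. The paper's proof consists precisely of this combinatorics: it fixes eight disjoint exceptional curves $E,E_2,\dots,E_8$, sets $\tilde E_i:=-K_X+E-E_i$ (so that all fourteen curves $E_i,\tilde E_i$ are disjoint from $E$), takes $\cS$ to be this fifteen-element set, verifies from Table~\ref{tab:-1curves1} that every $C\notin\cS$ meets at least two members of $\cS$ positively — the second choice being needed to avoid the degenerate situation in which $N$ is a positive multiple of the conic $-4K_X-S-C$, where $L_{ES}$ is neither big nor of anticanonical degree two — and then captures $E_3,\dots,E_8$ with $\{E,\tilde E_2\}$ and $\tilde E_2,\dots,\tilde E_8$ with $\{E,E_2\}$. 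Without this (or an equivalent) explicit scheme, the borderline degrees, which are the ones the paper actually needs, are not handled.
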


\begin{proof}
Let $E , E_2 , \ldots , E_8$ be eight disjoint exceptional curves on $X$ and let $\tilde E_i:= -K_X+E-E_i$ for $2\leq i\leq 8$. Let $\cS := \{E , E_2, \ldots , E_8 , \tilde E_2, \ldots , 
\tilde E_8\}$.  It is clear from Table~\ref{tab:-1curves1} that every exceptional curve $C \in \cC \setminus \cS$ intersects positively at least two of the curves 
in $\cS$; in particular for every $C \in \cC \setminus \cS$ there is $S \in \cS$ such that 
$S \cdot E = 0$, $S \cdot C > 0$ and $N$ is not a (rational) multiple of $-4K_X-(S+C)$.  
Thus $L_{ES} = -K_X+D-E-S-C=(-4K_X-C-S)+N-nK_X$ is nef and it either 
has anticanonical degree two or it is big.  In either case $\HX{1}{K_X+L_{ES}}=0$ and 
we may capture all the curves in $\cC \setminus \cS$ using the curves in $\cS$.  

Note that for all integers $i,j,k,l \in \{ 2 , 3 , \ldots , 8 \}$, $i \neq j$, $k \neq l$, we have 
$(-4K_X-E_i-\tilde E_j) \cdot (-4K_X-E_k-\tilde E_l) = 0$ if and only if $i=k$ and $j=l$.  
In particular, if $N$ is a multiple of a conic, it is proportional to at most one of the divisors 
$(-4K_X-E_i-\tilde E_j)$; relabeling the indices if necessary, we may assume that $N$ is 
not proportional to $-4K_X-(E_i+\tilde E_j)$ for all $i,j \in \{2 , 3 , \ldots , 8 \}$ and 
$i \neq j$, $(i,j) \neq (7,8)$.  Thus for all $i \in \{3 , 4 , \ldots , 8 \}$ the divisor 
$L_{E \tilde E_2} = (-4K_X-E_i-\tilde E_2)+N-nK_X$ is either a conic or big and nef and 
we may capture $E_3 , E_4 , \ldots , E_8$ using $E,\tilde E_2$.  Similarly we can capture 
$\tilde E_2 , \tilde E_3 , \ldots , \tilde E_8$ using $E,E_2$ and we are done.
\end{proof}

\begin{lemma} 
\label{lem:capture} 
Let $X$ be a del Pezzo surface of degree one. Let $D=-(n+4)K_X+N$ where $n \geq 0$ and $N$ is a nef divisor. Let $C,S$ and $T$ be exceptional curves such that $S\cdot T=0$ and that $C\cdot S$, $C\cdot T$ are at least $2$. Then $(\{S,T\},C)$ is a capture move for $D$ if either
\begin{enumerate}
\item{$C\cdot (S+T)=5$, or }
\item{$C\cdot(S+T)=4$ and the divisor $N$ is either $0$ or not a multiple of the conic $T'+S'+C'+K_X$.}
\end{enumerate}
\end{lemma}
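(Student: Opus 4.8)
The plan is to reduce the claim to the criterion already proved in Lemma~\ref{captureC}, namely that $(\{S,T\},C)$ is a capture move for $D$ whenever $-K_X+D-S-T-C$ is nef and either big or of anticanonical degree two. Writing $L_{ST}:=-K_X+D-C-S-T$ as in the discussion opening the section, I would substitute $D=-(n+4)K_X+N$ to get
\[
L_{ST}=-(n+5)K_X+N-C-S-T=(-(n+1)K_X)+(-4K_X-C-S-T)+N .
\]
The first summand $-(n+1)K_X$ is nef (a non-negative multiple of the ample $-K_X$), and $N$ is nef by hypothesis, so the whole question reduces to understanding the middle divisor $M:=-4K_X-C-S-T$. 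Thus the first step is to show that $M$ is nef, and the second is to decide when $L_{ST}$ is big (the non-big case being handled by the anticanonical-degree-two escape hatch).

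For nefness of $M$ I would use Proposition~\ref{clane}: it suffices to check $M\cdot E\geq 0$ for every exceptional curve $E$. Here I expect to use that $-K_X\cdot E=1$ for every exceptional curve on a degree-one del Pezzo surface, that intersection numbers of two exceptional curves lie in $\{-1,0,1,2,3\}$, and the hypotheses $S\cdot T=0$, $C\cdot S\geq 2$, $C\cdot T\geq 2$. The delicate instances are $E\in\{C,S,T\}$ and the curve $C'=-2K_X-C$: for example $M\cdot C=4-C^2-C\cdot S-C\cdot T=5-(C\cdot S+C\cdot T)$, which is non-negative precisely because $C\cdot(S+T)\leq 5$ (intersection with two disjoint exceptional curves cannot exceed $5$, as $C\cdot(-2K_X)=2$ forces each of $C\cdot S,C\cdot T\leq 3$ and their sum is bounded). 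Checking $M\cdot S$, $M\cdot T$, $M\cdot C'$, and a generic $E$ should all reduce to these same numerical facts, so nefness of $M$ (hence of $L_{ST}$) holds in both cases (1) and (2).

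It remains to separate the big and non-big cases, and this is where the two hypotheses diverge. I would compute the self-intersection $L_{ST}^2$, or more conveniently the anticanonical degree $-K_X\cdot L_{ST}=(n+5)-1-1-1=n+2$ together with $L_{ST}^2$, and argue that a nef divisor on a del Pezzo surface is big unless it is a multiple of a conic (as recorded in the discussion after Corollary~\ref{maquale}). In case (1), $C\cdot(S+T)=5$ makes $M$ have strictly positive self-intersection, forcing $L_{ST}$ big, so Lemma~\ref{captureC} applies directly. In case (2), $C\cdot(S+T)=4$ gives $M^2=0$, and a short check identifies $M=-4K_X-C-S-T$ with the conic $C'+S'+T'+K_X$ (using $C'=-2K_X-C$ etc., so $C'+S'+T'+K_X=-6K_X-C-S-T+K_X=-5K_X-C-S-T$—I would verify the exact multiple and conic class here); thus $L_{ST}$ is a multiple of a conic exactly when $N$ is proportional to this conic, and the hypothesis that $N$ is either $0$ or \emph{not} such a multiple guarantees that $L_{ST}$ is either big or has anticanonical degree two, again invoking Lemma~\ref{captureC}.

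The main obstacle I anticipate is the bookkeeping in case (2): correctly identifying the conic class $T'+S'+C'+K_X$ with the nef non-big part of $L_{ST}$, and verifying that when $N$ is a nonzero multiple of a \emph{different} conic the sum $L_{ST}$ is genuinely big rather than accidentally remaining a multiple of one conic. This hinges on the fact that two distinct conic classes have positive intersection, so their positive combination has positive self-intersection; the hypothesis on $N$ is exactly what rules out the degenerate coincidence. The nefness verification, while routine, also requires care at the finitely many special curves $C,S,T,C'$, so I would organize it as a short case analysis on $E\cdot S$, $E\cdot T$, $E\cdot C$ rather than attempting a single uniform bound.
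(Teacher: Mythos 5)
Your overall plan --- reducing to Lemma~\ref{captureC} by showing that $L_{ST}=-K_X+D-C-S-T$ is nef and either big or of anticanonical degree two --- is exactly the paper's strategy, but the pivot of your argument is false: the divisor $M:=-4K_X-C-S-T$ is not nef, and its self-intersection is not what you claim. One computes
$M^2=16K_X^2+8K_X\cdot(C+S+T)+(C+S+T)^2=16-24-3+2\,C\cdot(S+T)=2\,C\cdot(S+T)-11$,
which equals $-1$ in case (1) and $-3$ in case (2); a nef divisor has non-negative square, so $M$ is nef in neither case, and the claims ``$M^2>0$ in case (1)'' and ``$M^2=0$ in case (2)'' are both wrong. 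Concretely, in case (1) one of $C\cdot S$, $C\cdot T$ equals $3$, say $C\cdot S=3$, so $S=C'$ and $M=-4K_X-C-C'-T=-2K_X-T=T'$ is itself an exceptional curve with $M\cdot T'=-1$; in case (2) one finds $M\cdot S'=C\cdot S-3=-1$ and likewise for $T'$. Your case analysis omits exactly the curves $S'$ and $T'$. Relatedly, $M$ is not the conic $Q:=S'+T'+C'+K_X$: since $S'+T'+C'=-6K_X-C-S-T$, one has $Q=-5K_X-C-S-T=M-K_X$, which is precisely the discrepancy you flagged but did not resolve.

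The lemma can be rescued along your lines, but nefness must be proved for $L_{ST}$ itself, letting the ample summand $-(n+1)K_X$ absorb the negativity of $M$; this is what the paper does. It writes $L_{ST}=S'+T'+C'-(n-1)K_X+N$ and checks that $(S'+T'+C')\cdot V\geq 1$ for every exceptional curve $V$: for $V\in\{S',T',C'\}$ this follows from $C'\cdot S'=C\cdot S\geq2$ and $C'\cdot T'=C\cdot T\geq2$, and for all other $V$ from the fact that $V$ cannot be disjoint from all of $S',T',C'$ (contracting such a $V$ would make them pullbacks from a degree-two del Pezzo surface, where $C'\cdot S'=C'\cdot T'=2$ forces $S'+C'=T'+C'=-K_Y$ and hence $S'=T'$). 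Hence $L_{ST}\cdot V\geq 1+(n-1)\geq 0$. Bigness in case (1) then comes from $L_{ST}=-(n+1)K_X+T'+N$ (ample plus effective, after relabeling so that $S=C'$), and in case (2) from $L_{ST}=Q-nK_X+N$, which is big unless $n=0$ and $N$ is a non-negative multiple of $Q$; the hypothesis on $N$ leaves only $L_{ST}=Q$, of anticanonical degree two, so Lemma~\ref{captureC} still applies. Your closing observation that two non-proportional nef classes with one of square zero must meet positively is correct and is the right reason why $Q+N$ is big for $N$ nonzero and not a multiple of $Q$, but as written your proof does not go through.
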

\begin{proof} Consider $L_{ST}:=-K_X+D-S-T-C=S'+T'+C'-(n-1)K_X+N$. Since $C'\cdot S'$ and $C'\cdot T'$ are at least two the curves $S',C'$ and $T'$, taken together, are not all pullbacks of exceptional curves on a del Pezzo surface $Y$ of degree at least two. It follows that every exceptional curve $V\not\in\{S',T',C'\}$ intersects at least one curve in this set and thus $L_{ST}$ is nef. If $C\cdot(S+T)=5$ then either $C' + S' = -2K_X$ or $C' + T' = -2K_X$ and  $L_{ST}$ is big. If $N=0$ then $L_{ST}$ has anticanonical degree $2$ and capturability follows from Lemma~\ref{captureC}.
Finally, assume that $C\cdot S=C\cdot T=2$. In this case $S'+T'+C'+K_X=Q$ is a conic and $L_{ST}=Q-nK_X+N$ is big unless $N=mQ$ for some $m\geq 1$.
\end{proof}

\begin{lemma} 
\label{kk}
Let $X$ be a del Pezzo surface of degree one and let $N$ be a nef divisor on $X$.  
The divisor $-(n+4)K_X+N$ is capturable for $n \geq 0$.
\end{lemma}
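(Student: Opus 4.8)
The plan is to produce, for the divisor $D=-(n+4)K_X+N$, an explicit sequence of capture moves and then invoke Theorem~\ref{thm:games}. Since $-K_X\cdot D = n+4+(-K_X\cdot N)\geq n+4\geq 4>2$, we have $D\neq -2K_X$, so Lemma~\ref{captureK} lets me first capture $K_1$ and $K_2$ using $\cC$ (this is where the sweeping hypothesis of the section enters). After this it suffices to capture all but two of the $240$ exceptional curves in $\cC$, the remaining pair forming a stopping criterion by the remark following Lemma~\ref{lem:easy}.

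The tool for the exceptional curves is Lemma~\ref{lem:capture}, whose hypotheses I read off from the intersection theory of $\cC$. For $C\in\cC$ write $C':=-2K_X-C$; then $C\cdot C'=3$ and, for every other exceptional curve $T$, $C'\cdot T=2-C\cdot T$. In particular the exceptional curves $T$ with $C\cdot T=2$ are precisely those disjoint from $C'$, and for any such $T$ the pair $(\{C',T\},C)$ meets the hypotheses of Lemma~\ref{lem:capture}(1): $C'\cdot T=0$, $C\cdot C'=3$, $C\cdot T=2$, and $C\cdot(C'+T)=5$. Thus $(\{C',T\},C)$ is a capture move \emph{with no condition on $N$}. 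This already makes every exceptional curve capturable in isolation; the content of the lemma is to perform these moves \emph{compatibly}, since in the definition of capturability the helpers used to capture $C=E_i$ must lie in $\cS_i=\cG\setminus\{E_1,\dots,E_i\}$, i.e.\ must not have been captured earlier.

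I would handle the compatibility in two layers. If $N\neq 0$ and $N-E$ is nef for some exceptional curve $E$, there is nothing combinatorial to do: writing $D=-\bigl((n+1)+3\bigr)K_X+E+(N-E)$ exhibits $D$ in the form of Lemma~\ref{kke}, which gives capturability at once. Otherwise I capture the exceptional curves directly by Lemma~\ref{lem:capture}, growing an ``available set'' outward from a fixed two-element stopping set: pairing $\cC$ by the involution $C\mapsto C'$, I capture the first member of each pair by the $N$-free move $(\{C',T\},C)$ with $C\cdot T=2$, using the partner $C'$ while it is still available, and the second member by a type-(2) move using two disjoint curves at intersection two with it. A type-(2) move carries no extra condition when $N=0$ or when $N$ is big (a big divisor is not a multiple of a conic); the only delicate subcase is $N=mQ$ for a conic $Q$, where the move must avoid the single forbidden conic $S'+T'+C'+K_X=Q$. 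This is possible because each $C$ admits $56$ exceptional curves at intersection two, hence many admissible disjoint pairs producing pairwise distinct conics, whereas $N$ is a multiple of at most one conic.

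The step I expect to be the main obstacle is the bookkeeping of availability in this last situation: one must order the (roughly) $240$ capture moves so that at each stage both helpers are still present, while simultaneously dodging the forbidden conic in the type-(2) moves when $N=mQ$. I would reduce this to a finite verification, using Table~\ref{tab:-1curves1} and the transitivity of the action of $W(E_8)$ on $\cC$ to show that the available set can always be enlarged until only two curves remain. The abundance of admissible helpers, namely $56$ curves at intersection two with any given $C$ together with its partner $C'$, is what prevents the process from getting stuck before it reaches a two-element stopping criterion, at which point Theorem~\ref{thm:games} yields $b_{1,D}(\Cox(X))=0$.
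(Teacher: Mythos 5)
Your setup is right: you identify Lemma~\ref{lem:capture} as the engine, your intersection computations are correct (in particular the observation that $(\{C',T\},C)$ with $C\cdot T=2$ is always a type-(1) move, independent of $N$, is valid), and the reduction to Lemma~\ref{kke} when $N-E$ is nef for some exceptional curve $E$ is a legitimate shortcut the paper does not use. But the proof is not complete, and you say so yourself: the entire content of Lemma~\ref{kk} is the \emph{schedule}, i.e.\ an explicit ordering $(E_1,\dots,E_n)$ of the curves to be captured such that each move $(\cS_i,E_i)$ uses only helpers in $\cS_i=\cG\setminus\{E_1,\dots,E_i\}$, hence helpers not yet consumed, and you defer exactly this to ``a finite verification'' that you do not carry out. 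Moreover, the scheme you sketch has a concrete problem in the endgame. Pairing $\cC$ by $C\mapsto C'$ and capturing the first member of each pair via its partner forces you, for the second member, to find two \emph{disjoint} surviving curves each meeting it at least twice; but the process must terminate with only two surviving curves, so the last several moves have almost no helpers to choose from, and it is precisely there that your count (``$56$ curves at intersection two, hence many admissible pairs, at most one forbidden conic'') collapses: with two helpers left there is exactly one candidate pair and no room to dodge the case $N=mQ$. An appeal to $W(E_8)$-transitivity does not produce the ordering for you, and note also that your paired scheme would capture curves such as $E_1$ and $E_2$ that must survive to serve as the stopping criterion.

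For comparison, the paper resolves the bookkeeping by organizing $\cC$ along a twisted cubic $L$ contracting $E_1,\dots,E_8$: it first captures the curves with $L\cdot C=3$, choosing between two explicitly constructed candidate pairs so as to avoid the single forbidden conic of Lemma~\ref{lem:capture}(2); then the strata $L\cdot C\in\{4,5\}$ using helpers among $\{E_1,\dots,E_8\}$ and $L\cdot C\in\{1,2\}$ using helpers among $\{E_1',\dots,E_8'\}$; then $E_3,\dots,E_8$ using $\{E_1',E_2'\}$; and finally $E_1',\dots,E_8'$ using the stopping pair $\{E_1,E_2\}$. At every stage the helpers demonstrably lie in a stratum not yet touched, which is exactly the compatibility your argument leaves open. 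Your Lemma~\ref{kke} reduction is sound where it applies, but it does not cover $N=0$ or $N$ a multiple of a conic, which are precisely the cases where the forbidden-conic condition in Lemma~\ref{lem:capture}(2) bites, so the hard case remains unproved.
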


\begin{proof}
Let $L$ be a twisted cubic and let $E_1,\dots, E_8$ be exceptional curves on $X$ contracted by $L$. First we capture all curves $C\in\cC$ such that $L\cdot C=3$. From Table~\ref{tab:-1curves1} it is clear that $=K_X+E_i-E_j$ for some $i\neq j$. Let $S=E_j$, $T_1=2L-E_i-E_{i_1}-\dots E_{i_4}$ where $\{i,j\}\cap \{i_1,\dots, i_4\}=\emptyset$ and $T_2=2L-E_i-E_{j_1}-\dots E_{j_4}$ where $\{i,j\}\cap \{j_1,\dots, j_4\}=\emptyset$ and furthermore $\{i_1,\dots, i_4\}\neq\{j_1,\dots, j_4\}$.
Note that $L\cdot S$, $L\cdot T_i\neq 3$, $C\cdot S=C\cdot T_i=2$ and $S\cdot T_i=0$ for $i=1,2$. Let $T$ be an exceptional curve in $\{T_1,T_2\}$ such that $N\neq m(T'+S'+C'+K_X)$. By Lemma~\ref{lem:capture}(ii), $(\{S,T\},C)$ is a capture move.   

Next, we capture curves $C$ with $L\cdot C\in\{4,5\}$. It is clear from Table~\ref{tab:-1curves1} that for any such curve there exists a pair of distinct exceptional curves $S,T\in \{E_1,\dots, E_8\}$ such that $(\{S,T\},C)$ is a capture move via Lemma~\ref{lem:capture}(ii). Similarly we capture curves $C$ with $L\cdot C\in\{1,2\}$ using some pair $S,T\in \{E_1',\dots, E_8'\}$. Next we capture $E_3 , \ldots , E_8$ using $E_1'$ and $E_2'$ via Lemma~\ref{lem:capture}(i) and finally, we capture $E_1' , \ldots , E_8'$ using $E_1$ and $E_2$.  This concludes the proof.
\end{proof}

\begin{lemma} 
\label{cocco}
Let $X$ be a del Pezzo surface of degree one and let $Q$ be a conic on $X$.  
The divisor $D=-(n+1)K_X+(m+1)Q$ is capturable for $m,n \geq 0$ and $m+n \geq 1$.
\end{lemma}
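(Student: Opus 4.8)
The plan is to produce explicit capturing data for $D$. First, $D$ is ample: for every exceptional curve $E$ we have $D\cdot E=(n+1)+(m+1)(Q\cdot E)\ge n+1\ge 1$ because $Q$ is nef, and $-K_X\cdot D=n+2m+3\ge 4$; in particular $D\ne-2K_X$, so Lemma~\ref{captureK} captures $K_1$ and $K_2$ using $\cC$, and it remains to capture the exceptional curves. When $n\ge 3$ I would bypass the geometry entirely by writing $D=-4K_X+N$ with $N=(n-3)(-K_X)+(m+1)Q$, a non-negative combination of the nef divisors $-K_X$ and $Q$, hence nef; Lemma~\ref{kk} (applied with its parameter equal to $0$) then gives capturability. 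Thus the content of the lemma is concentrated in the cases $n\in\{0,1,2\}$, where the coefficient of $-K_X$ is too small for the certificates of Lemmas~\ref{ke}, \ref{kke} and~\ref{kk} to apply and one must exploit the conic bundle attached to $Q$.

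Let $b\colon X\to\bP^1$ be the conic bundle with fiber class $Q$. As in the proof of Lemma~\ref{dose} it has exactly seven reducible fibers $Q=S_i+T_i$ ($1\le i\le 7$), each a pair of exceptional curves meeting once; since $Q\cdot S_i=0=Q^2$, the components of distinct fibers are disjoint, which supplies many pairwise disjoint $(-1)$-curves. To capture an exceptional curve $C$ I would take $S=S_i$ and $T=S_j$ from two distinct fibers, so $S\cdot T=0$, and then the certificate of Lemma~\ref{captureC} is
\[
L_{ST}=-K_X+D-S-T-C=C'+T_i+T_j+(m-1)Q+n(-K_X),
\]
using $Q-S_i=T_i$ and $-2K_X-C=C'$. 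For $m\ge 1$ and $n\ge 1$ this divisor is automatically good: intersecting with any exceptional curve $V$, the only possible negative contribution comes from $V\in\{C',T_i,T_j\}$ and is absorbed by the term $n(-K_X)$, so $L_{ST}$ is nef; and since it contains the big divisor $n(-K_X)$ it is big, whence $\HX{1}{K_X+L_{ST}}=0$ by Kawamata--Viehweg.

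Two families remain delicate. When $n=0$ (so $m\ge 1$), the term $n(-K_X)$ is gone and nefness forces $C'\cdot T_i,\,C'\cdot T_j\ge 1$; here I would choose the two fibers $i,j$ so that $C'$ meets both $T_i$ and $T_j$ and so that $L_{ST}$ is not a multiple of a conic, exactly the bigness argument used in Lemma~\ref{lem:capture}. When $m=0$ (so $n\in\{1,2\}$ and $-K_X\cdot D\in\{4,5\}$) the single copy of $Q$ cannot be split, but there is a compensating simplification: for any disjoint pair of $(-1)$-curves $S,T$ the certificate $L_{ST}=-(n+2)K_X+Q-S-T-C$ has anticanonical degree $n+1\in\{2,3\}$. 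A nef divisor of anticanonical degree three is automatically big, since a nef non-big divisor is a multiple of a conic and hence of even degree (see the remark after Corollary~\ref{maquale}); a nef divisor of anticanonical degree two is either a conic or nef and big; in either case Lemma~\ref{captureC} applies. Hence for $m=0$ it suffices to produce, for each $C$, a disjoint pair $S,T$ making $L_{ST}$ nef, which I would do by choosing $S,T$ among the fiber components according to $C\cdot Q\in\{0,1,2,3,4\}$, using Lemma~\ref{dose} to guarantee, when $C\cdot Q=2$, that $C$ avoids a component in two fibers. After capturing every exceptional curve in this way (ordering the captures so that the two leftover curves, and the pairs used, remain available), the remaining two curves form a stopping criterion by Lemma~\ref{lem:easy}, so $D$ is capturable.

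The main obstacle is precisely this nef-and-big bookkeeping in the scarce-positivity cases $n\in\{0,1,2\}$: one must certify that for every one of the $240$ exceptional curves $C$ a valid disjoint pair $(S,T)$ is available, and this reduces to controlling the intersection numbers $C\cdot S_i$, $C\cdot T_i$ and $C\cdot Q$. The disjointness of components across distinct reducible fibers, together with Lemma~\ref{dose}, are the structural inputs that make such a choice possible in every case, with the curves satisfying $C\cdot Q\in\{0,4\}$ (those for which $C$ or $C'$ lies in a fiber) being the ones that need individual attention.
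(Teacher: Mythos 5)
Your overall strategy coincides with the paper's: capture $K_1,K_2$ via Lemma~\ref{captureK}, then use components of the reducible fibers of the conic bundle attached to $Q$ as the capturing pairs, with a case analysis on $Q\cdot C'$ and an appeal to Lemma~\ref{dose}. Two of your observations are genuine simplifications not in the paper: the reduction of $n\geq 3$ to Lemma~\ref{kk} (valid, since $(m+1)Q$ is nef), and the uniform certificate $L_{ST}=C'+T_i+T_j+(m-1)Q+n(-K_X)$ for $m,n\geq 1$, whose nefness and bigness you verify correctly. Up to that point the argument is sound.

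However, the two cases you yourself label ``delicate'' --- $n=0$ and $m=0$ --- are exactly where the paper's proof does its work, and you do not carry them out. Concretely: (a) for $n=0$ your prescription ``choose fibers $i,j$ so that $C'$ meets both $T_i$ and $T_j$'' is impossible when $Q\cdot C'=0$, since then $C'$ is itself a fiber component and is disjoint from every component of every other fiber; the rescue (which the paper supplies) is that for $n=0$ and $Q\cdot C'\leq 1$ one has $Q\cdot(D-C)<0$, so $D-C$ is not effective and the capture is vacuous --- this observation is absent from your proposal, and without it your construction simply has no candidate pair for those $C$. (b) For $m=0$, $n\in\{1,2\}$, the claim that a disjoint pair $(S,T)$ of fiber components making $L_{ST}$ nef exists ``for each $C$ according to $C\cdot Q$'' is precisely the content that must be proved; the paper does it by computing $B_{ST}^2=2Q\cdot C'-2C'\cdot S-2C'\cdot T$ and checking, case by case in $Q\cdot C'\in\{0,1,2,3,4\}$ (the value $4$ being the capture of the fiber components from each other, and $0$ requiring the asymmetric move $(\{C',T\},C)$ rather than two components of distinct fibers), that the relevant divisor is nef and either big or of anticanonical degree two. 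Your closing paragraph acknowledges this as ``the main obstacle'' and leaves it unverified, so the proposal is a correct plan rather than a proof: the existence of a valid pair for every one of the $240$ exceptional curves, in every residual case, is asserted but not established.
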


\begin{proof}
Suppose first that $n \geq 1$. Let $\cS$ be the set of curves contracted by $Q$ and for any $S\in \cS$, let $\tilde{S}$ be the unique curve such that $S+\tilde{S}=Q$. If $S,T,C$ are exceptional curves let $B_{ST}=-K_X+C'+Q-S-T$ and note that if $B_{ST}$ is nef and big then so is $L_{ST}:=-K_X+D-C-S-T=B_{ST}+mQ-(n-1)K_X$ because $n\geq 1$. Moreover $B_{ST}$ has anticanonical degree $2$ and $\hX{2}{B_{ST}}=0$ so by Riemann-Roch $B_{ST}$ is nef if and only if $B_{ST}^2=2Q\cdot C'-2C'\cdot S-2C'\cdot T\geq 0$. 

We first show that the curves $C$ in $\cC\setminus \cS$ can be captured with the curves in $\cS$. To do so, we split them into cases according to the value of $Q\cdot C'$.

If $Q\cdot C'=0$ choose $T\in \cS$ disjoint from $C'$. In this case $B_{C'T}=-K_X+\tilde{T}$ is nef and big, and $(\{C',T\},C)$ is a capture move. If $Q\cdot C'=1$ choose disjoint curves $S$ and $T\in \cS$ which are also disjoint from $C'$ and note that $B_{ST}$ has anticanonical degree $2$ and square $2$ so it is the pullback to $X$ of the anticanonical divisor of a del Pezzo surface of degree $2$ and thus it is nef and big; whence $(\{S,T\},C)$ is a capture move. If $Q\cdot C'=2$, by Lemma~\ref{dose} there exist disjoint curves $S$,$T$ in $\cS$ such that $S\cdot C'=T\cdot C'=0$ so $B_{ST}$ has square $4$ and hence $B_{ST}=-2K$ is nef and big. Thus $(\{S,T\},C)$ is a capture move in this case. Finally if $Q\cdot C'=3$, then let $S$ and $T$ be disjoint curves in $\cS$ such that $S\cdot C'=T\cdot C'=1$; then $B_{ST}$ has square $2$ and it is a nef and big divisor since it is the pullback to $X$ of the anticanonical divisor of a del Pezzo surface of degree $2$. Thus $(\{S,T\},C)$ is a capture move in this case. 

Now choose $S$, $T$ disjoint exceptional curves in $\cS$. We show that all curves $C$ in $\cS\setminus\{S,T\}$ can be captured from $\{S,T\}$. In this case $Q\cdot C'=4$ and $S\cdot C'$, $T\cdot C'\leq 2$, so $B_{ST}^2\geq 0$. If $B_{ST}$ is big the statement follows since so is $L_{ST}$. If $B_{ST}^2=0$ there are two cases to consider, either $n=1$ and $m=0$ and the statement follows from Lemma~\ref{captureC}, or at least one of $n$, $m$ is greater than one. In this case $L_{ST}=B_{ST}-(n-1)K_X+mQ$ is big because $B_{ST}\cdot Q=6\neq 0$. It follows that we can capture every exceptional curve in $\cS\setminus \{S,T\}$ from $\{S,T\}$.

Suppose now that $n=0$ and $m \geq 1$.  Note that if $Q\cdot C'\leq 1$ then $Q\cdot (D-C)=Q\cdot(K_X+C'+(m+1)Q)=-2+Q\cdot C'<0$ so $D-C$ is not an effective divisor. In this case $C$ is captured vacuously and thus we restrict our attention to curves $C$ with $Q\cdot C'\geq 2$.
Let $\cS$ be the set of exceptional curves contracted by $Q$ and for any $S\in \cS$ let $\tilde{S}$ be the unique curve such that $S+\tilde{S}=Q$. 

For any $C\in\cC\setminus \cS$ which satisfies $Q\cdot C'\geq 2$ there exist disjoint curves $S,T$ in $\cS$ such that $\tilde{S}$ and $\tilde{T}$ intersect $C'$. By construction $\tilde{S}+\tilde{T}+C'$ is nef and big and so is $L_{ST}:=-K_X+D-S-T-C=\tilde{S}+\tilde{T}+C'+(m-1)Q$. It follows that we can capture $\cC\setminus \cS$ with $\cS$.

Finally, fix two disjoint curves $S,T\in\cS$ and let $C$ be any curve in $\cS\setminus\{S,T\}$. Since $Q\cdot C=0$ we have $Q\cdot C'=4$ so $C'$ intersects both $\tilde{S}$ and $\tilde{T}$. It follows that $L_{ST}$ is nef and big and that $\cS$ can be captured from $\{S,T\}$.
\end{proof}

\begin{theorem} \label{non4}
Let $X$ be a sweeping del Pezzo surface of degree one and let $D$ be an ample divisor.  If $-K_X \cdot D \geq 4$, then $D$ is capturable.
\end{theorem}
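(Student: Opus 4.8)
The plan is to prove that every ample divisor $D$ of anticanonical degree at least four on a sweeping del Pezzo surface of degree one is capturable by reducing to the structural lemmas already established in this section. Since $D$ is nef, Corollary~\ref{maquale} and Lemma~\ref{mad} give a decomposition of $D$ into minimal ample divisors associated to a tower of contractions. The strategy is to use this decomposition to write $D$ in one of the normal forms appearing in Lemmas~\ref{ma7}, \ref{ma8}, \ref{ke}, \ref{kke}, \ref{kk} and \ref{cocco}, each of which already asserts capturability of a specific shape of divisor. The first step is therefore to organize the case analysis according to the nef threshold $n_8$ of $D$ with respect to $A_X = -K_X$, i.e.\ to write $D = -n K_X + \bar D$ where $\bar D = D + n K_X$ is nef but $D + (n+1)K_X$ is not, so that $n$ is the largest integer with $-nK_X \preceq D$ in the nef sense.

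Next I would distinguish cases on the size of $n = -\tfrac{1}{?}$—more precisely on how many copies of $-K_X$ can be extracted—and on whether the residual nef divisor $\bar D$ is big. When $n \geq 4$, Lemma~\ref{kk} applies directly with $N = \bar D$ and we are done. When $n$ is smaller, I would use Lemma~\ref{mad} to extract from $D$ either a conic $Q$, a twisted cubic $L$, or an anticanonical divisor of degree $d \le 3$ pulled back along a birational morphism $b \colon X \to Y$, thereby matching the hypotheses of Lemmas~\ref{ma7}, \ref{ma8}, or \ref{cocco}. For instance, if $\bar D$ is a multiple of a conic, Lemma~\ref{cocco} handles $D = -(n+1)K_X + (m+1)Q$; if $D = -K_X + A + N$ with $A$ the pullback of a minimal ample divisor from a surface of degree at least eight, Lemma~\ref{ma8} applies; and if $D = -K_X - K_Y + N$ with $\deg(Y) \ge 3$, Lemma~\ref{ma7} applies. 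The remaining low-threshold cases where the residual divisor is a single exceptional curve plus a nef divisor are covered by Lemmas~\ref{ke} and \ref{kke}.

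The main obstacle I anticipate is ensuring that the case analysis is genuinely \emph{exhaustive}: because $D$ is ample of anticanonical degree at least four, the decomposition from Corollary~\ref{maquale} could in principle mix several generators of the nef cone, and one must verify that every such combination falls under the hypotheses of at least one of the six capturability lemmas. The delicate point is the interface between "$D$ has a large anticanonical multiple" (handled by Lemma~\ref{kk}) and "$D$ has a small anticanonical threshold but a big geometric piece" (handled by the birational-morphism lemmas). I would argue that ampleness forces $-K_X \cdot D \ge 4$ to be realized either by a sufficiently large threshold $n$, or else by the presence of a conic or a pullback of a minimal ample divisor from a higher-degree del Pezzo surface, since an ample divisor of this degree cannot be a small multiple of a single conic alone. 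The bookkeeping to show no configuration is missed—particularly near the boundary of the nef cone where $\bar D$ ceases to be big—is where the real care is needed, but each individual case collapses immediately onto a previously proved lemma once the correct normal form is identified.
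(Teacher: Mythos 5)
Your proposal follows essentially the same route as the paper: write $D = -nK_X + N$ with $n\geq 1$ and $N$ the nef, non-ample residual, then dispatch the cases to Lemmas~\ref{kk}, \ref{cocco}, \ref{ma7}, \ref{ma8}, \ref{ke} and~\ref{kke} according to whether $N$ is zero, maps to $\bP^1$, or maps to a surface of the appropriate degree. The exhaustiveness you flag as the remaining concern is exactly what the paper settles by factoring $N = mA + N'$ through the morphism with connected fibers induced by $N$, which forces every configuration into one of those six lemmas.
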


\begin{proof}
Write $D = -nK_X+N$, where $n \geq 1$, the divisor $N$ is nef and not ample and 
$n-K_X \cdot N \geq 4$.  If $N=0$, then $n \geq 4$ and we conclude using 
Lemma~\ref{kk}.  If $N \neq 0$, write $N = mA+N'$ where $X \to Y$ is a morphism 
with connected fibers, $A$ is the pull-back to $X$ of the minimal ample divisor on 
$Y$, $N'$ is the pull-back of a nef divisor on $Y$ and $m \geq 1$.  If $Y$ 
is a surface, then we conclude using Lemmas~\ref{ma7},~\ref{ma8},~\ref{ke} and~\ref{kke}.  If $Y \simeq \bP^1$, then we conclude using 
Lemma~\ref{cocco}.
\end{proof}

\section{Ample divisors of anticanonical degree three}
\label{S:ample three}

The only ample divisors $D$ that seem to elude the strategy of \S\ref{S:ample four} are those 
of anticanonical degree three.  These divisors are $-3K_X$, $-2K_X+E$, $-K_X+Q$, 
where $E$ is any exceptional curve and $Q$ is any conic.  Recall that there is a surjection
\[
(k[\cG]/J_X)_D\to (k[\cG]/I_X)_D;
\]
see \S\ref{S:notation}.  We show that the dimension of $(k[\cG]/J_X)_D$ is at most $\hX{0}{D}=\dim(k[\cG]/I_X)$ and conclude that $(I_X)_D=(J_X)_D$. 

\begin{lemma} 
\label{basis-K+G}
Let $X$ be a del Pezzo surface of degree one and let  $A,B,G$ be exceptional curves with $A+B=-K_X+G$. Then $k_1g,k_2g$ and $ab$ form a basis for $\HX{0}{-K_X+G}$. In particular, if $C,D$ are exceptional curves with $C+D=-K_X+G$ then $cd=\alpha_1k_1g+\alpha_2 k_2g+\alpha_3ab$ and the coefficient $\alpha_3$ is non-zero.
\end{lemma}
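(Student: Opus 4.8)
The plan is to first establish that $\hX{0}{-K_X+G}=3$ and then exhibit $k_1g$, $k_2g$, $ab$ as three linearly independent elements of this space; since all three manifestly lie in $\HX{0}{-K_X+G}$ (as $A+B=-K_X+G$ and $K_1,K_2\in|{-K}_X|$), independence will give a basis. For the dimension I would combine Riemann--Roch with Kawamata--Viehweg vanishing, which is available in every characteristic for rational surfaces. Since $G$ is a $(-1)$-curve we have $G^2=-1$ and $K_X\cdot G=-1$, so $-2K_X+G$ meets every exceptional curve positively and is therefore ample by Proposition~\ref{clane}. Writing $-K_X+G=K_X+(-2K_X+G)$, vanishing yields $\HX{i}{-K_X+G}=0$ for $i>0$, and a direct computation gives $(-K_X+G)\cdot(-2K_X+G)=4$, so the Euler characteristic is $1+2=3$.

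Next I would identify the subspace of sections vanishing along $G$. Because $G$ is the irreducible zero divisor of $g$, there is a short exact sequence
\[
0\to\cO_X(-K_X)\xrightarrow{\,\cdot g\,}\cO_X(-K_X+G)\to\cO_G(-K_X+G)\to 0 .
\]
Here $(-K_X+G)\cdot G=0$ and $G\cong\bP^1$, so the restricted sheaf is trivial and $\HX{0}{-K_X+G}$ maps onto a one-dimensional space with kernel exactly $g\cdot\HX{0}{-K_X}$. Since $K_1,K_2\in|{-K}_X|$ are distinct and $\hX{0}{-K_X}=2$, the sections $k_1,k_2$ form a basis of $\HX{0}{-K_X}$, and hence $k_1g,k_2g$ span precisely the two-dimensional subspace of sections of $-K_X+G$ that vanish on $G$.

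It then remains to check that $ab$ does not vanish on $G$. I would first observe that $A\neq G$ and $B\neq G$: otherwise $A+B=-K_X+G$ would force one of $A,B$ to equal $-K_X$, which is impossible since $(-K_X)^2=1$ while an exceptional curve has self-intersection $-1$. Consequently $A\cdot G\geq 0$ and $B\cdot G\geq 0$, while their sum is $(A+B)\cdot G=(-K_X+G)\cdot G=0$; hence both vanish and $A,B$ are disjoint from $G$. The zero divisor of $ab$ is $A+B$, which therefore does not meet $G$, so $ab|_G$ is a nonzero constant. Thus $ab$ lies outside $\langle k_1g,k_2g\rangle$, and the three sections form a basis.

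Finally, for the last assertion I would run the identical intersection argument on the exceptional curves $C,D$ with $C+D=-K_X+G$: neither equals $G$, so both are disjoint from $G$ and $cd|_G$ is a nonzero constant. Writing $cd=\alpha_1k_1g+\alpha_2k_2g+\alpha_3ab$ in the basis just found and restricting to $G$ kills the first two terms, since $k_1g$ and $k_2g$ vanish on $G$, leaving $cd|_G=\alpha_3\,ab|_G$; as both restrictions are nonzero, $\alpha_3\neq 0$. The whole argument is essentially a dimension count combined with restriction to $G$, and the only place demanding care is the intersection-theoretic bookkeeping showing that $A,B$ (and $C,D$) are genuinely distinct from, and hence disjoint from, $G$—everything cohomological reduces to the standard vanishing already invoked in the paper.
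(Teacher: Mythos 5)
Your proof is correct, and it takes a genuinely different route from the paper's. The paper argues geometrically: it views $|{-K}_X+G|$ as inducing the degree-two map to $\bP^2$ that contracts $G$ (the anticanonical map of the degree-two del Pezzo surface obtained by contracting $G$, branched along a smooth quartic $R$), identifies $ab$, $k_1g$, $k_2g$ with three lines in $\bP^2$ --- a bitangent to $R$ and two lines through the image point of $G$ --- and deduces independence from the fact that the image of $G$ lies on no bitangent (else $X$ would carry a $(-2)$-curve), so the three lines have no common point. You instead compute $\hX{0}{-K_X+G}=3$ by Riemann--Roch plus Kawamata--Viehweg (correctly citing its characteristic-independence for rational surfaces, and correctly checking that $-2K_X+G$ is ample via Proposition~\ref{clane}), and then use the ideal-sheaf sequence of $G$ to identify $\langle k_1g,k_2g\rangle$ as exactly the sections vanishing on $G$, while $ab$ restricts to a nonzero constant on $G$ because $A\cdot G=B\cdot G=0$ forces $A+B$ to be disjoint from $G$. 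Your intersection-theoretic verification that $A,B\neq G$ and hence $A\cdot G=B\cdot G=0$ is the cohomological counterpart of the paper's ``no bitangent through the image of $G$'' claim, and is arguably cleaner since it avoids invoking the branch-quartic geometry and the $(-2)$-curve argument. Your restriction-to-$G$ argument also handles the ``in particular'' clause more explicitly than the paper does: restricting $cd=\alpha_1k_1g+\alpha_2k_2g+\alpha_3ab$ to $G$ kills the first two terms and leaves $\alpha_3\,ab|_G=cd|_G\neq 0$. Both proofs are sound; yours trades the classical plane-quartic picture for a standard exact-sequence computation that is self-contained given the vanishing theorem already used throughout the paper.
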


\begin{proof}
The morphism $X \to \mathbb{P}^2$ associated to $|{-K}_X+G|$ contracts $G$ and is ramified along a smooth plane quartic $R$. The image of $G$ is a point not lying on any bitangent to $R$ since otherwise $X$ would have had a $(-2)$-curve. The images of $K_1$ and $K_2$ in $\mathbb{P}^2$ are distinct lines through the image of $G$ and therefore the three lines in $\mathbb{P}^2$ corresponding to $A+B$, $K_1+G$ $K_2+G$ have no common point and are independent.
\end{proof}

\begin{lemma} 
\label{tretre}
Let $X$ be a del Pezzo surface of degree one; then the ideal $I_X$ has no minimal generators in degree $D=-3K_X$.
\end{lemma}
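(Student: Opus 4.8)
My plan is to carry out the strategy of \S\ref{S:ample three} for the class $D=-3K_X$. Set $V:=(k[\cG]/J_X)_{-3K_X}$. Because $J_X\subseteq I_X$ there is a surjection $V\twoheadrightarrow (k[\cG]/I_X)_{-3K_X}=\Cox(X)_{-3K_X}$, so it suffices to bound $\dim_k V\leq \hX{0}{-3K_X}$; equality then forces $(I_X)_{-3K_X}=(J_X)_{-3K_X}$, and since $I_X$ is generated in anticanonical degrees at least two and $-3K_X$ has anticanonical degree three, every element of $(J_X)_{-3K_X}$ is a product of a quadratic relation with a generator, so this equality says exactly that $I_X$ has no minimal generators in degree $-3K_X$. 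Riemann--Roch gives $\chi(-3K_X)=1+\frac{1}{2}(-3K_X)\cdot(-4K_X)=7$, and Kawamata--Viehweg vanishing (valid in every characteristic for rational surfaces, by the remark following Theorem~\ref{T:main theorem}) kills $h^1$ and $h^2$, so $\hX{0}{-3K_X}=7$. Thus I aim to span $V$ by seven elements.

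The monomials of Picard degree $-3K_X$ are products of three generators. Since no exceptional curve has class $-K_X$, a monomial cannot have exactly two factors among $\{k_1,k_2\}$, leaving three families: the four $k$-cubics $k_1^3,k_1^2k_2,k_1k_2^2,k_2^3$; the products $k_i\,cc'$ with $C+C'=-2K_X$ a tritangent pair; and the exceptional triples $efg$ with $E+F+G=-3K_X$. Let $W\subseteq V$ be the span of the first two families. To bound $\dim W\leq 6$ I use that $\Cox(X)$ is generated in anticanonical degree one (\cite{BP}*{Theorem~3.2}): the four-dimensional space $\HX{0}{-2K_X}$ is therefore spanned by the images of the monomials $k_ik_j$ and $cc'$. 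As $k_1^2,k_1k_2,k_2^2$ are linearly independent, I may complete them to a basis of $\HX{0}{-2K_X}$ by a single tritangent $e_0e_0'$. Consequently every tritangent $cc'$ is congruent, modulo the quadrics $(J_X)_{-2K_X}$, to a combination of $k_1^2,k_1k_2,k_2^2$ and $e_0e_0'$; multiplying by $k_i$ shows each monomial $k_i cc'$ reduces modulo $J_X$ to $k$-cubics together with $k_1e_0e_0'$ and $k_2e_0e_0'$. Hence $W$ is spanned by six elements.

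It remains to prove that the exceptional triples contribute at most one further dimension, i.e.\ $\dim(V/W)\leq 1$; the engine is Lemma~\ref{basis-K+G}. For a triple $efg$ one has $E+F=-K_X+G'$, so the lemma gives $ef\equiv \lambda_3\,a_Gb_G+\lambda_1 k_1g'+\lambda_2 k_2g'$ modulo $(J_X)_{-K_X+G'}$, with $\lambda_3\neq 0$ and $a_Gb_G$ a fixed pair with $A+B=-K_X+G'$. Multiplying by $g$ and observing that $gg'$ is a tritangent (so $k_igg'\in W$), I conclude that every exceptional triple having $G$ as a factor equals, modulo $W$, a scalar multiple of the single element $[a_Gb_G\,g]$. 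Thus the image $\tau_G\subseteq V/W$ of all triples through $G$ is at most one-dimensional. Finally, a single triple $efg$ witnesses $[efg]$ simultaneously in $\tau_E$, $\tau_F$ and $\tau_G$, so whenever $[efg]\neq 0$ these three lines coincide.

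I expect the genuine difficulty to lie exactly in this last point: I must show that all the nonzero lines $\tau_G$ agree, so that $V/W$ is at most one-dimensional. The relevant combinatorial input is that any two exceptional curves occurring in a common triple meet with intersection number two --- indeed, for $E+F+G=-3K_X$ the identities $E\cdot F+E\cdot G=E\cdot F+F\cdot G=E\cdot G+F\cdot G=4$ force each pairwise product to equal $2$ --- and that the resulting ``share a triple'' graph on the exceptional curves is connected, which can be checked directly from Table~\ref{tab:-1curves1} or deduced from the transitivity of the Weyl group action on exceptional curves. Granting connectivity, all nonzero $\tau_G$ coincide, so $\dim V\leq \dim W+1\leq 7=\hX{0}{-3K_X}$, and the lemma follows.
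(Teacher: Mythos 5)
Your proposal is correct and shares the paper's skeleton: the same reduction to the bound $\dim(k[\cG]/J_X)_{-3K_X}\leq \hX{0}{-3K_X}=7$, the same three families of monomials, the same six\nobreakdash-dimensional bound on the span of the monomials involving $k_1,k_2$, and Lemma~\ref{basis-K+G} as the engine. Where you genuinely diverge is in organizing the key step, namely that the exceptional triples contribute at most one extra dimension. The paper fixes one triple $p=abc$ and reduces an arbitrary triple $q=def$ to the span of $p$ and the $k_i$-monomials by a case analysis on the intersection pattern of $\{A,B,C\}$ and $\{D,E,F\}$, constructing intermediate triples geometrically via auxiliary exceptional curves. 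You instead attach to each exceptional curve $G$ the line $\tau_G\subseteq V/W$ and observe that $\tau_E=\tau_F=\tau_G$ for every triple $\{E,F,G\}$ (whether or not the class $[efg]$ vanishes, precisely because $\alpha_3\neq 0$ in Lemma~\ref{basis-K+G}), then conclude by connectivity of the graph on exceptional curves whose edges are the pairs with intersection number two; the paper's case analysis is in effect a hands-on proof that this graph has small diameter, so your version is a clean repackaging of the same idea. One caveat: transitivity of the Weyl group action does \emph{not} by itself yield connectivity --- it gives only vertex-transitivity, and a vertex-transitive graph can be disconnected --- so you must either exhibit paths from Table~\ref{tab:-1curves1} (for instance $E_1$ and $E_2$ are both adjacent to $4L-2E_1-2E_2-2E_3-E_4-\cdots-E_8$, while $L-E_1-E_2$ is adjacent to $3L-2E_3-E_2-E_4-\cdots-E_8$, which is adjacent to $E_3$), or note that the only nontrivial block system for $W(E_8)$ on the $240$ curves is the pairing $E\leftrightarrow E'$, which cannot be the partition into components since the graph has edges and $E\cdot E'=3\neq 2$. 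With that point made explicit, your argument is complete.
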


\begin{proof}
The only ways of writing $-3K_X$ as a sum of three effective divisors are given in the following table.
\begin{equation} \label{che}
\begin{array}{|l|l|}
\hline 
{\text{Monomial}} & \hfill {\text{Description}} \hfill \\
\hline 
\hline 
h_1 h_2 h_3 & h_1, h_2, h_3 \in \{k_1,k_2\} \\[5pt]
\hline 
h a a' & h \in \{k_1,k_2\} \:\:\: , \:\:\: A \in \cC \\[5pt]
\hline 
abc & \begin{array}{l}
A , B , C \in \cC \\[2pt]
A \cdot B = A \cdot C = B \cdot C = 2 \end{array}\\
\hline 
\end{array}
\end{equation}
Indeed, let $-3K_X=A+B+C$ be any expression of $-3K_X$ as a sum of three effective 
divisors.  If one among $A,B,C$ is in $|{-K}_X|$, then we are in one of the first two cases 
above.  If $A,B,C \in \cC$, then intersecting with $A,B,C$ successively both sides of the 
equation $-3K_X=A+B+C$ we obtain the system 
\[
\left\{ \begin{array} {rcl}
A \cdot B + A \cdot C & = & 4 \\[5pt]
A \cdot B + B \cdot C & = & 4 \\[5pt]
A \cdot C + B \cdot C & = & 4
\end{array} \right.
\]
whose only solution is $A \cdot B = A \cdot C = B \cdot C = 2$.

By definition $(I_X)_2 = (J_X)_2$; we deduce that the span of the monomials of the 
first two forms in (\ref{che}) in $k[\cG]/J_X$ has dimension at most six, being the image of $\HX{0}{-K_X} \otimes \HX{0}{-2K_X}$ under the 
multiplication map (note that $\dim \HX{0}{-K_X} = 2$, $\dim \HX{0}{-2K_X} = 4$ and 
the linearly independent elements $k_1 \otimes k_2^2 e - k_2 \otimes k_1k_2$ and $k_2 \otimes k_1^2 e - k_1 \otimes k_1k_2$ are in the kernel).  Fix any monomial $p = a b c$ of the third form in (\ref{che}) and denote the span of $p$ together with the monomials of the first two forms in (\ref{che}) 
by $V$.  Let $q = d e f$ be any monomial of the third form in (\ref{che}); we prove that 
$q$ can be written as the sum of an element of $V$ and an element of $J_X$.

The result is clear if $c =f$, since then we can use the relation coming from degree 
$A+B$ involving the monomials $k_1 g c$, $k_2 g c$, $a b c$ and $d e c$ to conclude (see Lemma~\ref{basis-K+G}).  By the same reasoning we reduce to the case in which $\{a,b,c\} \cap \{d,e,f\} = \emptyset$.

Suppose that $A \cdot D = 2$; then the divisor $G := -3K_X-A-D$ satisfies 
$G^2=K_X \cdot G = -1$, and hence $G$ is an exceptional curve.  Moreover from the 
fact that $D \cdot G = 2$, we deduce that $G$ is an exceptional curve on the del Pezzo 
surface obtained by contracting $D'$ and whose anticanonical divisor pulled-back to 
$X$ is $E+F$; let $H:=E+F-G$ and note that $H$ is also an exceptional curve.  In 
particular we can use a relation coming from degree $E+F$ involving the monomials 
$d k_1 d'$, $d k_2 d'$, $d g h$ and $d e f$ to reduce to the case 
$\{a,b,c\} \cap \{d,e,f\} \neq \emptyset$.

Finally, if $A \cdot D = A \cdot E = A \cdot F = 1$, then $A$ is a conic in the del Pezzo 
surface obtained by contracting $F'$.  From Table~\ref{tab:-1curves1} it follows that there are exceptional curves intersecting any given conic twice; denote the strict transform 
in $X$ of one such curve of $X$ by $H$ and the exceptional curve $E+F-H$ by $J$.  Thus we can 
use a relation coming from degree $E+F$ involving the monomials $d k_1 d'$, $d k_2 d'$, 
$d h j$ and $d e f$ to reduce to the case in which $A \cdot D = 2$.
\end{proof}

\begin{lemma} 
\label{duee}
Let $E$ be an exceptional curve on $X$; then the ideal $I_X$ has no minimal generators in degree 
$D=-2K_X+E$.
\end{lemma}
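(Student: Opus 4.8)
The plan is to bound $\dim_k(k[\cG]/J_X)_D$ from above by $\hX{0}{D}$ and then invoke the surjection $(k[\cG]/J_X)_D\to(k[\cG]/I_X)_D$ recalled in \S\ref{S:notation}: once the two dimensions agree the surjection is an isomorphism, so $(I_X)_D=(J_X)_D$, and since $J_X$ is generated in anticanonical degree two while $D=-2K_X+E$ has anticanonical degree three, every element of $(I_X)_D$ is a combination of quadrics times linear forms and hence $I_X$ has no minimal generators in degree $D$. Because $D-K_X=-3K_X+E$ is ample, Kawamata--Viehweg vanishing and Riemann--Roch give $\hX{0}{D}=6$. Every monomial of degree $D$ is a product of three generators of anticanonical degree one, and writing $D$ as a sum of three classes in $\cG$ shows there are three kinds: the monomials $k_1^2e,\,k_1k_2e,\,k_2^2e$; the monomials $k_i\cdot ab$ with $A+B=-K_X+E$; and purely exceptional monomials $abc$ with $A+B+C=-2K_X+E$.

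First I would dispose of the monomials involving some $k_i$. Applying Lemma~\ref{basis-K+G} in degree $-K_X+E$ (valid modulo $J_X$ since it has anticanonical degree two), each $ab$ with $A+B=-K_X+E$ is congruent modulo $J_X$ to a combination of $k_1e,\,k_2e$ and a single fixed product $a_0b_0$; multiplying by $k_1$ and $k_2$ shows every $k_i$-monomial lies, modulo $J_X$, in the span $U$ of $\{k_1^2e,\,k_1k_2e,\,k_2^2e,\,k_1a_0b_0,\,k_2a_0b_0\}$. Thus $\dim_k U\le 5$, the count dropping below six precisely because $k_1\cdot k_2e=k_2\cdot k_1e$. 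It then remains to show that every purely exceptional monomial reduces modulo $J_X$ into $U+k\cdot M_0$ for one fixed purely exceptional monomial $M_0$.

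Intersecting $A+B+C=-2K_X+E$ with $E$ gives $A\cdot E+B\cdot E+C\cdot E=1$, which splits the purely exceptional monomials into two families: those with no factor equal to $E$ (so exactly one factor meets $E$, the other two disjoint from $E$), and those with a factor equal to $E$, which must take the form $e\,b\,b'$ with $B+B'=-2K_X$ (this family includes $e\,e\,e'$). For the family $e\,b\,b'$ I would use that $\HX{0}{-2K_X}$ is four-dimensional and spanned by $k_1^2,\,k_1k_2,\,k_2^2$ together with any single product $d_0d_0'$ chosen outside the span of the $k_ik_j$; expanding $bb'$ in this basis modulo $J_X$ and multiplying by $e$ gives $e\,b\,b'\equiv\beta\,e\,d_0d_0'\pmod{U}$, so the whole family collapses to multiples of $M_0:=e\,d_0d_0'$.

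For the remaining family, say $A\cdot E=C\cdot E=0$ and $B\cdot E=1$, the class $G:=-K_X+E-C$ satisfies $G^2=K_X\cdot G=-1$, hence is an exceptional curve with $A+B=-K_X+G$. The key move is to run Lemma~\ref{basis-K+G} with the reference pair $\{E,C'\}$, which also sums to $-K_X+G$: expanding $ab$ in the basis $\{k_1g,\,k_2g,\,ec'\}$ and multiplying by $c$ yields, modulo $J_X$, two $k_i$-monomials $k_i(gc)$ with $G+C=-K_X+E$ (hence in $U$) together with a multiple of $e\,c'\,c=e\,(cc')$, which belongs to the first family. Consequently every purely exceptional monomial reduces modulo $J_X$ into $U+k\,M_0$, so $(k[\cG]/J_X)_D$ is spanned by at most six elements, matching $\hX{0}{D}$, and the lemma follows. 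I expect the main obstacle to be engineering this last reduction: peeling off a sub-pair via Lemma~\ref{basis-K+G} fails exactly for $e\,b\,b'$ with $B\cdot E=1$ and for $e\,e\,e'$, since these contain no factor disjoint from $E$; the resolution is to route the $E$-divisible monomials through degree $-2K_X$ and to choose the reference pair $\{E,C'\}$ so that the surviving exceptional term lands back in that degree.
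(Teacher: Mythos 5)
Your proof is correct and follows essentially the same route as the paper's: the same three-way classification of degree-$D$ monomials, the same dimension count $5+1=6=\hX{0}{D}$ against the surjection $(k[\cG]/J_X)_D\to\Cox(X)_D$, and the same two applications of Lemma~\ref{basis-K+G}. Your ``key move'' with the reference pair $\{E,C'\}$ is literally the paper's relation in degree $A+B$ involving $ab$, $ge$, $k_1f$, $k_2f$ (your $G$ is its $F=K_X+A+B$ and your $C'$ is its $G=A+B-E$), so the two arguments coincide up to relabeling.
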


\begin{proof}
The monomials of degree $D$ are of the following forms:
\[
\begin{array} {|l|l|}
\hline 
{\text{Monomial}} & \hfill {\text{Description}} \hfill \\
\hline
\hline
k s & k \in \{k_1,k_2\} \,\,,\,\, s \in k[\cG]_{-K_X+E}\\
\hline
a a' e & A \cdot A' = 3 \\
\hline 
a b c & \begin{array}{l}
A \cdot E = 1 \\
B \cdot E = C \cdot E = 0 \\
A \cdot B = A \cdot C = 2 \\
B \cdot C = 1 \end{array} \\
\hline
\end{array}
\]
Indeed let $m \in k[\cG]_D$ be a monomial.  If $k \in \{k_1,k_2\}$, or $e$ divides $m$, then 
$m$ is of one of the first two forms.  Otherwise let $m=a b c$, with 
$\{a,b,c\} \cap \{k_1,k_2,e\} = \emptyset $; if two of the curves $A,B,C$ have intersection 
number three, then the remaining one is $E$, which we are excluding.   If the intersection 
numbers among the curves $A,B,C$ are all at most two, then the required conditions follow 
multiplying successively the equality $A+B+C=-2K_X+E$ by $A,B,C$.

We show that the monomials of degree $D$ span a subspace of dimension at most six of 
$(k[\cG]/J_X)_D$; since $\dim \Cox(X)_D = 6$, the result follows.

First, the image of $\HX{0}{-K_X} \otimes \HX{0}{-K_X+E}$ in $k[\cG]_D$ has dimension at 
most five: $\dim \HX{0}{-K_X} = 2$, $\dim \HX{0}{-K_X+E} = 3$ and 
the element $k_1 \otimes k_2 e - k_2 \otimes k_1 e$ is in the kernel.

Second, the span of the monomials of the first two forms has dimension at most six.  Let 
$a a' e , b b' e$ be any two monomials of the second form.  There is a quadratic relation $q$ 
involving $a a' , b b' , k_1^2 , k_1 k_2 , k_2^2$ since these five vectors correspond to five 
elements of the four dimensional space $\HX{0}{-2K_X}$.  Moreover $a a'$ and $b b'$ 
are independent from $k_1^2 , k_1 k_2 , k_2^2$, since the monomials in $k_1,k_2$ 
correspond to sections having a base-point, and neither of the remaining elements $a a'$ 
and $b b'$ vanishes at the base-point.  Thus in the relation $q$ the coefficients of $a a'$ and 
$b b'$ are both non-zero.  We deduce that the span of the elements of the first two forms has 
dimension at most six in $(k[\cG]/J_X)_D$.

Third, let $a b c$ be a monomial of the third form.  The divisor $A+B$ contracts the unique 
exceptional curve $F := K_X+A+B$, and the divisor $G := A+B-E$ is an exceptional curve 
on the del Pezzo surface $Y$ obtained from $X$ by contracting $F$.  Therefore there is a 
quadratic relation involving the monomials $a b , g e , k_1 f , k_2 f$ and the monomials 
$a b$ and $g e$ are independent from $k_1 f , k_2 f$, by Lemma~\ref{basis-K+G}.  Thus we may 
use this relation to write the image of $a b c$ in $k[\cG]/J_X$ as a combination of divisors 
of the first two forms and the proof is complete.
\end{proof}

\begin{lemma} 
\label{kaco}
Let $Q$ be a conic on $X$; then the ideal $I_X$ has no minimal generators in degree $D=-K_X+Q$.
\end{lemma}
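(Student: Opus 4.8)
The plan is to follow the pattern of Lemmas~\ref{tretre} and~\ref{duee}. Since $J_X \subseteq I_X$ yields a surjection $(k[\cG]/J_X)_D \to (k[\cG]/I_X)_D = \Cox(X)_D$, it suffices to prove $\dim_k (k[\cG]/J_X)_D \leq \hX{0}{D}$; then the surjection is an isomorphism, $(I_X)_D = (J_X)_D$, and no minimal generator of $I_X$ lies in degree $D$. First I would record that $D = -K_X+Q$ is ample of anticanonical degree $-K_X\cdot(-K_X+Q) = 1+2 = 3$, and compute $\hX{0}{D} = 5$ from Riemann--Roch, using $(-K_X+Q)^2 = 5$ and $K_X\cdot(-K_X+Q) = -3$, together with vanishing applied to $D = K_X + (-2K_X+Q)$, whose twist $-2K_X+Q$ is ample.

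Next I would classify the degree-$D$ monomials, each a product of three generators. Those with a factor $k_i$ have the form $k\cdot(d_1 d_2)$ with $D_1+D_2 = Q$ the components of a reducible fiber of the conic bundle associated to $Q$: indeed $Q+K_X$ is not effective, so no degree-$Q$ factor can itself involve a $k_i$, and $K_X+Q$ being noneffective rules out two $k_i$'s. Since every relation in the anticanonical-degree-two class $Q$ is quadratic and $(I_X)_2 = (J_X)_2$, the degree-$Q$ monomials span exactly $\HX{0}{Q}$, of dimension two, in $k[\cG]/J_X$. Because $\Cox(X)$ is a UFD (Lemma~\ref{lem:easy}), every member of $|{-K}_X|$ is irreducible (anticanonical degree one cannot split), so $k_1,k_2$ are non-associate primes; as $Q+K_X$ is noneffective the multiplication $\HX{0}{-K_X}\otimes\HX{0}{Q}\to\HX{0}{D}$ is injective, and these monomials span a four-dimensional subspace $V_a \subseteq (k[\cG]/J_X)_D$. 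It then remains to show that the purely exceptional monomials $abc$ with $A+B+C = -K_X+Q$ add at most one further dimension modulo $V_a$.

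These monomials split, according to $Q\cdot A+Q\cdot B+Q\cdot C = 2$, into two types: one bisection and two fiber components, or two sections and one fiber component. Intersecting $A+B+C=-K_X+Q$ successively with the factors shows that in either type some pair, say $\{A,B\}$, satisfies $A\cdot B = 2$ while the third factor $C$ is a fiber component with $Q\cdot C = 0$. The reduction move is then: set $G := A+B+K_X$, so that $G^2 = -1$ and $-K_X\cdot G = 1$, whence $G$ is an exceptional curve with $A+B = -K_X+G$; moreover $G+C = (A+B+C)+K_X = Q$ and $G\cdot C = 1$, so $\{G,C\}$ is itself a reducible fiber and $gc$ is a degree-$Q$ factor. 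By Lemma~\ref{basis-K+G}, $\{k_1 g, k_2 g, ab\}$ is a basis of $\HX{0}{-K_X+G}$, and any other exceptional decomposition $A'+B' = -K_X+G$ satisfies $a'b' = \alpha_1 k_1 g + \alpha_2 k_2 g + \alpha_3 ab$ with $\alpha_3 \neq 0$; this is a quadratic relation, so it lies in $J_X$. Multiplying by $c$ and using $gc\in$ degree $Q$ gives $a'b'c \equiv \alpha_3\, abc \pmod{V_a,\,J_X}$, so all monomials attached to a fixed fiber $\{G,C\}$ are proportional to $abc$ modulo $V_a$.

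The main obstacle is to upgrade these local moves into a proof that \emph{all} purely exceptional monomials are proportional, modulo $V_a$ and $J_X$, to a single fixed monomial $p$, for then $\dim_k(k[\cG]/J_X)_D \leq \dim_k V_a + 1 = 5$. Concretely I would show that the reduction graph on these monomials is connected: a type-I monomial has two pairs meeting in $2$, which lets one pivot between the two associated fibers and thereby change the fiber component, while the relation above changes the section/bisection pair for a fixed fiber and, by producing a bisection-plus-fiber decomposition of some $-K_X+G$, links type-II monomials to type-I ones. Carrying this out requires a case analysis on the conic-bundle combinatorics of Table~\ref{tab:-1curves1}, in the spirit of Lemmas~\ref{tretre} and~\ref{duee}; alternatively one fixes $p$ and directly exhibits the reducing chains. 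Establishing this connectivity, rather than any cohomological input, is the crux; once it holds, the bound $\dim_k(k[\cG]/J_X)_D \leq 5 = \hX{0}{D}$ forces $(I_X)_D = (J_X)_D$ and the lemma follows.
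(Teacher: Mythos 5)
Your setup, monomial classification, and reduction move coincide with the paper's: bound $\dim_k(k[\cG]/J_X)_D$ by $4+1=5=\hX{0}{D}$, where the $4$ comes from the monomials $k_i\,(d_1d_2)$ with $D_1+D_2=Q$ and the $+1$ from a single fixed purely exceptional monomial, and use Lemma~\ref{basis-K+G} multiplied through by the fiber component $c$ to identify, modulo $V_a$ and $J_X$, all purely exceptional monomials sharing a fixed fiber-component factor. (Your side claims that $\hX{0}{D}=5$ and that $V_a$ is exactly four-dimensional are correct, though only ``at most four'' is needed.) The genuine gap is precisely the step you flag as the crux: you assert that the reduction graph is connected but defer the proof to ``a case analysis on the conic-bundle combinatorics of Table~\ref{tab:-1curves1}'' without carrying it out, so the proposal does not actually establish the bound.

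That missing step is short and needs no case analysis; it is how the paper closes the argument. A monomial of your second type (two sections, one fiber component $\bar E_1$) lies in the family of $\bar E_1$ and is hence proportional, modulo $V_a$, to any first-type monomial in that family, so it suffices to connect first-type monomials $p=e\,e_1e_2$ and $q=\bar e\,\bar e_1\bar e_2$. If they share a fiber component you are done by your move. Otherwise, after relabelling one may choose $\bar E_1$ and $E_1$ lying in \emph{distinct} reducible fibers (each pair $\{E_1,E_2\}$ and $\{\bar E_1,\bar E_2\}$ already consists of components of two distinct fibers, since disjoint components of one fiber do not exist). Then $\tilde E:=-K_X+Q-\bar E_1-E_1$ satisfies $\tilde E^2=-1$ and $-K_X\cdot\tilde E=1$, hence is an exceptional curve, and the monomial $\tilde e\,\bar e_1e_1$ lies simultaneously in the family of $\bar E_1$ (linking it to $q$) and in the family of $E_1$ (linking it to $p$). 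So any two purely exceptional monomials are connected by at most two applications of your move, the purely exceptional monomials contribute at most one dimension beyond $V_a$, and $(I_X)_D=(J_X)_D$ follows. Without this observation --- that \emph{any} two fiber components from distinct fibers are directly linked because the complementary class is exceptional --- your connectivity claim remains unproved.
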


\begin{proof}
The monomials of degree $D$ are of the following forms:
\[
\begin{array} {|l|l|}
\hline 
{\text{Monomial}} & \hfill {\text{Description}} \hfill \\
\hline
\hline
k s & k \in \{k_1,k_2\} \,\,,\,\, s \in k[\cG]_Q \\
\hline
e f e_1 & \begin{array} {l}
E_1 \cdot Q = 0 \\
E \cdot F = 2 \\
E_1 \cdot (E+F) = 2 \\
Q \cdot E = Q \cdot F = 1 \end{array} \\
\hline 
e e_1 e_2 & \begin{array}{l} 
E_1 \cdot Q = E_2 \cdot Q = E_1 \cdot E_2 = 0 \\
Q \cdot E = E_1 \cdot E = E_2 \cdot E = 2 \end{array} \\
\hline
\end{array}
\]
Note that $E_1,E_2$ are disjoint components of reducible fibers of the conic 
bundle associated to $Q$.  Indeed let 
\begin{equation} \label{primi}
-K_X+Q \sim A+B+C
\end{equation}
Multiplying both sides of (\ref{primi}) by $Q$ we find 
\[
2 = A \cdot Q + B \cdot Q + C \cdot Q ;
\]
thus at least one of the intersection products is zero.  Suppose that $C \cdot Q = 0$, 
and let $\tilde C = Q-C$; then $A+B = -K_X+\tilde C$ is a reducible divisor in the linear 
system associated to the pull-back of the anticanonical divisor on the del Pezzo surface 
of degree two obtained from $X$ by contracting $\tilde E$.  The first case in 
(\ref{primi}) corresponds to the sections containing a divisor in the linear system 
$|{-K}_X|$; the second to one not containing it and containing two curves not in the 
linear system $|Q|$; the last one to one containing two elements of $|Q|$.

Since $(k[\cG]/J_X)_2 = \Cox (X)_2$, it follows that the dimension of the span 
of the monomials of the first form in (\ref{primi}) modulo $(J_X)_{-K_X+Q}$ is at most 
four.  Let $p = e e_1 e_2$ be a monomial in $k[\cG]_{-K_X+Q}$ of the third form in 
(\ref{primi}) and let $V$ denote the span of the monomials of the first form in (\ref{primi}) 
together with $p$ in $(k[\cG]/J_X)_{-K_X+Q}$.  Since $\dim (\Cox (X)_{-K_X+Q}) = 5$, 
the result follows if we show that $V = (k[\cG]/J_X)_{-K_X+Q}$.

Let $q = \bar e \bar e_1 \bar e_2$ be a monomial in $k[\cG]_{-K_X+Q}$ of the third 
form in (\ref{primi}).

If $\bar e_1 = e_1$, then let $E_3 = K_X+E+E_2=K_X+\bar E+ \bar E_2$ be the 
exceptional curve contracted by $E+E_2$.  By Lemma~\ref{basis-K+G}, applied to 
the monomials $\bar e \bar e_2 , e e_2 , k_1 e_3 , k_2 e_3$, we conclude that the  
image of $q$ in $(k[\cG]/J_X)_{-K_X+Q}$ belongs to $V$.

If $\{ \bar E_1 , \bar E_2 \} \cap \{ E_1 , E_2 \} = \emptyset $, then at least one curve in 
$\{ \bar E_1 , \bar E_2 \}$ is disjoint from one curve in $\{ E_1 , E_2 \}$ and relabeling the 
indices if necessary we may assume that $\bar E_1 \cdot E_1 = 0$.  By the same reasoning 
above $q = \bar e \bar e_1 \bar e_2$ is in the span of $V$ and $\tilde q = \tilde e \bar e_1 e_1$ 
(note that $\tilde E := -K_X+Q-\bar E_1 - E_1$ is an exceptional curve, since it has anticanonical 
degree one and square negative one), and we conclude since $\tilde q$ is in the span of $V$.

Finally, if $q = \bar e \bar f \bar e_1$, then at least one curve in $E_1 , E_2$ is disjoint from 
$\bar E_1$ and relabeling the indices if necessary we may assume that $\bar E_1 \cdot E_1 = 0$.  
Reasoning as above, $q$ is in the span of $V$ and $\tilde e e_1 \bar e_1$, and we are done.
\end{proof}

\begin{cor} \label{ganzo}
Let $X$ be a {\free} del Pezzo surface of degree one and let $D$ be an ample divisor on $X$ 
such that $-K_X \cdot D \geq 3$.  Then the ideal $I_X$ has no minimal generators in degree $D$.
\end{cor}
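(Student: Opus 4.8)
The plan is to prove Corollary~\ref{ganzo} by splitting on the anticanonical degree $-K_X \cdot D$ and reducing, in each regime, to results already established. The two regimes are $-K_X \cdot D \geq 4$ and $-K_X \cdot D = 3$, which together exhaust the hypothesis $-K_X \cdot D \geq 3$. In both regimes the conclusion will be phrased as the vanishing of the relevant contribution to $I_X$ in degree $D$.

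First I would dispatch the case $-K_X \cdot D \geq 4$. Here $X$ is {\free} and $D$ is ample with $-K_X \cdot D \geq 4$, so Theorem~\ref{non4} applies verbatim and shows that $D$ is capturable. Theorem~\ref{thm:games} then yields $b_{1,D}(\Cox(X)) = 0$, which is exactly the statement that $I_X$ has no minimal generators in degree $D$. This case therefore reduces to citing the two main theorems of the preceding two sections.

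Next I would treat the remaining case $-K_X \cdot D = 3$ by first classifying such ample $D$. Since $-K_X$ is the minimal ample divisor, Proposition~\ref{clane} shows that $D$ is ample if and only if $D \cdot E > 0$ for every exceptional curve $E$, equivalently $(D + K_X) \cdot E \geq 0$ for every such $E$; hence $N := D + K_X$ is nef, with $-K_X \cdot N = -K_X \cdot D - (-K_X)^2 = 3 - 1 = 2$. Enumerating the nef divisors of anticanonical degree two by means of Lemma~\ref{mad} gives exactly three possibilities: $N = -2K_X$, $N$ a conic $Q$, or $N = -K_X + E$ (an anticanonical divisor of degree two) for some exceptional curve $E$. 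Correspondingly $D$ equals $-3K_X$, $-K_X + Q$, or $-2K_X + E$, which are precisely the three shapes isolated at the start of this section. Applying Lemmas~\ref{tretre}, \ref{kaco} and \ref{duee} respectively finishes each subcase, and the corollary follows.

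I do not expect a genuine obstacle here: the corollary is an assembly of the preceding lemmas and theorems, and the {\free} hypothesis enters only through Theorem~\ref{non4} in the first case, since the three anticanonical-degree-three lemmas require no such hypothesis. The single point needing a short argument rather than a citation is the completeness of the list of anticanonical-degree-two nef divisors, and even that is forced by the nefness of $D + K_X$ together with Lemma~\ref{mad}; one only checks that no further combination of conics, twisted cubics, and anticanonical divisors of degree at most three has anticanonical degree exactly two.
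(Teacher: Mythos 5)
Your proposal is correct and follows essentially the same route as the paper: the case $-K_X\cdot D\geq 4$ is dispatched by Theorem~\ref{non4} together with Theorem~\ref{thm:games}, and the case $-K_X\cdot D=3$ by the classification of such ample divisors as $-3K_X$, $-2K_X+E$, $-K_X+Q$ followed by Lemmas~\ref{tretre}, \ref{duee} and~\ref{kaco}. Your extra justification of the classification (via the nefness of $D+K_X$ and Lemma~\ref{mad}) is a sound elaboration of what the paper simply asserts.
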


\begin{proof}
By Theorem~\ref{non4}, if $-K_X \cdot D \geq 4$ then $D$ is capturable and the result follows from Theorem~\ref{thm:games}.  Since the only ample divisors of anticanonical three are $-3K_X$, $-2K_X+E$ and $-K_X+Q$, where $E$ is an exceptional curve and $Q$ is a conic, the result is true if 
$-K_X \cdot D = 3$, by Lemmas~\ref{tretre}, \ref{duee} and~\ref{kaco}.
\end{proof}

\section{Non ample divisors}
\label{S:non ample divisors}

In this section we prove that the relations in degree $D$, where $D$ is a non ample 
divisor on $X$ of anticanonical degree at least three, coming from $J_X$ are 
sufficient to show that the quotient $(k[\cG]/J_X)_D$ is in fact spanned by monomials 
coming from a del Pezzo surface of smaller degree.  This is the basis for the inductive 
procedure of \S\ref{genedue}.  Throughout this section we assume that del Pezzo surfaces $X$ of degree one defined over fields of characteristic two are {\free}.

\begin{lemma} \label{tirati}
Let $X$ be a del Pezzo surface of degree at most five and let $D$ be a divisor on $X$.  
Suppose that $E$ is an exceptional curve on $X$ such that $D \cdot E = 0$.  Then 
$(k[\cG]/J_X )_D$ is spanned by products of variables corresponding to exceptional 
curves disjoint from $E$.
\end{lemma}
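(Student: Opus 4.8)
The plan is to prove that every monomial of Picard degree $D$ in $k[\cG]$ is congruent modulo $J_X$ to a combination of monomials whose variables all correspond to exceptional curves disjoint from $E$; since $(k[\cG])_D$ is spanned by monomials, this yields the statement. The basic bookkeeping comes from the identity $\sum_i (G_i \cdot E) = D \cdot E = 0$ for a monomial $m = \prod_i g_i$ of degree $D$: the variable $e$ contributes $E^2 = -1$, while every other generator $g$ satisfies $G \cdot E \geq 0$ (with $K_j \cdot E = 1$ in the degree one case). Writing $\mu(m)$ for the multiplicity of $e$ in $m$, this gives $\sum_{g_i \neq e} (G_i \cdot E) = \mu(m)$. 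Hence if $\mu(m) = 0$ then all factors are disjoint from $E$ and $m$ is already of the desired form, while if $\mu(m) \geq 1$ then $m$ must contain some factor $h$ with $H \cdot E \geq 1$. I would induct on $\mu(m)$.

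For the inductive step, write $m = e\,h\,m'$ with $H \cdot E \geq 1$ and consider the section $eh \in \HX{0}{E+H}$. Since $-K_X \cdot (E+H) = 2$, any linear relation among sections of $\cO_X(E+H)$ is a quadratic relation, hence lies in $(I_X)_2 = (J_X)_2$. The goal is to rewrite $eh$ modulo $J_X$ as a combination of products $st$ coming from decompositions $E+H = S+T$ with $S,T \neq E$; substituting into $m$ then replaces it by monomials each with one fewer factor $e$, so $\mu$ strictly drops (it is irrelevant that $S$ or $T$ may still meet $E$). Thus everything reduces to the spanning claim that the products $st$ with $\{S,T\} \neq \{E,H\}$ span $\HX{0}{E+H}$, so that the single section $eh$ lies in their span.

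I would prove this spanning claim by cases according to $(E+H)^2$; a direct check (for instance via the complement $H' = -2K_X - H$ when $\deg X = 1$) shows $H \cdot E \leq 3$, with equality forcing $\deg X = 1$ and $H = E'$. If $(E+H)^2 = 0$, i.e.\ $H$ is exceptional with $H \cdot E = 1$, then $E+H$ is a conic; a conic bundle on a del Pezzo surface of degree $d \leq 5$ has $8 - d \geq 3$ reducible fibers, so at least two distinct reducible fibers other than $E+H$ are available, and any two distinct fibers span the two-dimensional $\HX{0}{E+H}$. If $(E+H)^2 = 2$ (so $H$ is exceptional with $H \cdot E = 2$, or $H = K_j$), then $E+H$ is nef and big and $E+H+K_X$ is either $0$, in which case $E+H = -K_X$ and we take $Z = X$, or an exceptional curve $G_0$ whose contraction $\phi \colon X \to Z$ exhibits $E+H = \phi^*(-K_Z)$ for a degree two del Pezzo surface $Z$. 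By Lemma~\ref{gene2} the reducible anticanonical divisors of $Z$ span ${\rm H}^0\big(Z, \cO_Z(-K_Z)\big) = \HX{0}{E+H}$, and because $X$ has no $(-2)$-curves the contracted point lies on no exceptional curve of $Z$, so these decompositions pull back to honest decompositions $S+T = E+H$ into exceptional curves of $X$; discarding the single decomposition $\{E,H\}$ leaves at least five reducible divisors, which still span by Lemma~\ref{gene2}, and all of whose components avoid $E$. Finally, if $(E+H)^2 = 4$ then $H = E'$ and $E+H = -2K_X$: here the hypothesis that $X$ is {\free} guarantees that any $119$ of the $120$ tritangent decompositions of $-2K_X$ span $\HX{0}{-2K_X}$, so discarding $\{E,E'\}$ again leaves a spanning set avoiding $E$.

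The step I expect to be the main obstacle is the case $(E+H)^2 = 2$ with $\deg X = 1$ and $H \cdot E = 2$: unlike the conic case it is not governed by a conic bundle, so one must recognize $E+H$ as an anticanonical pullback from a degree two del Pezzo surface and transport Lemma~\ref{gene2} back to $X$. The delicate point is that the reducible anticanonical sections of $Z$ pull back to products of exactly two variables on $X$, rather than to higher-degree monomials; this is precisely what the absence of $(-2)$-curves on the del Pezzo surface $X$ guarantees, since an exceptional curve of $Z$ through the contracted point would acquire self-intersection $-2$ upon blowing up. The case $H = E'$ is where the sweeping hypothesis is genuinely used, which matches the restriction to sweeping surfaces in characteristic two.
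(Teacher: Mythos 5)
Your proposal is correct and follows essentially the same route as the paper's proof: induct on the multiplicity of $e$ in a monomial, extract a factor $h$ with $H\cdot E>0$, and use the degree-two relations in the nef class $E+H$ to replace $eh$ by products of variables different from $e$, the spanning input being exactly the pencil of a conic, Lemma~\ref{gene2} for the degree-two anticanonical case, and Proposition~\ref{tritangenti} (or the sweeping hypothesis) for $-2K_X$. The extra details you supply (the intersection bookkeeping forcing such an $h$ to exist, and the absence of $(-2)$-curves ensuring the decompositions of $\phi^*(-K_Z)$ are genuine products of two variables on $X$) are correct elaborations of steps the paper leaves implicit.
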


\begin{proof}
Let $m \in k[\cG]_D$ be a monomial and write $m = e^p \cdot s$, where $p \geq 0$ 
and $s$ is a product of variables different from $E$.  Note that if $p = 0$, then $s$ is a 
product of variables corresponding to divisors disjoint from $E$, since by assumption 
$D \cdot E = 0$.  We shall show that if $p \geq 1$, then using the quadratic relations 
we may decrease $p$; the result then follows by induction on $p$.

Suppose that $p\geq 1$.  Since $D \cdot E = 0$, there is a variable $c \in \cG$ such that 
$E \cdot C > 0$ and $c$ divides $s$.
The monomial $ce$ is a monomial of anticanonical degree two corresponding to a nef 
divisor.  By definition of $J_X$, the vector space $(k[\cG]/J_X)_{C+E}$ coincides with 
$\Cox (X)_{C+E}$; thus it suffices to show that there is a basis of $\Cox (X)_{C+E}$ 
consisting of sections vanishing along exceptional curves different from $E$.  To conclude, we analyze all the possibilities for the divisor $C+E$.

\noindent
{\bf Case 1:} The divisor $C+E$ is a conic.  The linear system $|C+E|$ contains $8-\deg (X) \geq 3$ distinct reducible elements and any two of these span it.

\noindent
{\bf Case 2:} The divisor $C+E$ is $-b^*K_Y$, where $b \colon X \to Y$ is a birational morphism, $Y$ has degree two.  The linear system $|C+E|$ contains $28$ distinct reducible elements and any five of these span it (Lemma~\ref{gene2}).

\noindent
{\bf Case 3:} The divisor $C+E$ is ${-2K_X}$ and the degree of $X$ is one.  The linear system $|{-2K_X}|$ contains $120$ distinct reducible elements whose irreducible components are $(-1)$-curves and any $113$ of these span it (Proposition~\ref{tritangenti}).

In all cases the sections supported on $(-1)$-curves distinct from $E$ span $\Cox (X)_{C+E}$ and the lemma follows.
\end{proof}

\begin{lemma} 
\label{negati}
Let $X$ be a del Pezzo surface of degree at most five and let $D$ be a divisor on $X$.  
Suppose that $E$ is an exceptional curve on $X$ such that $D \cdot E < 0$.  Then 
$\dim (k[\cG]/J_X )_D = \dim (k[\cG]/J_X )_{D-E}$ and 
$\dim \Cox (X)_D = \dim \Cox (X)_{D-E}$.
\end{lemma}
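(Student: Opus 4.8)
My plan is to treat the two equalities separately, starting with the cohomological one about $\Cox(X)$, which I expect to be quick. Since $D \cdot E < 0$ and $E$ is a $(-1)$-curve, $E$ is forced into the base locus of $|D|$. Concretely, I would restrict the sequence $0 \to \cO_X(D-E) \to \cO_X(D) \to \cO_E(D) \to 0$ to $E \cong \bP^1$, where $\cO_E(D) \cong \cO_{\bP^1}(D\cdot E)$ has no global sections because $D \cdot E < 0$. The long exact sequence then yields an isomorphism $\HX{0}{D-E} \cong \HX{0}{D}$, the map being multiplication by the section $e$; this is precisely the equality $\dim \Cox(X)_{D-E} = \dim \Cox(X)_D$.

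For the statement about $k[\cG]/J_X$, the plan is to show that multiplication by $e$ induces an isomorphism of vector spaces $(k[\cG]/J_X)_{D-E} \to (k[\cG]/J_X)_D$. The key elementary observation I would isolate first is that for any $\Pic$-degree $D'$ with $D' \cdot E < 0$, every monomial of $k[\cG]_{D'}$ is divisible by $e$: writing a monomial as a product $\prod G_i$ of generators with $\sum G_i = D'$, if no factor equalled $E$ then each $G_i$ — being either $K_1,K_2$ (with $G_i \cdot E = -K_X \cdot E = 1$) or an exceptional curve distinct from $E$ (with $G_i \cdot E \geq 0$) — would give $G_i \cdot E \geq 0$, forcing $D' \cdot E \geq 0$, a contradiction. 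Applying this with $D' = D$ shows that $\cdot e \colon k[\cG]_{D-E} \to k[\cG]_D$ is a bijection of monomial bases, hence a linear isomorphism.

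It then remains to check that this isomorphism carries $(J_X)_{D-E}$ onto $(J_X)_D$, i.e.\ that $(J_X)_D = e \cdot (J_X)_{D-E}$; the inclusion $\supseteq$ is immediate since $J_X$ is an ideal, and the reverse is where the geometry enters. I would write $(J_X)_D$ as spanned by products $q f$, where $q$ is a quadratic generator of $J_X$ of $\Pic$-degree $Q$ and $f$ is a monomial of degree $D-Q$. Every such $Q$ is a nef divisor of anticanonical degree two (all quadratic relations arise from degree-two nef divisors; see~\S\ref{ulti}), so $Q \cdot E \geq 0$ by Proposition~\ref{clane} and therefore $(D-Q)\cdot E = D\cdot E - Q\cdot E < 0$. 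By the observation above the cofactor $f$ is divisible by $e$, say $f = ef'$, whence $q f = e\,(q f')$ with $q f' \in (J_X)_{D-E}$; summing over generators gives $(J_X)_D = e\cdot (J_X)_{D-E}$. Since the bijection $\cdot e$ carries the subspace $(J_X)_{D-E}$ onto $(J_X)_D$, it descends to an isomorphism of the quotients, delivering $\dim (k[\cG]/J_X)_D = \dim (k[\cG]/J_X)_{D-E}$.

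The main obstacle — and the only place needing more than bookkeeping — is the control of the cofactors $f$: the argument collapses unless the $\Pic$-degrees $Q$ supporting quadratic relations satisfy $Q \cdot E \geq 0$. This is guaranteed by the nefness of those degrees, so the crux is to invoke (or, if one prefers, re-derive) that every quadratic generator of $I_X$ sits in a nef degree of anticanonical degree two. Once that input is in hand, both equalities follow formally from the divisibility-by-$e$ mechanism.
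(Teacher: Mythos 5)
Your proof is correct and follows the same mechanism as the paper's: since $D\cdot E<0$ forces $E$ (resp.\ the variable $e$) into every section of $\cO_X(D)$ (resp.\ every monomial of $k[\cG]_D$), multiplication by $e$ identifies the degree $D-E$ and degree $D$ pieces. You additionally spell out a step the paper leaves implicit, namely that $(J_X)_D = e\,(J_X)_{D-E}$ because the $\Pic$-degrees of the quadratic generators are nef and hence meet $E$ non-negatively; this is a genuine (and correctly handled) refinement of the one-line argument in the paper, not a different approach.
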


\begin{proof}
If $D$ is not effective, then $D-E$ is not effective and $k[\cG]_D = k[\cG]_{D-E} = 0$.  If 
$D$ is effective, then $E \cdot D < 0$ implies that $E$ is a component of $D$ and thus 
every monomial in $k[\cG]_D$ is divisible by $e$; the same argument also proves the 
statement about the Cox ring.
\end{proof}

\section{Del Pezzo surfaces of degree at least two} \label{Xgrado2}

\begin{lemma} 
\label{parti2}
Let $X$ be a del Pezzo surface of degree two.  Any ample divisor is capturable 
except $-K_X$.
\end{lemma}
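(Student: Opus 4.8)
The plan is to reduce to the case $-K_X \cdot D \geq 4$ and then, for each such $D$, to produce a capture sequence that shrinks the support of every cycle down to two curves, at which point Lemma~\ref{lem:easy} and Theorem~\ref{thm:games} finish the argument. First I would pin down the exception. Since the minimal ample divisor of $X$ is $A_X = -K_X$, the remark following Corollary~\ref{maquale} shows that $M := D + K_X$ is nef for every ample $D$, with $M = 0$ exactly when $D = -K_X$. By Lemma~\ref{mad} the nef cone is generated by conics, twisted cubics, and pulled-back anticanonical classes of del Pezzo surfaces of larger degree; on a degree two surface each such generator has anticanonical degree two or three, so every nonzero nef class has anticanonical degree at least two. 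Hence $D \neq -K_X$ forces $-K_X \cdot D = (-K_X \cdot M) + 2 \geq 4$; in particular there is no ample divisor of anticanonical degree three to contend with, which is where the analogous degree one problem became delicate.

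Now fix ample $D$ with $-K_X \cdot D \geq 4$ and write $D = -nK_X + N$ with $n \geq 1$ and $N$ nef (Corollary~\ref{maquale}). To capture a curve $C$ I would invoke Lemma~\ref{captureC} with disjoint exceptional curves $S, T$ satisfying $C \cdot S, C \cdot T \geq 2$. Writing $E'$ for the exceptional curve with $E + E' = -K_X$ (the Geiser conjugate), the divisor to control is $L_{ST} = -K_X + D - C - S - T$, which equals $(-2K_X - C - S - T) - (n-1)K_X + N$. The geometric heart is that no exceptional curve $V$ can be disjoint from both $C'$ and $S'$: contracting such a $V$ would realize $C'$ and $S'$ as two lines on a cubic surface meeting in $C' \cdot S' = C \cdot S \geq 2$ points, which is impossible. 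This forces $L_{ST}$ to meet every exceptional curve nonnegatively away from $C, S, T$, while the summand $-(n-1)K_X + N$ repairs the deficit at $C, S, T$ themselves. For a suitable such choice $-2K_X - C - S - T$ is effective with positive self-intersection, so $L_{ST}$ is nef and big and $(\{S,T\}, C)$ is a capture move. A short bookkeeping check shows the only binding inequality is $n + N \cdot C \geq 2$, arising from intersecting $L_{ST}$ with $C$; an especially clean instance is $S = C'$, for which $L_{C'T}$ collapses to $D - T$.

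Consequently, when $n \geq 2$ the inequality $n + N \cdot C \geq 2$ holds for every exceptional curve $C$, and since each curve admits disjoint pairs $S, T$ with $C \cdot S, C \cdot T \geq 2$ (readily verified from the intersection pattern of the $56$ exceptional curves, e.g.\ via $S = C'$), I can capture the curves one at a time, reducing $\cC$ to a two-element set, which is a stopping criterion by Lemma~\ref{lem:easy}; capturability together with Theorem~\ref{thm:games} then gives $b_{1,D}(\Cox(X)) = 0$. The organization of these moves parallels Theorem~\ref{non4}: I would case on the decomposition $N = mA + N'$, using the mutually disjoint curves contracted by a birational morphism when $A$ is a twisted cubic or a pulled-back anticanonical class, and the components of the reducible fibres of a conic bundle (produced by Lemma~\ref{dose}) when $A$ is a conic.

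The hard part will be the case $n = 1$, that is $D = -K_X + N$, together with the exceptional curves $C$ contracted by $N$ (those with $N \cdot C = 0$): for these the binding inequality $n + N \cdot C \geq 2$ fails, $L_{ST} \cdot C$ turns negative for the naive choice of $S, T$, and nefness is genuinely lost. Resolving this requires exploiting the specific geometry of $N$ — its conic-bundle structure or its contracted locus — to route the capture of such $C$ through curves adapted to that structure, exactly as Lemma~\ref{cocco} does in the degree one setting. Controlling this, and ensuring at every stage that the curves $S, T$ used have not themselves already been captured, is where the argument needs the most care; the remaining verifications are routine intersection theory on the exceptional curves of $X$.
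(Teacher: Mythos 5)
There is a genuine gap, and it sits at the very centre of your mechanism. Your capture criterion asks for \emph{two disjoint} exceptional curves $S,T$ with $C\cdot S,\,C\cdot T\geq 2$. On a del Pezzo surface of degree two this is impossible: if $C$ and $S$ are exceptional curves with $C\cdot S=2$, then $(C+S)^2=2=K_X^2$ and $-K_X\cdot(C+S)=2$, so Hodge index forces $C+S=-K_X$, i.e.\ $S=C'$ is the Geiser conjugate. Thus for each $C$ there is exactly \emph{one} curve meeting it with multiplicity $\geq 2$, and you cannot choose two distinct (let alone disjoint) such curves; your parenthetical ``readily verified from the intersection pattern of the $56$ exceptional curves'' is false. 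The criterion you are importing is Lemma~\ref{lem:capture}, which lives on degree one surfaces where intersection numbers up to $3$ occur; it does not transplant to degree two. Relatedly, your claim that ``for a suitable such choice $-2K_X-C-S-T$ is effective with positive self-intersection'' fails in your own ``clean instance'' $S=C'$, where $-2K_X-C-C'-T=-K_X-T=T'$ is an exceptional curve of square $-1$.

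Your opening reduction is correct and worth keeping: on a degree two surface every nonzero nef divisor has anticanonical degree at least two, so every ample $D\neq -K_X$ has $-K_X\cdot D\geq 4$ and no degree-three analysis is needed. But the paper's proof then proceeds quite differently from what you sketch: it takes $\cS$ to be the set of exceptional curves contracted by $N$ (padding with a disjoint curve if needed), captures $C$ with $S=C'$ when $C'\in\cS$ via $L_{C'T}=D-T$, and otherwise chooses disjoint $S,T\in\cS$ and verifies nefness and bigness of $L_{ST}$ by decomposing it into conics, twisted cubics, pulled-back anticanonical classes and nef summands --- the pairs $S,T$ used typically meet $C$ in at most one point. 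Moreover, the case you defer as ``the hard part'' ($n=1$ together with the curves contracted by $N$) is not a loose end one can wave at: it occupies most of the paper's argument, including the separate treatments of $N=L+N'$ and $N=Q+N'$ and the delicate choice of $T$ when $C'\cdot(N-T)=0$. As written, your proposal neither supplies a working capture move for a single curve nor resolves the case you yourself identify as binding.
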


\begin{proof}
Let $D \neq -K_X$ be an ample divisor and write $D = -nK_X +N$, with $n \geq 1$ and 
$N$ nef and not ample; by assumption either $n \geq 2$ or $N \neq 0$.

Suppose first that $N \neq 0$; if $N$ contracts more than one exceptional curve, then we let $\cS$ 
be the set of exceptional curves contracted by $N$. If $N$ contracts exactly one exceptional 
curve $E$, then we let $F$ be any exceptional curve disjoint from $E$ and let $\cS := \{E,F\}$.

Let $C \in \cC \setminus \cS$ and let $C' := -K_X-C$.  If $C' \in \cS$, then the divisor 
$L_{C'T} = N+(-K_X-T)-(n-1)K_X$ is big and nef for all $T \in \cS$, $T \neq C'$ since 
$(-K_X-T) \cdot N = -K_X \cdot N > 0$; thus we can capture all such curves using $\cS$.  
If $C' \notin \cS$, then let $S,T \in \cS$ be disjoint and note that $N-T$ is either nef or 
$N$ is a multiple of a conic and it is the sum of a multiple of a conic and a single exceptional 
curve.  Moreover 
\begin{itemize}
\item $S' \cdot (N-T) = -K_X \cdot N-1>0$, 
\item if $N$ is a multiple of a conic, then $S'$ also intersects the fixed component of $N-T$, 
\item $C' \cdot (N-T) = -K_X \cdot N - 1 - C \cdot T$ is zero if and only if $N$ is a conic and 
$C \cdot T = 1$.
\end{itemize}
In this last case, note that $C \cdot N \neq 0$, and $C$ intersects at least one component in 
every reducible fiber of the conic bundle determined by $N$; hence we may choose $T$ to 
be the other component if $C' \cdot (N-T) = 0$.  In all cases we may choose $S,T \in \cS$ 
disjoint such that $L_{ST}$ is big and nef, and hence capture all $C \in \cC \setminus \cS$ 
using only curves in $\cS$.  In particular, we are done if $N$ contracts at most two exceptional 
curves.

To conclude it suffices to treat the two cases $N=L+N'$ and $N = Q+N'$, where $L$ is a 
twisted cubic, $Q$ is a conic and $N'$ is nef; note also that $S \cdot N' = 0$, for all $S \in \cS$.

Choose any two disjoint $S,T \in \cS$ and let $C \in \cS$, $C \neq S,T$.  
If $N=L+N'$, we have that the divisor $L_{ST} = -K_X+(-K_X-C)+(L-S-T)+N'$ is big and 
nef since $(-K_X-C) \cdot (L-S-T) = 1$ and we conclude.  If $N=Q+N'$, then the 
divisor $L_{ST} = (-K_X-S)+(-K_X-T)+(Q-C)+N'$ is big and nef since 
$(-K_X-S) \cdot (Q-C) = (-K_X-T) \cdot (Q-C) = 1$ and again we conclude.

Suppose now that $N=0$ and hence $D = -nK_X$, where $n \geq 2$.  Let $\cS$ be any 
set of seven disjoint exceptional curves.  For all $C \in \cC \setminus \cS$, there are 
distinct $S,T \in \cS$ such that $C \cdot S , C \cdot T > 0$: this is clear from the list of 
exceptional curves.  Thus for such a choice of $S,T$, we have that the divisor 
$L_{ST} = -3K_X-C-S-T = C'+S'+T'$ is big and nef and we may capture all the curves 
in $\cC \setminus \cS$ using $\cS$.  Finally let $S,T \in \cS$ be distinct elements and let 
$S' := -K_X-S$, $T' := -K_X-T$; for all $C \in \cS$ the divisor 
$L_{S'T'} = -3K_X-C-S'-T' = C'+S+T$ is big and nef and we conclude.
\end{proof}

\begin{lemma} 
\label{parti3}
Let $X$ be a del Pezzo surface of degree three.  Any ample divisor is capturable.
\end{lemma}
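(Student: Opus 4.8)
The plan is to follow the template of the degree-two argument in Lemma~\ref{parti2}, replacing its intersection-theoretic estimates by the ones valid on a cubic surface. Recall that on a del Pezzo surface of degree three the minimal ample divisor is $-K_X$; hence for an ample $D$ the class $D+K_X$ is nef, and writing $D = -nK_X + N$ with $n \geq 1$ maximal so that $N := D+nK_X$ is nef, the residual $N$ is nef but not ample. As in Lemma~\ref{parti2} I would produce, for each exceptional curve $C$ to be captured, a pair of disjoint exceptional curves $S, T$ for which $L_{ST} := -K_X + D - S - T - C$ is nef and either big or a conic, so that $(\{S,T\}, C)$ is a capture move by Lemma~\ref{captureC}; after capturing all but two curves I would invoke Lemma~\ref{lem:easy} as a stopping criterion and conclude with Theorem~\ref{thm:games}.

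For the case $N \neq 0$ the strategy is exactly that of Lemma~\ref{parti2}. Using Lemma~\ref{mad} I would write $N = mA + N'$, where $A$ is the pull-back of the minimal ample divisor of the target of the morphism with connected fibres induced by $N$, $m \geq 1$, and $N'$ is nef; on a cubic surface $A$ is a twisted cubic $L$ (when the target is $\bP^2$), a conic $Q$ (when the target is $\bP^1$), or an anticanonical class pulled back from a contracted surface of degree two. I would take $\cS$ to be the set of exceptional curves contracted by $N$ (augmenting it to a disjoint pair when $N$ contracts only one curve), capture the curves of $\cC \setminus \cS$ using $\cS$ and then the curves of $\cS$ using two of its members, in each case reading nefness and bigness of $L_{ST}$ off the intersection numbers; when a conic bundle is involved I would apply Lemma~\ref{dose} to locate disjoint components $S, T$ of its reducible fibres meeting $C' := -K_X - C$ with the prescribed multiplicities, precisely as in the degree-two proof.

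The genuinely new point, and the step I expect to be the main obstacle, is the case $N = 0$, that is $D = -nK_X$, which now \emph{includes} $D = -K_X$ --- a divisor that was excluded in degree two. For $n \geq 2$ the divisor $L_{ST} = -(n+1)K_X - S - T - C$ decomposes as a sum of conics of the form $-K_X - \ell$ together with a nonnegative multiple of the ample class $-K_X$, so taking $\cS$ to be six disjoint exceptional curves and choosing, for each $C$, disjoint $S, T \in \cS$ meeting $C$, I can verify nefness and bigness directly. For $n = 1$ the divisor $L_{ST} = -2K_X - S - T - C$ has anticanonical degree exactly three and self-intersection $-3 + 2\bigl(S \cdot C + T \cdot C\bigr)$, so bigness forces me to choose both $S$ and $T$ among the ten lines meeting $C$ and disjoint from one another, while nefness fails precisely when some further line meets each of $S$, $T$ and $C$. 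The heart of the argument is therefore a combinatorial verification, using the configuration of the twenty-seven lines recorded by the blow-up description of $\Pic(X)$ in Section~\ref{S:notation}, that for every exceptional curve $C$ such a pair $S, T$ exists, and that these capture moves can be organized so as to reduce every cycle to a support of size at most two.
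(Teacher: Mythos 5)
Your overall strategy---capture each line $C$ with a disjoint pair $S,T$ for which $L_{ST}$ is nef and big, then stop at two curves---is the right one, but the proposal stops short of a proof at exactly the point you yourself flag as ``the heart of the argument.'' Two things are missing. First, the endgame: after capturing every line outside your chosen set $\cS$ you are left with a support of six curves, and you never say how to capture the members of $\cS$ themselves down to a two-element stopping criterion; this is where the paper's proof uses the Schl\"afli double-six, capturing two lines $S',T'$ of the complementary sixer with $\cS$, then $\cS\setminus\{S,T\}$ with $\cS'\setminus\{S',T'\}$, and finally $\cS'\setminus\{S',T'\}$ with $\{S,T\}$. Second, the case $N\neq 0$ is asserted to run ``exactly'' as in Lemma~\ref{parti2}, but that proof leans on degree-two facts---most visibly that $-K_X-C$ is an exceptional curve, whereas on a cubic surface it is a conic---so all the intersection estimates would have to be redone, and none are.

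The case split you build, and the identification of $D=-K_X$ as the main obstacle, are also unnecessary: since $-K_X$ is the minimal ample divisor in degree three, every ample $D$ equals $-K_X+N$ with $N$ nef, and the single decomposition $L_{ST}=(-K_X-T)+(-K_X-C-S)+N$, valid whenever $S\cdot T=0$ and $S\cdot C=T\cdot C=1$, is nef and big uniformly in $N$, including $N=0$. In particular your worry that ``nefness fails precisely when some further line meets each of $S$, $T$ and $C$'' is vacuous under these hypotheses: the unique line meeting both $C$ and $S$ is $-K_X-C-S$, and it is disjoint from $T$, so no such line exists. With $\cS=\{E_1,\dots,E_6\}$ the existence of the pair $(S,T)$ for each $C\notin\cS$ is immediate from the list of the twenty-seven lines. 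So the approach is salvageable, but as written the combinatorial verification and, above all, the organization of the moves down to a stopping criterion---the actual content of the lemma---are left undone.
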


\begin{proof}
Let $D$ be an ample divisor and write $D = -K_X +N$, with $N$ a nef divisor.

Let $\cS$ be a set of six disjoint exceptional curves.  For all $C \in \cC \setminus \cS$ 
there are two distinct $S,T \in \cS$ such that $C \cdot S = C \cdot T = 1$.  With these 
choices of $S,T$ we have that the divisor $L_{ST} = -2K_X+N-C-S-T = (-K_X-T)+(-K_X-C-S)+N$ 
is big and nef since $(-K_X-T) \cdot (-K_X-C-S) = 1$.  Thus we may capture all the 
exceptional curves $C \in \cC \setminus \cS$ using the curves in $\cS$.  Let $\cS'$ be 
the set of six disjoint exceptional curves each intersecting all the curves in $\cS$ 
except for one (thus $\cS \cup \cS'$ is a ``Schl\"afli double-six'').  Choose 
$S,T \in \cS$ and let $S',T' \in \cS'$ be two corresponding curves, that is 
$S \cdot S' = T \cdot T' = 0$.  Using the moves above we can capture $\{ S',T' \}$ with 
$\cS$ and we can capture $\cS \setminus \{ S,T \}$ using $\cS' \setminus \{ S',T' \}$, and 
finally we capture $\cS' \setminus \{ S',T' \}$ using $\{ S,T \}$.
\end{proof}

\begin{lemma} \label{parti4}
Let $X$ be a del Pezzo surface of degree four.  Any ample divisor is capturable.
\end{lemma}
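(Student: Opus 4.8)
The plan is to follow the template of Lemmas~\ref{parti2} and~\ref{parti3}: write every ample class as $D = -K_X + N$ with $N$ nef, exhibit a sequence of capture moves via Lemma~\ref{captureC}, and finish with a two-element stopping criterion, invoking Theorem~\ref{thm:games}. First I would record the geometry. Here $X$ is the blow-up of $\bP^2$ at five general points, $-K_X = 3L - E_1 - \cdots - E_5$, and $\cG = \cC$ is the set of $16$ exceptional curves: the $E_m$, the ten classes $L - E_i - E_j$, and $C_0 := 2L - E_1 - \cdots - E_5$. A direct check shows any two distinct exceptional curves meet in $0$ or $1$ points. Since $A_X = -K_X$ is the minimal ample divisor, for ample $D$ the class $N := D + K_X$ is nef, so $D = -K_X + N$ (this includes $D=-K_X$, so there is no excluded case as in degree two).

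The engine is a single capture criterion. Given exceptional curves $C,S,T$ with $S \cdot T = 0$, set $M := -2K_X - C - S - T$, so that $-K_X + D - C - S - T = M + N$. Expanding gives $M^2 = 1 + 2(C\cdot S + C\cdot T) \geq 1$, while $M \cdot F = 2 - (C\cdot F + S\cdot F + T\cdot F)$ on every exceptional curve $F$; as the intersections are at most $1$, the divisor $M$ is nef \emph{precisely when no exceptional curve meets all three of $C,S,T$}. In that situation $M$ is nef with positive self-intersection, hence big, so $M+N$ is nef and big and $(\{S,T\},C)$ is a capture move for $D$ by Lemma~\ref{captureC}. The problem thereby becomes purely combinatorial: for each curve to be removed, produce a disjoint pair admitting no common transversal among the remaining curves.

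I would then capture $14$ of the $16$ curves, leaving the stopping set $\{C_0,\, L-E_4-E_5\}$, in the order $E_1,\dots,E_5$, then the ``mixed'' classes $L-E_i-E_4$ and $L-E_i-E_5$, then the ``inner'' classes $L-E_i-E_j$ with $i,j\le 3$. The key structural facts are that $C_0$ meets only the $E_m$ and that $L-E_4-E_5$ meets only classes whose index set avoids $\{4,5\}$; retaining both in the stopping set makes them permanent tools. Concretely, each $E_m$ is captured by $\{L-E_m-E_a,\,L-E_m-E_b\}$ (a common transversal would have to meet $E_m$ yet meet both of these, which is impossible); each mixed class $L-E_i-E_4$ (resp.\ $L-E_i-E_5$) is captured by $\{C_0,\,L-E_a-E_b\}$ with $\{a,b\}=\{1,2,3\}\setminus\{i\}$, where no $E_m$ is a common transversal because $\{a,b\}$ is disjoint from $\{i,4\}$; and each inner class is captured by $\{C_0,\,L-E_4-E_5\}$ outright, since no $E_m$ meets both. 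One checks that every tool is still present when used: the mixed curves serve to capture the $E_m$ before they are themselves removed, and $C_0,\,L-E_4-E_5$ are never removed.

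The main obstacle is exactly this scheduling. Naive orderings deadlock: if one captures the $E_m$ last using other $E$'s, the curve $C_0$ is a common transversal to any three of them; and if one tries to end at $\{E_4,E_5\}$, the mixed classes run out of transversal-free partners once their neighbours are gone (I would expect the writeup to need an explicit verification that such deadlocks are unavoidable under the obvious choices). The one nontrivial idea that unlocks the argument is to keep $C_0$ and $L-E_4-E_5$ in the stopping set as universal tools, which is possible only because their sets of transversals are so restricted; everything else is a routine check of intersection numbers.
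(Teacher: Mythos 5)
Your proof is correct and follows essentially the same route as the paper: write $D=-K_X+N$ with $N$ nef, check that $-2K_X+N-C-S-T$ is nef and big so that Lemma~\ref{captureC} yields the capture move, and finish with a two-element stopping criterion via Theorem~\ref{thm:games}. The paper packages the nefness check more simply --- for disjoint $S,T$ with $C\cdot(S+T)>0$ the divisor $-2K_X-C-S-T$ is the sum of a conic and a twisted cubic, so no ``common transversal'' analysis is needed --- and uses a schedule ending at $\{E_1,E_2\}$ (capture $L-E_i-E_j$ with $i,j\in\{3,4,5\}$ using $E_3,E_4$; then $E_3,E_4,E_5$ using $L-E_1-E_3,L-E_1-E_4,L-E_1-E_5$; then everything else using $E_1,E_2$), but your schedule ending at $\{2L-E_1-\cdots-E_5,\,L-E_4-E_5\}$ is verified correctly and works just as well.
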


\begin{proof}
Let $D$ be an ample divisor and write $D = -K_X +N$, with $N$ a nef divisor.  
The criterion to capture curves is that $(\{E,F\} , C)$ is a capture move if $C \notin \{E,F\}$, 
$E \cdot F = 0$ and $C \cdot E + C \cdot F > 0$: if $C \cdot E > 0$, the divisor 
$L_{EF} = (-K_X-C-E)+(-K_X-F)+N$ is the sum of a conic, a twisted cubic and a nef divisor 
and hence it is big and nef.

We refer to Table~\ref{tab:-1curves1}.  Use $E_3,E_4$ to capture $L-E_i-E_j$ for all 
$3 \leq i < j < 5$.  Use $L-E_1-E_3 , L-E_1-E_4 , L-E_1-E_5$ to capture $E_3,E_4,E_5$ 
and finally use $E_1,E_2$ to capture all the remaining curves.
\end{proof}

\begin{lemma} 
\label{parti5}
Let $X$ be a del Pezzo surface of degree five.  Any ample divisor is capturable.
\end{lemma}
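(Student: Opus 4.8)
The plan is to follow the pattern of Lemmas~\ref{parti2}--\ref{parti4}. Since $\deg(X)=5$, the surface $X$ is the blow-up of $\bP^2$ at four points in general position, so its minimal ample divisor is $A_X=-K_X$; by Corollary~\ref{maquale} every ample $D$ can be written as $D=-K_X+N$ with $N=D+K_X$ nef. The ten exceptional curves are $E_1,\dots,E_4$ together with the six classes $L-E_i-E_j$ (Table~\ref{tab:-1curves1}). I would produce a capturing sequence $\cM$ all of whose moves have the form $(\{S,T\},C)$ with $S,T$ disjoint curves drawn from $\cS:=\{E_1,E_2,E_3,E_4\}$, and which terminates with the two-element stopping criterion $\{E_1,E_2\}$ (any two-element subset of $\cG$ is a stopping criterion, by Lemma~\ref{lem:easy}); the conclusion then follows from Theorem~\ref{thm:games}.

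The computational heart is Lemma~\ref{captureC}: for a disjoint pair $\{S,T\}$ and a target $C$ one forms $L_{ST}:=-K_X+D-C-S-T=-2K_X+N-C-S-T$ and checks that it is nef and big. Because $N$ is nef it suffices to treat the base part $-2K_X-C-S-T$; in every move below this class has self-intersection at least $5$, so bigness is automatic and, crucially, one never lands in the ``multiple of a conic'' or anticanonical-degree-two edge cases that complicate the degree-one arguments. Thus the only genuine check is nefness, which by Proposition~\ref{clane} amounts to nonnegative intersection with the ten exceptional curves; the $S_4$-symmetry permuting $E_1,\dots,E_4$ reduces each family of checks to a single representative calculation.

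Concretely, I would first capture each $C=L-E_a-E_b$ by taking $S=E_a$ and $T=E_c$ with $c\notin\{a,b\}$, so that the decomposition $L_{ST}=(-K_X-C-S)+(-K_X-T)+N$ displays $L_{ST}$ as the sum of a twisted cubic (square $1$), a nef and big class of square $2$, and the nef divisor $N$; one verifies directly, e.g.\ for $C=L-E_1-E_2$, that $-2K_X-C-S-T=5L-2E_1-E_2-3E_3-2E_4$ is nef with square $7$. I would then capture $E_3$ and $E_4$ using the pair $\{E_1,E_2\}$: although $E_3,E_4$ are disjoint from $E_1,E_2$, the base class $6L-3E_1-3E_2-3E_3-2E_4$ is nef of square $5$, so each is a capture move, and $\{E_1,E_2\}$ survives as the stopping criterion. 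Ordering $\cM$ so that $E_3,E_4$ are removed last keeps all capturing curves available in the corresponding sets $\cS_i$.

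The main obstacle---really the only subtlety---is choosing the capturing pairs so that $L_{ST}$ stays nef. This is not automatic: capturing $C=L-E_a-E_b$ with the ``complementary'' pair $\{E_c,E_d\}$, where $\{c,d\}=\{1,2,3,4\}\setminus\{a,b\}$, gives $L_{ST}\cdot(L-E_c-E_d)=-1$, so one must include in the pair at least one curve meeting $C$. Beyond this bookkeeping, the degree-five case is the most benign of the series, since the relevant base divisors are uniformly big.
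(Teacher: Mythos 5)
Your proposal is correct and follows essentially the same route as the paper: write $D=-K_X+N$ with $N$ nef, verify capture moves $(\{S,T\},C)$ with $S,T$ disjoint exceptional curves by checking that $L_{ST}$ is nef and big (Lemma~\ref{captureC}), and finish with the two-element stopping criterion $\{E_1,E_2\}$. The only (harmless) deviation is at the end: the paper first captures $E_3,E_4$ using $L-E_1-E_3$ and $L-E_1-E_4$, so that every move satisfies the intersection criterion $C\cdot S+C\cdot T>0$ of Lemma~\ref{parti4}, whereas you capture $E_3,E_4$ directly with $\{E_1,E_2\}$ by verifying nefness and bigness of $6L-3E_1-3E_2-3E_3-2E_4$ by hand --- a valid shortcut.
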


\begin{proof}
Same criterion and strategy as in Lemma~\ref{parti4}, ignoring any reference to the 
index 5.
\end{proof}

\begin{lemma} \label{parti6}
Let $X$ be a del Pezzo surface of degree six.  Any ample divisor is capturable.
\end{lemma}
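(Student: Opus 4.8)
The plan is to reduce everything to the extremely rigid geometry of the six exceptional curves in degree six. Since $X$ is the blow-up of $\bP^2$ at three non-collinear points, $\cG=\cC$ consists of $E_1,E_2,E_3$ together with the three lines $L-E_i-E_j$; these fit into a hexagon $C_1,\dots,C_6$ in which $C_i\cdot C_{i+1}=1$ (indices mod $6$) and $C_i\cdot C_j=0$ for all other $j\neq i$. In particular every exceptional curve meets exactly two of the others and is disjoint from the remaining three, and opposite vertices such as $C_1$ and $C_4$ are disjoint. I would also record at the outset that any ample divisor may be written $D=-K_X+N$ with $N$ nef: the minimal ample divisor in degree six is $A_X=-K_X$, and $A-A_X$ is nef for every ample $A$ by the discussion following Corollary~\ref{maquale}.

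The one substantive step is a uniform capture-move lemma: for any three distinct exceptional curves $S,T,C$ with $S\cdot T=0$, the pair $(\{S,T\},C)$ is a capture move for $D$. To prove it I would apply Lemma~\ref{captureC} to $L_{ST}=-K_X+D-C-S-T=-2K_X+N-S-T-C$, writing $M:=-2K_X-S-T-C$. Nefness of $M$ follows from Proposition~\ref{clane}: for any exceptional curve $V$ one has $M\cdot V=2-(S+T+C)\cdot V\geq 0$, since each vertex of the hexagon is adjacent to at most two of $S,T,C$, while the cases with $V\in\{S,T,C\}$ are only more favourable. A one-line computation gives $M^2=12+(S+T+C)^2\geq 12-3=9>0$, so $M$ is nef and big; as $N$ is nef, $L_{ST}=M+N$ is again nef and big, and Lemma~\ref{captureC} yields the capture move.

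With this lemma the rest is bookkeeping on the hexagon, and in fact a single disjoint pair suffices. Fixing the opposite (hence disjoint) curves $C_1,C_4$, I would take the capture sequence $\cM=(C_2,C_3,C_5,C_6)$ and write $\cS_i$ for the complement in $\cG$ of its first $i$ entries. At each stage $\{C_1,C_4\}\subset\cS_i$, so capturing the $i$-th entry by the pair $\{C_1,C_4\}$ is a valid capture move by the lemma, and hence so is the move using all of $\cS_i$; the terminal set $\cS_4=\{C_1,C_4\}$ has two elements and is therefore a stopping criterion by Lemma~\ref{lem:easy}. Thus $D$ is capturable, and Theorem~\ref{thm:games} gives $b_{1,D}(\Cox(X))=0$, as required.

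The only point demanding genuine care is the capture-move lemma, where one must verify both nefness and bigness of $L_{ST}$ uniformly over all admissible $S,T,C$; everything else is forced by the hexagonal intersection pattern. In particular, since the last capture move draws its disjoint pair from the two-element stopping set, the ordering must leave two non-adjacent curves, and the choice of an opposite pair of the hexagon makes this automatic.
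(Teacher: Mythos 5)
Your proof is correct and follows essentially the same route as the paper: establish via Lemma~\ref{captureC} (Kawamata--Viehweg) that a disjoint pair captures any third exceptional curve because $-2K_X+N-S-T-C$ is nef and big on the hexagon, then order the captures so as to end at a two-element stopping set. The only (harmless) difference is that you verify the capture move without the hypothesis $C\cdot S+C\cdot T>0$ that the paper carries over from Lemma~\ref{parti4}, which lets you use the single opposite pair $\{C_1,C_4\}$ throughout instead of the paper's two-stage sequence.
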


\begin{proof}
Same criterion as in Lemma~\ref{parti4}.  Capture $L-E_1-E_2$ using $E_1,E_2$ and 
then capture the remaining curves using $L-E_1-E_3 , L-E_2-E_3$.
\end{proof}

\section{Quadratic generation} 
\label{genedue}

\subsection{Main result}
We collect all the information gathered in the previous sections to prove the main 
result of this paper: the ideal $I_X$ is generated by its degree two part.

\begin{proof}[Proof of the Batyrev--Popov conjecture]
Let $n$ be an integer; by induction on $r_X := 9 - \deg (X)$ and by induction on $n$ 
we show that $(J_X)_n = (I_X)_n$.  By definition, the statement is true if $n \leq 2$, for 
all del Pezzo surfaces $X$; if $r_X \leq 4$, then the result is well-known for all $n$.

Suppose that $r_X > 4$, $n \geq 3$, that for all del Pezzo surfaces $Y$ such that 
$r _Y < r _X$ we have that $J_Y=I_Y$, and that $(J_X)_{n-1} = (I_X)_{n-1}$.  
Let $D$ be a divisor on $X$ of anticanonical degree $n$; if there is an exceptional curve 
$E$ such that $D \cdot E \leq 0$, then the result follows by Lemma~\ref{negati} or 
by Lemma~\ref{tirati}.  Otherwise $D$ is ample and the result follows from 
Theorem~\ref{ganzo} if $\deg (X) = 1$, or from the lemmas of Section~\ref{Xgrado2} if 
$\deg (X) \geq 2$.
\end{proof}

\subsection{Quadratic generators} \label{ulti}

Let $X$ be a del Pezzo surface of degree $d$. We briefly explain how all generators 
of $I_{X}$ arise.  The nef divisors $D$ with anticanonical degree two on $X$ are:
\begin{enumerate}
\item conics $Q$, 
\item ${-K}_X$ if $d = 2$,
\item $-K_X + E$ if $d=1$, where $E$ is an exceptional curve on $X$,
\item $-2K_{X}$ ($d = 1$).
\end{enumerate}
These divisors are precisely sums of pairs of intersecting exceptional curves on $X$.

We count the relations coming from conics as follows. Every conic  $Q$ has $8 - d$ reducible sections and $\hX{0}{Q} = 2$; thus each conic gives rise to $6 - d$ quadratic relations. If there are $B$ conics on $X$ then there are $(6-d)B$ generators of $I_X$ induced by conic bundles. For example, when $d = 1$ we obtain $5\times 2,\!160 = 10,\!800$ relations.

When $d = 2$, we also have relations in degree $-K_{X}$. There are 28 monomials in $k[\cG]_{-K_X}$, and $\hX{0}{-K_X} = 3$, giving 25 linear dependence relations among these quadratic monomials. These relations yield the remaining generators of $I_{X}$. 

When $d=1$, there are $30$ monomials in $k[\cG]_{-K_X + E}$: $k_1e$, $k_2e$ and the $28$ monomials coming from the anticanonical divisor of the del Pezzo surface of degree two obtained by contracting $E$. This gives $(30 - 3)\times 240 = 6,\!480$ generators in $I_{X}$. Finally, we also have relations coming from $-2K_{X}$.  There are 123 monomials in $k[\cG]_{-2K_X}$: $k_1^2, k_1k_2$, $k_2^2$ and the $120$ monomials of the form $ee'$, where $E,E' \in \cC$ and $E + E' = -2K_X$. Since $\hX{0}{-2K_X} = 4$, we obtain 119 linear dependence relations among these quadratic monomials. 
These relations yield the remaining generators of $I_{X}$. This information is summarized in Table~\ref{Ta:relations}.

\begin{table}
\begin{center}
\caption{First Betti numbers $b_{1,D}(\Cox(X))$ for del Pezzo surfaces}
\label{Ta:relations}
\begin{tabular}{|c|c|c|c|c|c|}
\hline
$\deg(X)$ & $D$ & \begin{tabular}{c} Number of \\ monomials \\ in $k[\cG]_D$ \end{tabular} 
& $\hX{0}{D}$ & $b_{1,D}(\Cox(X))$ & 
\begin{tabular}{c} Number of \\ divisors \\ of type $D$ \end{tabular} \\
\hline 
\hline 
$1$ & $Q$ & $7$ & $2$ & $5$ & $2160$ \\
& $-K_{X} + E$ & $28+2$ & $3$ & $27$ & $240$ \\
& $-2K_X$ & $120+3$ & $4$ & $119$ & $1$ \\
 & (Total) & $22443$ &  & $17399$ & $2401$ \\
\hline
$2$ & $Q$ & $6$ & $2$ & $4$ & $126$ \\ 
& $-K_X$ & $28$ & $3$ & $25$ & $1$ \\
 & (Total) & $784$ &  & $529$ & $127$ \\
\hline
$3$ & $Q$ & $5$ & $2$ & $3$ & $27$ \\
 & (Total) & $135$ &  & $81$ & $27$ \\
\hline 
$4$ & $Q$ & $4$ & $2$ & $2$ & $10$ \\
 & (Total) & $40$ &  & $20$ & $10$ \\
\hline 
$5$ & $Q$ & $3$ & $2$ & $1$ & $5$ \\
 & (Total) & $15$ &  & $5$ & $5$ \\
\hline
\end{tabular}
\end{center}
\medskip
\end{table}

\begin{bibdiv}
\begin{biblist}

\bib{BP}{article}{
   author={Batyrev, Victor V.},
   author={Popov, Oleg N.},
   title={The Cox ring of a del Pezzo surface},
   conference={
      title={Arithmetic of higher-dimensional algebraic varieties (Palo
      Alto, CA, 2002)},
   },
   book={
      series={Progr. Math.},
      volume={226},
      publisher={Birkh\"auser Boston},
      place={Boston, MA},
   },
   date={2004},
   pages={85--103},
}

\bib{BCHM}{article}{
    AUTHOR = {Birkar, Caucher},
    author={Cascini, Paolo},
    author={Hacon, Christopher D.},
    author={McKernan, James},    
title={Existence of minimal models for varieties of log general type},
date={2008-8-14},
note={Preprint arXiv:0610203v2 [math.AG]}
}

\bib{Magma}{article}{
    AUTHOR = {Bosma, Wieb}, 
    author = {Cannon, John},
    author = {Playoust, Catherine},
     TITLE = {The {M}agma algebra system. {I}. {T}he user language},
   JOURNAL = {J. Symbolic Comput.},
    VOLUME = {24},
      YEAR = {1997},
    NUMBER = {3-4},
     PAGES = {235--265},
      ISSN = {0747-7171}
}

\bib{dlBB}{article}{
   author={de la Bret{\`e}che, R{\'e}gis},
   author={Browning, Tim D.},
   title={On Manin's conjecture for singular del Pezzo surfaces of degree 4.
   I},
   journal={Michigan Math. J.},
   volume={55},
   date={2007},
   number={1},
   pages={51--80},
   issn={0026-2285},
}

\bib{dlBBD}{article}{
   author={de la Bret{\`e}che, R{\'e}gis},
   author={Browning, Tim D.},
   author={Derenthal, Ulrich},
   title={On Manin's conjecture for a certain singular cubic surface},
   journal={Ann. Sci. \'Ecole Norm. Sup. (4)},
   volume={40},
   date={2007},
   number={1},
   pages={1--50},
   issn={0012-9593},
}

\bib{BRION}{article}{
   author={Brion, Michel},
   title={The total coordinate ring of a wonderful variety},
   journal={J.~Algebra},
   volume={313},
   date={2007},
   number={1},
   pages={61--99},
   issn={0021-8693},
}

\bib{CT}{article}{
   author={Castravet, Ana-Maria}, 
   author={Tevelev, Jenia},
   title={Hilbert's 14th problem and Cox rings},
   journal={ Compos.~Math.},
   volume={142},
   date={2006},
   number={6},
   pages={1479--1498},
   issn={0010-437X},
}

\bib{CTSI}{article}{
   author={Colliot-Th{\'e}l{\`e}ne, Jean-Louis},
   author={Sansuc, Jean-Jacques},
   title={La descente sur les vari\'et\'es rationnelles},
   conference={
      title={Journ\'ees de G\'eometrie Alg\'ebrique d'Angers, Juillet
      1979}
   },
   book={
      publisher={Sijthoff \& Noordhoff},
      place={Alphen aan den Rijn},
   },
   date={1980},
   pages={223--237},
}

\bib{CTSII}{article}{
   author={Colliot-Th{\'e}l{\`e}ne, Jean-Louis},
   author={Sansuc, Jean-Jacques},
   title={La descente sur les vari\'et\'es rationnelles. II},
   language={French},
   journal={Duke Math. J.},
   volume={54},
   date={1987},
   number={2},
   pages={375--492},
   issn={0012-7094},
}

\bib{CTSSD}{article}{
   author={Colliot-Th{\'e}l{\`e}ne, Jean-Louis},
   author={Sansuc, Jean-Jacques},
   author={Swinnerton-Dyer, Peter},
   title={Intersections de deux quadriques et surfaces de Ch\^atelet},
   journal={C. R. Acad. Sci. Paris S\'er. I Math.},
   volume={298},
   date={1984},
   number={16},
   pages={377--380},
   issn={0249-6291},
}

\bib{Cox}{article}{
   author={Cox, David A.},
   title={The homogeneous coordinate ring of a toric variety},
   journal={J. Algebraic Geom.},
   volume={4},
   date={1995},
   number={1},
   pages={17--50},
   issn={1056-3911},
}

\bib{CO}{article}{
   author={Cragnolini, Paolo},
   author={Oliverio, Paolo A.},
   title={Lines on del Pezzo surfaces with $K\sp 2\sb S=1$ in characteristic
   2 in the smooth case},
   journal={Portugal. Math.},
   volume={57},
   date={2000},
   number={1},
   pages={59--95},
   issn={0032-5155},
}

\bib{De}{book}{
   author={Debarre, Olivier},
   title={Higher-dimensional algebraic geometry},
   series={Universitext},
   publisher={Springer-Verlag},
   place={New York},
   date={2001},
   pages={xiv+233},
   isbn={0-387-95227-6},
}

\bib{Derenthal}{article}{
   author={Derenthal, Ulrich},
   title={Universal torsors of del Pezzo surfaces and homogeneous spaces},
   journal={Adv. Math.},
   volume={213},
   date={2007},
   number={2},
   pages={849--864},
   issn={0001-8708},
}

\bib{EKW}{article}
{
author={Elizondo, E. Javier},
author={Kazuhiko, Kurano},
author={Watanabe, Kei-Ichi},
title={The total coordinate ring of a normal projective variety},
journal={J. Algebra} ,
date={2004},
number={276(2)},
pages={625-637},
}

\bib{Hartshorne}{book}{
   author={Hartshorne, Robin},
   title={Algebraic geometry},
   note={Graduate Texts in Mathematics, No. 52},
   publisher={Springer-Verlag},
   place={New York},
   date={1977},
   pages={xvi+496},
   isbn={0-387-90244-9},
}

\bib{Hassett}{article}{
   author={Hassett, Brendan},
   title={Equations of universal torsors and Cox rings},
   conference={
      title={Mathematisches Institut, Georg-August-Universit\"at
      G\"ottingen: Seminars Summer Term 2004},
   },
   book={
      publisher={Universit\"atsdrucke G\"ottingen, G\"ottingen},
   },
   date={2004},
   pages={135--143},
}

\bib{HT}{article}{
   author={Hassett, Brendan},
   author={Tschinkel, Yuri},
   title={Universal torsors and Cox rings},
   conference={
      title={Arithmetic of higher-dimensional algebraic varieties (Palo
      Alto, CA, 2002)},
   },
   book={
      series={Progr. Math.},
      volume={226},
      publisher={Birkh\"auser Boston},
      place={Boston, MA},
   },
   date={2004},
   pages={149--173},
}

\bib{HefezKleiman}{article}{
   author={Hefez, Abramo},
   author={Kleiman, Steven L.},
   title={Notes on the duality of projective varieties},
   conference={
      title={Geometry today},
      address={Rome},
      date={1984},
   },
   book={
      series={Progr. Math.},
      volume={60},
      publisher={Birkh\"auser Boston},
      place={Boston, MA},
   },
   date={1985},
   pages={143--183},
}

\bib{HK}{article}{
   author={Hu, Yi},
   author={Keel, Se\'an},
   title={Mori dream spaces and GIT},
   journal={Michigan Math. J.},
   volume={48},
   date={2000},
   pages={331--348},
   issn={0026-2285},
}

\bib{Kaji}{article}{
   author={Kaji, Hajime},
   title={On the inseparable degrees of the Gauss map and the projection of
   the conormal variety to the dual of higher order for space curves},
   journal={Math. Ann.},
   volume={292},
   date={1992},
   number={3},
   pages={529--532},
   issn={0025-5831},
}

\bib{Ko}{book}{
    author = {Koll{\'a}r, J{\'a}nos},
     title = {Rational curves on algebraic varieties},
    series = {Ergebnisse der Mathematik und ihrer Grenzgebiete},
    volume = {32},
 publisher = {Springer-Verlag},
   address = {Berlin},
      year = {1996},
     pages = {viii+320},
      isbn = {3-540-60168-6},
   MRCLASS = {14-02 (14C05 14E05 14F17 14J45)},
}

\bib{LV}{article}{
author= {Laface, Antonio},
author={Velasco, Mauricio},
title={Picard-graded Betti numbers and the defining ideals of Cox rings},
journal={Journal of Algebra},
volume={322},
number={2},
date={2009},
pages={353--372}
}

\bib{M}{book}{
    AUTHOR = {Manin, Yu. I.},
     TITLE = {Cubic forms: algebra, geometry, arithmetic},
 PUBLISHER = {North-Holland Publishing Co.},
   ADDRESS = {Amsterdam},
      YEAR = {1974},
     PAGES = {vii+292},
      ISBN = {0-7204-2456-9},
}

\bib{Peyre}{article}{
   author={Peyre, Emmanuel},
   title={Counting points on varieties using universal torsors},
   conference={
      title={Arithmetic of higher-dimensional algebraic varieties (Palo
      Alto, CA, 2002)},
   },
   book={
      series={Progr. Math.},
      volume={226},
      publisher={Birkh\"auser Boston},
      place={Boston, MA},
   },
   date={2004},
   pages={61--81},
}

\bib{Popov}{article}{
    AUTHOR = {Popov, Oleg N.},
title={The Cox Ring of a Del Pezzo Surface Has Rational Singularities},
date={2004-2-10},
note={Preprint arXiv:math/0402154v1 [math.AG]}, 
}

\bib{Reid}{article}{
   author={Reid, Miles},
   title={Chapters on Algebraic Surfaces},
   conference={
      title={Complex Algebraic Geometry},
   },
   book={
      series={IAS/Park City Mathematics Series},
      volume={3},
      publisher={American Mathematical Society},
      place={Boston, MA},
   },
   date={1997},
   pages={1--160},
}

\bib{Sal98}{article}{
   author={Salberger, Per},
   title={Tamagawa measures on universal torsors and points of bounded
   height on Fano varieties},
   note={Nombre et r\'epartition de points de hauteur born\'ee (Paris,
   1996)},
   journal={Ast\'erisque},
   number={251},
   date={1998},
   pages={91--258},
   issn={0303-1179},
}

\bib{SS}{article}{
   author={Serganova, Vera},
   author={Skorobogatov, Alexei},
   title={Del Pezzo surfaces and representation theory},
   journal={Algebra and Number Theory},
   volume={1},
   date={2007},
   pages={393--419},
}

\bib{SS2}{article}{
   author={Serganova, Vera},
   author={Skorobogatov, Alexei},
    title={On the equations for universal torsors over del Pezzo surfaces},
date={2008-6-1},
note={Preprint arXiv:0806.0089 [math.AG]}, 
}

\bib{STV}{article}{
   author={Stillman, Mike},
   author={Testa, Damiano},
   author={Velasco, Mauricio},
   title={Gr\"obner bases, monomial group actions, and the Cox rings of del
   Pezzo surfaces},
   journal={J. Algebra},
   volume={316},
   date={2007},
   number={2},
   pages={777--801},
   issn={0021-8693},
}

\bib{SX}{article}{
    AUTHOR = {Sturmfels, Bernd},
    AUTHOR = {Xu, Zhiqiang},
    title={Sagbi Bases of Cox-Nagata rings},
note={to appear in Journal of the European Mathematical Society.},
}

\bib{Te}{article}{
author={Terakawa, Hiroyuki},
title={On the Kawamata-Viehweg vanishing theorem for a surface in positive characteristic.} ,
journal={Arch. Math. (Basel)},
volume={71},
date={1998},
number={5},
pages={370--375}, 
}

\bib{big}{article}{
   author={Testa, Damiano},
   author={V\'arilly-Alvarado, Anthony},
   author={Velasco, Mauricio},
   title={Big rational surfaces},
   date={2009-4-18},
   note={Preprint arXiv:0901.1094v2 [math.AG]},
}

\bib{Xie}{article}{
    AUTHOR = {Xie, Qihong},
    title={Kawamata-Viehweg Vanishing on Rational Surfaces in Positive Characteristic},
date={2008-1-8},
note={Preprint arXiv:0710.2706v5 [math.AG]}, 
}
\end{biblist}
\end{bibdiv}

\end{document}